\documentclass[12pt,reqno]{amsart}
\usepackage[margin=1in]{geometry}
\usepackage{amsmath,amssymb,amsthm,graphicx,amsxtra, setspace}
\usepackage[utf8]{inputenc}
\usepackage{mathrsfs}
\usepackage{hyperref}
\usepackage{upgreek}
\usepackage{mathtools}
\usepackage[dvipsnames]{xcolor}
\usepackage[mathcal]{euscript}
\usepackage{amsmath,units}
\usepackage{pgfplots}
\usepackage{tikz}
\usepackage{verbatim}
\allowdisplaybreaks
\usepackage{dsfont}
\usepackage{fontenc}
\usepackage{textcomp}
\usepackage{marvosym}
\usepackage{eurosym}
\usepackage{upgreek}
\usepackage[pagewise]{lineno}

\DeclareMathAlphabet{\mathpzc}{OT1}{pzc}{m}{it}

\usepackage[cyr]{aeguill}

\colorlet{darkblue}{blue!50!black}

\hypersetup{
	colorlinks,%
	citecolor=blue,%
	filecolor=red,%
	linkcolor=red,%
	urlcolor=blue,%
	pdfnewwindow=true,%
	pdfstartview={FitH}
}

\newtheorem{theorem}{Theorem}[section]
\newtheorem{lemma}[theorem]{Lemma}
\newtheorem{proposition}[theorem]{Proposition}

\newtheorem{definition}[theorem]{Definition}

\newtheorem{remark}[theorem]{Remark}

\let\originalleft\left
\let\originalright\right
\renewcommand{\left}{\mathopen{}\mathclose\bgroup\originalleft}
\renewcommand{\right}{\aftergroup\egroup\originalright}


\renewcommand{\d}{\/\mathrm{d}\/}

\def\w{\textbf{W}^{\varepsilon}_{{\theta}^{\varepsilon}}}

\def\var{\varepsilon}

\def\L{\mathbb{L}}
\def\A{\mathrm{A}}
\def\I{\mathrm{I}}

\def\C{\mathrm{C}}
\def\f{\boldsymbol{f}}

\def\B{\mathrm{B}}
\def\D{\mathrm{D}}
\def\y{\boldsymbol{y}}

\def\X{\mathbb{X}}
\def\x{\boldsymbol{x}}

\def\z{\boldsymbol{z}}
\def\v{\boldsymbol{v}}
\def\w{\boldsymbol{w}}
\def\W{\mathrm{W}}
\def\G{\mathrm{G}}

\def\N{\mathbb{N}}

\def\V{\mathbb{V}}
\def\wi{\widetilde}

\def\u{\mathrm{U}}
\def\P{\mathrm{P}}
\def\u{\boldsymbol{u}}
\def\H{\mathbb{H}}
\def\n{\boldsymbol{n}}

\newcommand{\R}{\mathbb{R}}

\renewcommand{\d}{\/\mathrm{d}\/}


\newcommand{\Addresses}{{
		\footnote{
			
			\noindent \textsuperscript{1}Department of Mathematics, Indian Institute of Technology Roorkee-IIT Roorkee,
			Haridwar Highway, Roorkee, Uttarakhand 247667, INDIA.\par\nopagebreak
			\noindent  \textit{e-mail:} \texttt{maniltmohan@ma.iitr.ac.in, maniltmohan@gmail.com.}
			
			\noindent \textsuperscript{*}Corresponding author.

			\textit{Key words:} Brinkman--Forchheimer equations, weak solution, strong solution, monotonicity. 
			
			Mathematics Subject Classification (2010): 37L30, 35Q35, 35Q30, 35B40.

}}}

\begin{document}
	
	
	\title[On the convective Brinkman--Forchheimer equations]{On the convective Brinkman--Forchheimer equations
			\Addresses}
	\author[S. Gautam and M. T. Mohan ]{Sagar Gautam\textsuperscript{1} and Manil T. Mohan\textsuperscript{1*}}

	\maketitle
	
	\begin{abstract}
	The  convective Brinkman--Forchheimer  equations or the Navier--Stokes equations with damping in  bounded or  periodic domains $\subset\mathbb{R}^d$, $2\leq d\leq 4$ are considered in this work.	The existence and uniqueness of a global weak solution in the Leray-Hopf sense satisfying the energy equality to the system: 	$$\partial_t\boldsymbol{u}-\mu \Delta\boldsymbol{u}+(\boldsymbol{u}\cdot\nabla)\boldsymbol{u}+\alpha\boldsymbol{u}+\beta|\boldsymbol{u}|^{r-1}\boldsymbol{u}+\nabla p=\boldsymbol{f},\ \nabla\cdot\boldsymbol{u}=0,$$	 (for all values of $\beta>0$ and $\mu>0$, whenever the absorption exponent $r>3$ and  $2\beta\mu \geq 1$, for the critical case $r=3$) is proved. We exploit the monotonicity as well as the demicontinuity properties of the linear and nonlinear operators and the Minty-Browder technique in the proofs.  Finally, we discuss the existence of global-in-time strong solutions to such systems in periodic domains. 
	\end{abstract}

	\section{Introduction}\label{sec1}\setcounter{equation}{0}
	The existence and uniqueness of global in time strong solutions of the classical  3D incompressible  Navier--Stokes equations (NSE) 
	$$\partial_t\u-\nu \Delta\u+(\u\cdot\nabla)\u+\nabla p=\f,\ \nabla\cdot\u=0,$$
	 is one of the biggest open problems in Mathematics (see \cite{FMRT,GGP,OAL,JCR3,Te,Te1}, etc.). In the recent years, several mathematicians came forward with some modifications of the classical 3D NSE and posed problems on global solvability of such models.  The Cauchy problem for the Navier--Stokes equations with damping $r|\u|^{r-1}\u$ in the whole space is considered in the work \cite{ZCQJ}, the authors showed that it has global weak solutions, for any $r\geq 1$. The existence and uniqueness of a smooth solution to a tamed 3D NSE in the whole space is established in the work  \cite{MRXZ}. Moreover, the authors  showed that if there exists a bounded smooth solution to the classical 3D NSE, then this solution satisfies the tamed equation.  Whereas, the bounded domains are concerned, the authors in \cite{SNA} considered the Navier--Stokes problem in bounded domains with compact boundary, modified by the absorption term $|\u|^{r-2}\u$, for $r>2$. For this modified problem, they proved the existence of weak solutions in the Leray-Hopf sense, for any dimension $d\geq 2$ and their uniqueness only for $d=2$. Furthermore,  in three dimensions, they were not able to establish the energy equality satisfied by the weak solutions and the authors in \cite{KWH,CLF} resolved this issue recently.

This work is concerned about the convective Brinkman--Forchheimer equations in bounded or periodic domains. The model described below is formulated on bounded domains, but one can formulate the same problem in periodic domains also (cf. \cite{KWH}). Let $\mathcal{O}\subset\R^d$ ($2\leq d\leq 4$) be a bounded domain with a smooth boundary $\partial\mathcal{O}$ (at least $\C^2-$boundary). The	convective Brinkman--Forchheimer (CBF)  equations are given by (see \cite{KT2} for Brinkman--Forchheimer equations with fast growing nonlinearities)
\begin{equation}\label{1}
\left\{
\begin{aligned}
\frac{\partial \u}{\partial t}-\mu \Delta\u+(\u\cdot\nabla)\u+\alpha\u+\beta|\u|^{r-1}\u+\nabla p&=\boldsymbol{f}, \ \text{ in } \ \mathcal{O}\times(0,T), \\ \nabla\cdot\u&=0, \ \text{ in } \ \mathcal{O}\times(0,T), \\
\u&=\mathbf{0}\ \text{ on } \ \partial\mathcal{O}\times(0,T), \\
\u(0)&=\u_0 \ \text{ in } \ \mathcal{O}.
\end{aligned}
\right.
\end{equation}
The convective Brinkman--Forchheimer equations \eqref{1} describe the motion of incompressible fluid flows in a saturated porous medium.  In the above system, $\u(x,t)\in\R^d$ represents the velocity field at time $t$ and position $x$, $p(x,t)\in\R$ denotes the pressure, $\f(x,t)\in\R^d$ is an external forcing. For the uniqueness of the pressure $p$, one can impose the condition $\int_{\mathcal{O}}p(x,t)\d x=0 \text{ in }  (0,T)$. The constant $\mu$ represents the positive Brinkman coefficient (effective viscosity), the positive constants $\alpha$ and $\beta$ represent the Darcy (permeability of porous medium) and Forchheimer (proportional to the porosity of the material) coefficients, respectively. The absorption exponent $r\in[1,\infty)$ and the exponent $r=3$ is known as the critical exponent.  It can be easily seen that for $\alpha=\beta=0$, we obtain the classical 3D Navier--Stokes equations. Our aim in this article is to prove the existence and uniqueness of global weak as well as strong solutions to the system \eqref{1} for $r\geq 3$ in bounded or periodic domains. We establish a monotonicity property of the linear and nonlinear operators, and make use of the well-known Minty-Browder technique to obtain the existence of global weak solutions in the Leray-Hopf sense satisfying the energy equality  
\begin{align}
	\label{1.2}\u\in\C([0,T];\H)\cap\mathrm{L}^2(0,T;\V)\cap\mathrm{L}^{r+1}(0,T;\wi{\L}^{r+1}),
\end{align}
to the system \eqref{1}, for $\u_0\in\H$ and $\f\in\mathrm{L}^2(0,T;\V')$ (see section \ref{sec2} for function spaces).  Stochastic versions of such methods are well-known in the solvability of stochastic partial differential equations arising in Mathematical Physics (see \cite{ICAM,MJSS,MTM}, etc. and references therein) and this is our main motivation to devise such a method for CBF equations. The stochastic counterpart of the problem \eqref{1} has been already done in \cite{MT4} (see also \cite{MT2,MTM11,MTM12,MTM14}). In periodic domains, we obtain the following regularity of global strong solutions  
\begin{align}\label{1.3}
	\u\in\C([0,T];\V)\cap\mathrm{L}^{\infty}(0,T;\wi{\L}^{r+1})\cap\mathrm{L}^2(0,T;\D(\A))\cap\mathrm{L}^{r+1}(0,T;\wi{\L}^{3(r+1)}),
\end{align} 
for $\u_0\in\V\cap\wi{\L}^{r+1}$ and $\f\in\mathrm{L}^2(0,T;\H)$. For $\u_0\in\D(\A)$ and $\f\in\mathrm{W}^{1,1}(0,T;\H)$, we establish that the  system \eqref{1} possesses unique strong solution with 
\begin{align}\label{1.4}
	\u\in\mathrm{W}^{1,\infty}([0,T];\H)\cap\mathrm{L}^{\infty}(0,T;\D(\A)). 
\end{align}
We made use of the abstract theory available in \cite{VB1,VB}, etc. to arrive at such a result. 
 In bounded domains, due to technical difficulties described below, we are not able to obtain the regularity of weak solutions in the above class. 

Now, we discuss some of the global solvability results available in the literature for the 3D CBF equations and related models in the whole space as well as in periodic domains. The Cauchy problem corresponding to \eqref{1} in the whole space (with $\alpha=0$ and $\beta=r$) is considered in \cite{ZCQJ}. The authors showed that the system has global weak solutions, for any $r\geq 1$, global strong solutions, for any $r\geq 7/2$ and that the strong solution is unique, for any $7/2\leq r\leq 5$. An improvement of this result was made in \cite{ZZXW}. The authors showed that the above-mentioned problem possesses global strong solutions, for any $r>3$ and the strong solution is unique, when $3<r\leq 5$. Later, the authors in \cite{YZ}  proved that the strong solution exists globally for $r\geq 3$, and they established two regularity criteria, for $1\leq r<3$. Moreover, for any $r\geq 1$, they proved that the strong solution is unique even among weak solutions.  In \cite{KWH}, the authors obtained a simple proof of the existence of global-in-time smooth solutions for the CBF equations \eqref{1} with $r>3$ on a 3D periodic domain. For the critical value $r=3$, they proved that the unique global, regular solutions exist, provided that the coefficients satisfy the relation $4\beta\mu\geq 1$. Furthermore, they resolved the issue of establishing the energy equality satisfied by the weak solutions  in the critical case. On the 3D torus, they were able to approximate functions in $\L^p$-spaces using truncated Fourier expansions to get such a result. Recently, the authors in \cite{KWH1} showed that the strong solutions of 3D CBF  equations in periodic domains with  the absorption exponent $r\in[1,3]$ remain strong under small enough changes of initial condition and forcing function. Moreover, they also established the weak-strong uniqueness of the torus.

In the bounded domains, the existence of a global weak solution to the system \eqref{1} is established in \cite{SNA}. Unlike the whole space or periodic domains, in bounded domains, there is a technical difficulty for obtaining strong solutions to \eqref{1} with the regularity given in \eqref{1.3} for the velocity field $\u(\cdot)$.  As mentioned  in the paper \cite{KT2}, the major difficulty in working with bounded domains is that $\mathcal{P}(|\u|^{r-1}\u)$ (here $\mathcal{P}:\L^2(\mathcal{O})\to\H$ is the Helmholtz-Hodge orthogonal projection) need not be zero on the boundary, and $\mathcal{P}$ and $-\Delta$ are not necessarily commuting (for a counter example, see \cite[Example 2.19, Chapter 2, pp. 57]{JCR4}). Moreover, $\Delta\u\big|_{\partial\mathcal{O}}\neq 0$ in general and the term with pressure will not disappear (see \cite{KT2}), while taking the inner product of the first equation in \eqref{1} with $\Delta\u$. Therefore, the equality (\cite{KWH})
\begin{align}\label{3}
&\int_{\mathcal{O}}(-\Delta\u(x))\cdot|\u(x)|^{r-1}\u(x)\d x\nonumber\\&=\int_{\mathcal{O}}|\nabla\u(x)|^2|\u(x)|^{r-1}\d x+4\left[\frac{r-1}{(r+1)^2}\right]\int_{\mathcal{O}}|\nabla|\u(x)|^{\frac{r+1}{2}}|^2\d x\nonumber\\&=\int_{\mathcal{O}}|\nabla\u(x)|^2|\u(x)|^{r-1}\d x+\frac{r-1}{4}\int_{\mathcal{O}}|\u(x)|^{r-3}|\nabla|\u(x)|^2|^2\d x,
\end{align}
may not be useful in the context of bounded domains. The authors in \cite{KT2} considered the Brinkman--Forchheimer equations with fast growing nonlinearities and they showed the existence of regular dissipative solutions and global attractors for the system \eqref{1} with $r> 3$. The existence of a global smooth solution assures that the energy equality is satisfied by the weak solutions in bounded domains. But the critical case of $r=3$ remained open until it is resolved by the authors in \cite{CLF}. They were able to construct functions that can approximate functions defined on smooth bounded domains by elements of eigenspaces of the Stokes operator in such a way that the approximations are bounded and converge in both Sobolev and Lebesgue spaces simultaneously. As a simple application of this result, they proved that all weak solutions of the critical CBF equations ($r=3$) posed on a bounded domain in $\mathbb{R}^3$ satisfy the energy equality. 

The rest of the paper is organized as follows. In the next section, we discuss the functional setting of the problem described in \eqref{1}. After defining necessary function spaces, we define the linear and nonlinear operators and show that these operators satisfy a monotonicity property for $r\geq 3$ (see Theorems \ref{thm2.2} and \ref{thm2.3}). Furthermore, we show that the sum of linear and nonlinear operators satisfy demicontinuous property also (Lemma \ref{lem2.8}). After providing an abstract formulation of the system \eqref{1}, the existence and uniqueness of a global weak solution in the Leray-Hopf sense is examined in Section \ref{sec3}. The monotonicity and hemicontinuity properties of the linear and nonlinear operators as well as the Minty-Browder techniques are exploited in the proofs (Theorem \ref{main}). In the final section, we discuss the global strong solutions to the system \eqref{1}. Due to technical difficulties described above, we are able to prove the regularity of the weak solutions in the class \eqref{1.3} in periodic domains only (Theorem \ref{reg}). Moreover, using the abstract theory developed in \cite{VB1,VB}, we prove the existence and uniqueness of global strong solutions for the system \eqref{1} in the class \eqref{1.3} (Theorem \ref{thm4.2}).

\section{Mathematical Formulation}\label{sec2}\setcounter{equation}{0}
In this section, we provide the necessary function spaces needed to obtain the global solvability results of the system \eqref{1}. After defining linear and nonlinear operators, we prove the properties of these operators like monotonicity and  hemicontinuity.   In our analysis, the parameter $\alpha$ does not play a major role and we set $\alpha$ to be zero in \eqref{1} in the entire paper.

\subsection{Function spaces} Let $\C_0^{\infty}(\mathcal{O};\R^d)$ be the space of all infinitely differentiable functions  ($\R^d$-valued) with compact support in $\mathcal{O}\subset\R^d$.  Let us define 
\begin{align*} 
\mathcal{V}&:=\{\u\in\C_0^{\infty}(\mathcal{O},\R^d):\nabla\cdot\u=0\},\\
\mathbb{H}&:=\text{the closure of }\ \mathcal{V} \ \text{ in the Lebesgue space } \L^2(\mathcal{O})=\mathrm{L}^2(\mathcal{O};\R^d),\\
\mathbb{V}&:=\text{the closure of }\ \mathcal{V} \ \text{ in the Sobolev space } \H_0^1(\mathcal{O})=\mathrm{H}_0^1(\mathcal{O};\R^d),\\
\widetilde{\L}^{p}&:=\text{the closure of }\ \mathcal{V} \ \text{ in the Lebesgue space } \L^p(\mathcal{O})=\mathrm{L}^p(\mathcal{O};\R^d),
\end{align*}
for $p\in(2,\infty)$. The space of divergence-free test functions in the space-time domain is defined by 
\begin{align}\label{dvtest}
	\mathcal{V}_T:=\left\{\v\in\C_0^{\infty}(\mathcal{O}\times[0,T);\R^d):\nabla\cdot\v(\cdot,t)=0\right\}.
\end{align} 
It should be noted that $\v(x,T)=0$, for all $\v\in\mathcal{V}_T$. Then under some smoothness assumptions on the boundary, we characterize the spaces $\H$, $\V$ and $\widetilde{\L}^p$ as 
$
\H=\{\u\in\L^2(\mathcal{O}):\nabla\cdot\u=0,\u\cdot\mathbf{n}\big|_{\partial\mathcal{O}}=0\}$,  with norm  $\|\u\|_{\H}^2:=\int_{\mathcal{O}}|\u(x)|^2\d x,
$
where $\mathbf{n}$ is the outward normal to $\partial\mathcal{O}$,
$
\V=\{\u\in\H_0^1(\mathcal{O}):\nabla\cdot\u=0\},$  with norm $ \|\u\|_{\V}^2:=\int_{\mathcal{O}}|\nabla\u(x)|^2\d x,
$ and $\widetilde{\L}^p=\{\u\in\L^p(\mathcal{O}):\nabla\cdot\u=0, \u\cdot\mathbf{n}\big|_{\partial\mathcal{O}}=0\},$ with norm $\|\u\|_{\widetilde{\L}^p}^p=\int_{\mathcal{O}}|\u(x)|^p\d x$, respectively.
Let $(\cdot,\cdot)$ denote the inner product in the Hilbert space $\H$ and $\langle \cdot,\cdot\rangle $ denotes the induced duality between the spaces $\V$  and its dual $\V'$ as well as $\widetilde{\L}^p$ and its dual $\widetilde{\L}^{p'}$, where $\frac{1}{p}+\frac{1}{p'}=1$. Note that $\H$ can be identified with its dual $\H'$. From \cite[Subsection 2.1]{FKS}, we have that the sum space $\V'+\widetilde{\L}^{p'}$ is well defined and  is a Banach space with respect to the norm 
\begin{align}\label{22}
	\|\u\|_{\V'+\widetilde{\L}^{p'}}&:=\inf\{\|\u_1\|_{\V'}+\|\u_2\|_{\wi\L^{p'}}:\u=\u_1+\u_2, \y_1\in\V' \ \text{and} \ \y_2\in\wi\L^{p'}\}\nonumber\\&=
	\sup\left\{\frac{|\langle\u_1+\u_2,\f\rangle|}{\|\f\|_{\V\cap\widetilde{\L}^p}}:\boldsymbol{0}\neq\f\in\V\cap\widetilde{\L}^p\right\},
\end{align}
where $\|\cdot\|_{\V\cap\widetilde{\L}^p}:=\max\{\|\cdot\|_{\V}, \|\cdot\|_{\wi\L^p}\}$ is a norm on the Banach space $\V\cap\widetilde{\L}^p$. Also the norm $\max\{\|\u\|_{\V}, \|\u\|_{\wi\L^p}\}$ is equivalent to the norms  $\|\u\|_{\V}+\|\u\|_{\widetilde{\L}^{p}}$ and $\sqrt{\|\u\|_{\V}^2+\|\u\|_{\widetilde{\L}^{p}}^2}$ on the space $\V\cap\widetilde{\L}^p$. Moreover, we have the continuous embeddings $$\V\cap\widetilde{\L}^p\hookrightarrow\V\hookrightarrow\H\cong\H'\hookrightarrow\V'\hookrightarrow\V'+\widetilde{\L}^{p'},$$ where the embedding $\V\hookrightarrow\H$ is compact. We denote $\H^2(\mathcal{O})=\W^{2,2}(\mathcal{O};\R^d)$ for the second order Hilbertian-Sobolev spaces. For the functional set up in periodic domains, interested readers are referred to see \cite{KWH,Te1}, etc. The following Gr\"onwall inequality is used in the sequel and has been taken from \cite{JRRY}.
\begin{lemma}[Gr\"onwall inequality]\label{Gronw}
	Suppose ${y},{f},{f}_1$ and ${f}_2$ are four non-negative locally integrable functions on $[t_0,\infty)$, $t_0>0$, satisfying
	\begin{align*}
		{y}(t)+\int_{t_0}^t{f}(s)\d s\leq {a}+\int_{t_0}^t {f}_1(s)\d s+\int_{t_0}^{t} {f}_2(s){y}(s)\d s, \ \text{ for all } \ t\in[t_0,\infty),
	\end{align*} 
	where ${a}$ is some non-negative constant. Then, 
	\begin{align*}
		{y}(t)+\int_{t_0}^t{f}(s)\d s\leq 
		\left({a}+\int_{t_0}^t {f}_1(s)\d s\right)\exp\left(\int_{t_0}^{t} {f}_2(s)\d s\right),
	\end{align*}
	for all $t\in[t_0,\infty)$.
\end{lemma}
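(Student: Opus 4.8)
The plan is to reduce the integral inequality to the classical differential form of Gr\"onwall's inequality by means of an integrating factor. Since $f$ is non-negative, the hypothesis immediately yields the weaker bound
\begin{align*}
{y}(t)\leq {a}+\int_{t_0}^t{f}_1(s)\d s+\int_{t_0}^{t}{f}_2(s){y}(s)\d s,
\end{align*}
and I would set $G(t):={a}+\int_{t_0}^t{f}_1(s)\d s$, which is non-negative and, because ${f}_1\geq 0$, non-decreasing in $t$. Writing $R(t):=\int_{t_0}^t{f}_2(s){y}(s)\d s$, the local integrability of ${f}_2{y}$ (implicit in the hypothesis, as this product already appears under an integral sign) guarantees that $R$ is absolutely continuous with $R(t_0)=0$ and $R'(t)={f}_2(t){y}(t)$ for almost every $t$. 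The displayed bound then reads ${y}(t)\leq G(t)+R(t)$, so that $R'={f}_2{y}\leq {f}_2(G+R)$ almost everywhere.

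Next I would rewrite this as the differential inequality $R'(t)-{f}_2(t)R(t)\leq {f}_2(t)G(t)$ and multiply by the integrating factor $\exp\left(-\int_{t_0}^t{f}_2(s)\d s\right)$, recognizing the left-hand side as the a.e.\ derivative of $R(t)\exp\left(-\int_{t_0}^t{f}_2(s)\d s\right)$. Integrating from $t_0$ to $t$ and using $R(t_0)=0$ gives
\begin{align*}
R(t)\exp\left(-\int_{t_0}^t{f}_2(s)\d s\right)\leq\int_{t_0}^t{f}_2(\sigma)G(\sigma)\exp\left(-\int_{t_0}^{\sigma}{f}_2(s)\d s\right)\d\sigma.
\end{align*}
Because $G$ is non-decreasing, I can bound $G(\sigma)\leq G(t)$ for $\sigma\leq t$ and then evaluate the remaining integral explicitly, since ${f}_2(\sigma)\exp\left(-\int_{t_0}^{\sigma}{f}_2(s)\d s\right)$ is precisely minus the derivative of $\exp\left(-\int_{t_0}^{\sigma}{f}_2(s)\d s\right)$. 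This produces the estimate $R(t)\leq G(t)\left(\exp\left(\int_{t_0}^t{f}_2(s)\d s\right)-1\right)$.

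Finally, I would feed this estimate back into the \emph{original} hypothesis (not merely the weakened one), obtaining
\begin{align*}
{y}(t)+\int_{t_0}^t{f}(s)\d s\leq G(t)+R(t)\leq G(t)\exp\left(\int_{t_0}^t{f}_2(s)\d s\right),
\end{align*}
which is exactly the claimed conclusion once the definition of $G$ is substituted back. The only delicate point is the regularity required to pass from the integral inequality to the differential one: I must argue that $R$ is absolutely continuous and invoke the fundamental theorem of calculus in the almost-everywhere sense, which is legitimate as soon as ${f}_2{y}\in\mathrm{L}^1_{\mathrm{loc}}$. Everything else is a routine integrating-factor computation, and no structural feature of the CBF system \eqref{1} enters — this is a purely analytic lemma invoked for the a priori estimates in the sequel.
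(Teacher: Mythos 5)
Your proof is correct. For the comparison you should know that the paper itself offers no argument for this lemma: it is quoted directly from the reference \cite{JRRY}, so your write-up is, in effect, supplying a proof the paper omits. Your integrating-factor argument is the standard one, and it handles correctly the three points at which the stated form of the conclusion could be lost. First, you drop the non-negative term $\int_{t_0}^t f(s)\d s$ only while deriving the differential inequality for $R(t)=\int_{t_0}^t f_2(s)y(s)\d s$, and you return to the \emph{original} hypothesis at the end; this is exactly what keeps $\int_{t_0}^t f$ on the left-hand side of the conclusion, a detail many sketches get wrong by proving the bound only for $y$. Second, you use that $G(t)=a+\int_{t_0}^t f_1(s)\d s$ is non-decreasing (because $f_1\geq 0$) to replace $G(\sigma)$ by $G(t)$ under the integral, which is what yields the clean product form $G(t)\exp\left(\int_{t_0}^t f_2(s)\d s\right)$ rather than a convolution-type bound; without this monotonicity the stated conclusion would not follow in this form. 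Third, your regularity remark is the right one: $R$ and the exponential factor are absolutely continuous, so the a.e.\ product rule and the fundamental theorem of calculus apply, and the explicit evaluation $\int_{t_0}^t f_2(\sigma)\exp\left(-\int_{t_0}^{\sigma}f_2(s)\d s\right)\d\sigma=1-\exp\left(-\int_{t_0}^{t}f_2(s)\d s\right)$ is legitimate. Your reading that $f_2\,y\in\mathrm{L}^1_{\mathrm{loc}}$ is implicit in the hypothesis is also necessary, not merely convenient: if one allowed $\int_{t_0}^t f_2(s)y(s)\d s=+\infty$, the hypothesis would become vacuous past the blow-up time and the conclusion could fail, so the lemma is only true under that implicit finiteness assumption, which is how the cited source (and the paper's subsequent applications, where all such products are manifestly integrable) use it.
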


The following Gr\"onwall inequality is a nonlinear generalization of Lemma \ref{Gronw} and has been taken from \cite[Theorem 21, Chapter 1, pp. 11]{SSD}.
\begin{lemma}[Gr\"onwall inequality: A nonlinear generalization]\label{lem-non-gro}
	Let $y$ be a non-negative function that satisfies the following  integral inequality:
	\begin{align}
     y(t)\leq c+\int_{t_0}^t(a(s)y(s)+b(s)y^{\alpha}(s))\d s,  \ \text{ for all } \ t\in[t_0,\infty), 
		\end{align}
		where $c$ is some non-negative constant, $\ \alpha\in[0,1)$ and, $a(t)$ and $b(t)$ are  non-negative locally integrable  functions in $[t_0,\infty)$. Then, we have 
		\begin{align}
			y(t)\leq & \left\{c^{1-\alpha}\exp\left[(1-\alpha)\int_{t_0}^ta(s)\d s\right]\right.\nonumber\\&\left.\quad+(1-\alpha)\int_{t_0}^tb(s)\exp\left[(1-\alpha)\int_s^ta(r)\d r\right]\d s\right\}^{\frac{1}{1-\alpha}},
		\end{align}
		for all $t\in[t_0,\infty)$.
\end{lemma}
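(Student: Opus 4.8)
The plan is to reduce the nonlinear integral inequality to a linear one through a Bernoulli-type substitution, and then to resolve the resulting linear differential inequality with an integrating factor. First I would introduce the majorant
\begin{align*}
z(t):=c+\int_{t_0}^t\left(a(s)y(s)+b(s)y^{\alpha}(s)\right)\d s,
\end{align*}
so that $y(t)\leq z(t)$ for all $t\in[t_0,\infty)$ and $z(t_0)=c$. Since the integrand is non-negative and locally integrable, $z$ is locally absolutely continuous and non-decreasing, with $z'(t)=a(t)y(t)+b(t)y^{\alpha}(t)$ for almost every $t$. Invoking $y\leq z$ together with $a,b\geq0$ and $\alpha\geq0$ (so that $s\mapsto s^{\alpha}$ is non-decreasing on $[0,\infty)$) then yields the differential inequality $z'(t)\leq a(t)z(t)+b(t)z(t)^{\alpha}$ for a.e. $t$.

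To handle the possible degeneracy at $c=0$, where the substitution below would divide by zero, I would first replace $c$ by $c+\eps$ with $\eps>0$; this only enlarges $z$ and keeps it bounded below by $\eps$, and the estimate for general $c$ follows at the end by letting $\eps\downarrow0$ and using continuity of the right-hand side in $c$. Assuming now $z(t)\geq\eps>0$, I set $w(t):=z(t)^{1-\alpha}$, which is again locally absolutely continuous, and compute $w'=(1-\alpha)z^{-\alpha}z'$. The differential inequality for $z$ then linearizes to
\begin{align*}
w'(t)\leq(1-\alpha)a(t)w(t)+(1-\alpha)b(t), \quad \text{for a.e.}\ t\in[t_0,\infty).
\end{align*}

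Next I would multiply by the integrating factor $\exp\left(-(1-\alpha)\int_{t_0}^ta(r)\d r\right)$, turning the left-hand side into a total derivative, and integrate from $t_0$ to $t$ using $w(t_0)=c^{1-\alpha}$. This produces
\begin{align*}
w(t)\leq c^{1-\alpha}\exp\left[(1-\alpha)\int_{t_0}^ta(s)\d s\right]+(1-\alpha)\int_{t_0}^tb(s)\exp\left[(1-\alpha)\int_s^ta(r)\d r\right]\d s.
\end{align*}
Finally, raising both sides to the power $\frac{1}{1-\alpha}$ (a non-decreasing operation, as $1-\alpha>0$) and recalling $y\leq z=w^{\frac{1}{1-\alpha}}$ delivers the asserted bound after sending $\eps\downarrow0$.

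The only genuinely delicate points are the almost-everywhere differentiation of $z$, which is justified by absolute continuity of the indefinite Lebesgue integral of a locally integrable function, and the degeneracy at $c=0$; both are routine once the $\eps$-regularization is in place, so I expect no substantial obstacle. I note that one could instead apply the linear Gr\"onwall inequality (Lemma \ref{Gronw}) to the integrated form of the inequality for $w$, but this replaces the sharp kernel $\exp\left[(1-\alpha)\int_s^ta\right]$ by the coarser $\exp\left[(1-\alpha)\int_{t_0}^ta\right]$; obtaining the precise statement therefore requires the integrating-factor computation rather than a direct appeal to the linear lemma.
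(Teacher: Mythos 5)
Your proof is correct. One thing you could not have known: the paper does not prove this lemma at all---it is imported verbatim from Dragomir's book (reference [SSD], Theorem 21, Chapter 1, pp.~11), so there is no in-paper argument to compare against. Your route---majorize $y$ by $z(t)=c+\int_{t_0}^t\left(a(s)y(s)+b(s)y^{\alpha}(s)\right)\d s$, pass to the a.e.\ differential inequality $z'\leq az+bz^{\alpha}$, linearize via the Bernoulli substitution $w=z^{1-\alpha}$, and close with an integrating factor---is precisely the standard proof of this result in the literature, and you handle the delicate points properly: the a.e.\ differentiation of $z$ rests on absolute continuity of the indefinite Lebesgue integral, the $\eps$-regularization legitimately removes the degeneracy at $c=0$ (and also makes the chain rule for $w=z^{1-\alpha}$ valid, since $z\geq\eps$ keeps $x\mapsto x^{1-\alpha}$ Lipschitz on the relevant compact ranges), and the final passage $\eps\downarrow 0$ is justified by continuity of the bound in $c$. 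Your closing remark is also accurate: feeding the integrated $w$-inequality into the paper's linear Lemma \ref{Gronw} would replace the kernel $\exp\left[(1-\alpha)\int_s^t a(r)\d r\right]$ on the $b$-term by the coarser $\exp\left[(1-\alpha)\int_{t_0}^t a(r)\d r\right]$, so the integrating-factor computation is genuinely needed to obtain the statement as written.
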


\subsection{Linear operator}\label{opeA}
It is well-known from \cite{DFHM,HKTY} that every vector field $\u\in\mathbb{L}^p(\mathcal{O})$, for $1<p<\infty$ can be uniquely represented as $\u=\v+\nabla q,$ where $\v\in\mathbb{L}^p(\mathcal{O})$ with $\mathrm{div \ }\v=0$ in the sense of distributions in $\mathcal{O}$ with $\v\cdot\n=0$ on $\partial\mathcal{O}$ (in the sense of trace) and $q\in\mathrm{W}^{1,p}(\mathcal{O})$ (Helmholtz-Weyl or Helmholtz-Hodge decomposition). For smooth vector fields in $\mathcal{O}$, such a decomposition is an orthogonal sum in $\mathbb{L}^2(\mathcal{O})$. Note that $\u=\v+\nabla q$ holds for all $\u\in\mathbb{L}^p(\mathcal{O})$, so that we can define the projection operator $\mathcal{P}_p$  by $\mathcal{P}_p\u = \v$. Let us consider the set $\mathcal{E}_{p}(\mathcal{O}):=\left\{\nabla q:q\in \mathrm{W}^{1,p}({\mathcal{O}})\right\}$ equipped with the norm $\|\nabla q\|_{\L^p}$. Then, from the above discussion, we obtain $\L^p(\mathcal{O})=\wi\L^p(\mathcal{O})\oplus\mathcal{E}_{p}(\mathcal{O})$.   From  \cite[Theorem 1.4]{CSHS}, we further have 
	\begin{align*}
		\|\nabla q\|_{\L^p}\leq C\|\u\|_{\L^p}, \ \|\v\|_{\L^q}\leq (C+1)\|\u\|_{\L^p}\ \text{ and }\ \|\nabla q\|_{\L^p}+\|\v\|_{\L^q}\leq (2C+1)\|\u\|_{\L^p},
	\end{align*}
	where $C=C(\mathcal{O},p)>0$ is a constant such that \begin{align}\label{23}
		\|\nabla q\|_{\L^p}\leq C\sup_{0\neq\nabla\varphi\in \mathcal{E}_{p'}(\mathcal{O})}\frac{|\langle\nabla q,\nabla\varphi\rangle|}{\|\nabla\varphi\|_{\L^{p'}}}, \ \text{ for all }\ \nabla q\in \mathcal{E}_{p}(\mathcal{O}),
	\end{align} 
	with $\frac{1}{p}+\frac{1}{p'}=1$. Setting $\mathcal{P}_p\u:=\v$, we obtain a bounded linear operator $\mathcal{P}_p:\L^p(\mathcal{O})\to\wi\L^{p}(\mathcal{O})$ such that $\mathcal{P}_p^2=\mathcal{P}_p$ (projection). For $p=2$, $\mathcal{P}:=\mathcal{P}_2:\L^2(\mathcal{O})\to\H$ is an orthogonal projection.   Since $\mathcal{O}$ is of class $\C^2$, from \cite[Remark 1.6, Chapter 1, pp. 18]{Te}, we also infer that $\mathcal{P}$ maps $\H^1(\mathcal{O})$ into itself and is continuous for the norm of $\H^1(\mathcal{O})$.	
	We define
\begin{equation*}
\left\{
\begin{aligned}
\A\u:&=-\mathcal{P}\Delta\u,\;\u\in\D(\A),\\ \D(\A):&=\V\cap\H^{2}(\mathcal{O}).
\end{aligned}
\right.
\end{equation*}
It can be easily seen that the operator $\A$ is a non-negative self-adjoint operator in $\H$ with $\V=\D(\A^{1/2})$ and \begin{align}\label{2.7a}\langle \A\u,\u\rangle =\|\u\|_{\V}^2,\ \textrm{ for all }\ \u\in\V, \ \text{ so that }\ \|\A\u\|_{\V'}\leq \|\u\|_{\V}.\end{align}
For the bounded domain $\mathcal{O}$, the operator $\A$ is invertible and its inverse $\A^{-1}$ is bounded, self-adjoint and compact in $\H$. Thus, using spectral theorem, the spectrum of $\A$ consists of an infinite sequence $0< \lambda_1\leq \lambda_2\leq\ldots\leq \lambda_k\leq \ldots,$ with $\lambda_k\to\infty$ as $k\to\infty$ of eigenvalues. 
Moreover, there exists an orthogonal basis $\{\boldsymbol{w}_k\}_{k=1}^{\infty} $ of $\H$ consisting of eigen functions of $\A$ such that $\A \boldsymbol{w}_k =\lambda_k\boldsymbol{w}_k$,  for all $ k\in\mathbb{N}$.  We know that $\u$ can be expressed as $\u=\sum_{k=1}^{\infty}\langle\u,\boldsymbol{w}_k\rangle \boldsymbol{w}_k$ and $\A\u=\sum_{k=1}^{\infty}\lambda_k\langle\u,\boldsymbol{w}_k\rangle \boldsymbol{w}_k$. Thus, it is immediate that 
\begin{align}\label{poin}
\|\nabla\u\|_{\mathbb{H}}^2=\langle \A\u,\u\rangle =\sum_{k=1}^{\infty}\lambda_k|\langle \u,\boldsymbol{w}_k\rangle|^2\geq \lambda_1\sum_{k=1}^{\infty}|\langle\u,\boldsymbol{w}_k\rangle|^2=\lambda_1\|\u\|_{\mathbb{H}}^2.
\end{align}

It should be noted that, while proving global weak solutions, we are not using the Gagliardo-Nirenberg, Ladyzhenskaya or Agmon inequalities. The results obtained in this work are true for $2\leq d\leq 4$ in bounded domains (for more details see step (iii) of the proof of the Theorem \ref{main}) and $d\geq 2$ in periodic domains (see Remark \ref{rem3.7}). The  following  interpolation inequality is used frequently in the paper.
Assume $1\leq s\leq r\leq t\leq \infty$, $\theta\in(0,1)$ such that $\frac{1}{r}=\frac{\theta}{s}+\frac{1-\theta}{t}$ and $\u\in\L^s(\mathcal{O})\cap\L^t(\mathcal{O})$, then we have 
\begin{align}\label{211}
\|\u\|_{\L^r}\leq\|\u\|_{\L^s}^{\theta}\|\u\|_{\L^t}^{1-\theta}. 
\end{align}

\subsection{Bilinear operator}\label{opeB}
Let us define the \emph{trilinear form} $b(\cdot,\cdot,\cdot):\V\times\V\times\V\to\R$ by $$b(\u,\v,\w)=\int_{\mathcal{O}}(\u(x)\cdot\nabla)\v(x)\cdot\w(x)\d x=\sum_{i,j=1}^d\int_{\mathcal{O}}u_i(x)\frac{\partial v_j(x)}{\partial x_i}w_j(x)\d x.$$ If $\u, \v$ are such that the linear map $b(\u, \v, \cdot) $ is continuous on $\V$, the corresponding element of $\V'$ is denoted by $\B(\u, \v)$. We also denote (with an abuse of notation) $\B(\u) = \B(\u, \u)=\mathcal{P}(\u\cdot\nabla)\u$.
An integration by parts gives 
\begin{equation}\label{b0}
\left\{
\begin{aligned}
b(\u,\v,\v) &= 0,\text{ for all }\u,\v \in\V,\\
b(\u,\v,\w) &=  -b(\u,\w,\v),\text{ for all }\u,\v,\w\in \V.
\end{aligned}
\right.\end{equation}
In the trilinear form, an application of H\"older's inequality yields
\begin{align*}
|b(\u,\v,\w)|=|b(\u,\w,\v)|\leq \|\u\|_{\widetilde{\L}^{r+1}}\|\v\|_{\widetilde{\L}^{\frac{2(r+1)}{r-1}}}\|\w\|_{\V},
\end{align*}
for all $\u\in\V\cap\widetilde{\L}^{r+1}$, $\v\in\V\cap\widetilde{\L}^{\frac{2(r+1)}{r-1}}$ and $\w\in\V$, so that we get 
\begin{align}\label{2p9}
\|\B(\u,\v)\|_{\V'}\leq \|\u\|_{\widetilde{\L}^{r+1}}\|\v\|_{\widetilde{\L}^{\frac{2(r+1)}{r-1}}}.
\end{align}
Hence, the trilinear map $b : \V\times\V\times\V\to \R$ has a unique extension to a bounded trilinear map from $(\V\cap\widetilde{\L}^{r+1})\times(\V\cap\widetilde{\L}^{\frac{2(r+1)}{r-1}})\times\V$ to $\R$. It can also be seen that $\B$ maps $ \V\cap\widetilde{\L}^{r+1}$  into $\V'+\widetilde{\L}^{\frac{r+1}{r}}$ and using interpolation inequality (see \eqref{211}), we get 
\begin{align}\label{212}
\left|\langle \B(\u,\u),\v\rangle \right|=\left|b(\u,\v,\u)\right|\leq \|\u\|_{\widetilde{\L}^{r+1}}\|\u\|_{\widetilde{\L}^{\frac{2(r+1)}{r-1}}}\|\v\|_{\V}\leq\|\u\|_{\widetilde{\L}^{r+1}}^{\frac{r+1}{r-1}}\|\u\|_{\H}^{\frac{r-3}{r-1}}\|\v\|_{\V},
\end{align}
for all $\v\in\V\cap\widetilde{\L}^{r+1}$. Thus, we have 
\begin{align}\label{2.9a}
\|\B(\u)\|_{\V'+\widetilde{\L}^{\frac{r+1}{r}}}\leq\|\u\|_{\widetilde{\L}^{r+1}}^{\frac{r+1}{r-1}}\|\u\|_{\H}^{\frac{r-3}{r-1}}.
\end{align}
Using \eqref{2p9}, for $\u,\v\in\V\cap\widetilde{\L}^{r+1}$, we also have 
\begin{align}\label{lip}
\|\B(\u)-\B(\v)\|_{\V'+\widetilde{\L}^{\frac{r+1}{r}}}&\leq \|\B(\u-\v,\u)\|_{\V'}+\|\B(\v,\u-\v)\|_{\V'}\nonumber\\&\leq \left(\|\u\|_{\widetilde{\L}^{\frac{2(r+1)}{r-1}}}+\|\v\|_{\widetilde{\L}^{\frac{2(r+1)}{r-1}}}\right)\|\u-\v\|_{\widetilde{\L}^{r+1}}\nonumber\\&\leq \left(\|\u\|_{\H}^{\frac{r-3}{r-1}}\|\u\|_{\widetilde{\L}^{r+1}}^{\frac{2}{r-1}}+\|\v\|_{\H}^{\frac{r-3}{r-1}}\|\v\|_{\widetilde{\L}^{r+1}}^{\frac{2}{r-1}}\right)\|\u-\v\|_{\widetilde{\L}^{r+1}},
\end{align}
for $r>3$, by using the interpolation inequality. For $r=3$, a calculation similar to \eqref{lip} yields 
\begin{align}
\|\B(\u)-\B(\v)\|_{\V'+\widetilde{\L}^{\frac{4}{3}}}&\leq \left(\|\u\|_{\widetilde{\L}^{4}}+\|\v\|_{\widetilde{\L}^{4}}\right)\|\u-\v\|_{\widetilde{\L}^{4}},
\end{align}
hence $\B(\cdot):\V\cap\widetilde{\L}^{4}\to\V'+\widetilde{\L}^{\frac{4}{3}}$ is a locally Lipschitz operator. 
\subsection{Nonlinear operator}\label{opeC}
Let us now consider the operator $\mathcal{C}_r(\u):=\mathcal{P}(|\u|^{r-1}\u)$. For convenience of notation, we use $\mathcal{C}$ for $\mathcal{C}_r$ in the rest of the paper.  It is immediate that $\langle\mathcal{C}(\u),\u\rangle =\|\u\|_{\widetilde{\L}^{r+1}}^{r+1}$ and the map $\mathcal{C}(\cdot):\widetilde{\L}^{r+1}\to\widetilde{\L}^{\frac{r+1}{r}}$ is Gateaux differentiable with its Gateaux derivative 
\begin{align}\label{Gaetu}
	\mathcal{C}'(\u)\v&=\left\{\begin{array}{cl}\mathcal{P}(\v),&\text{ for }r=1,\\ \left\{\begin{array}{cc}\mathcal{P}(|\u|^{r-1}\v)+(r-1)\mathcal{P}\left(\frac{\u}{|\u|^{3-r}}(\u\cdot\v)\right),&\text{ if }\u\neq \mathbf{0},\\\mathbf{0},&\text{ if }\u=\mathbf{0},\end{array}\right.&\text{ for } 1<r<3,\\ \mathcal{P}(|\u|^{r-1}\v)+(r-1)\mathcal{P}(\u|\u|^{r-3}(\u\cdot\v)), &\text{ for }r\geq 3,\end{array}\right.
\end{align}
for all $\v\in\widetilde{\L}^{r+1}$. Let us define the function of one variable $\varphi(\cdot):[0,1]\to\R^d$ by 
\begin{align*}
	\varphi(\theta):=|(1-\theta)\y+\theta\z|^{r-1}((1-\theta)\y+\theta\z), \text{ for  } \ \theta\in[0,1],
\end{align*}
where $\y,\z\in\R^d$. It is easy to see that
\begin{align*}
	\varphi(1)-\varphi(0)=-\left(|\y|^{r-1}\y-|\z|^{r-1}\z\right).
\end{align*}
One can easily compute the derivative of $\varphi$ as $\varphi'(\theta)=-r|(1-\theta)\y+\theta\z|^{r-1}(\y-\z)$. By an application of the Mean Value Theorem, we can estimate
\begin{align}\label{MVT}
	\left||\y|^{r-1}\y-|\z|^{r-1}\z\right|&=|\varphi(1)-\varphi(0)|\leq\max\limits_{\theta\in[0,1]}|\varphi'(\theta)|\nonumber\\&=\max\limits_{\theta\in[0,1]}\left|-r|(1-\theta)\y+\theta\z|^{r-1}(\y-\z)\right|\nonumber\\&\leq r\left(\max\limits_{\theta\in[0,1]}|(1-\theta)\y+\theta\z|^{r-1}\right)
	|\y-\z|\nonumber\\&\leq r\left(|\y|+|\z|\right)^{r-1}|\y-\z|.
\end{align}
By making the use of \eqref{MVT} and H\"older's inequality, we finally calculate
\begin{align}\label{213}
&\langle \mathcal{P}(|\u|^{r-1}\u)-\mathcal{P}(|\v|^{r-1}\v),\u-\v\rangle\nonumber\\
&=\langle (|\u|^{r-1}\u)-(|\v|^{r-1}\v),\u-\v\rangle\nonumber\\&\leq \||\u|^{r-1}\u-|\v|^{r-1}\v\|_{\widetilde{\L}^{\frac{r+1}{r}}}\|\u-\v\|_{\widetilde{\L}^{r+1}}\nonumber\\&\leq r\left(\|\u\|_{\widetilde{\L}^{r+1}}+\|\v\|_{\widetilde{\L}^{r+1}}\right)^{r-1}\|\u-\v\|_{\widetilde{\L}^{r+1}}^2,
\end{align}
for all $\u,\v\in\widetilde{\L}^{r+1}$.
Thus the operator $\mathcal{C}(\cdot):\widetilde{\L}^{r+1}\to\widetilde{\L}^{\frac{r+1}{r}}$ is locally Lipschitz. Moreover, for any $r\in[1,\infty)$, we have 

\begin{align}\label{2pp11}
&\langle \mathcal{P}(\u|\u|^{r-1})-\mathcal{P}(\v|\v|^{r-1}),\u-\v\rangle\nonumber\\&=\int_{\mathcal{O}}\left(\u(x)|\u(x)|^{r-1}-\v(x)|\v(x)|^{r-1}\right)\cdot(\u(x)-\v(x))\d x\nonumber\\&=\int_{\mathcal{O}}\left(|\u(x)|^{r+1}-|\u(x)|^{r-1}\u(x)\cdot\v(x)-|\v(x)|^{r-1}\u(x)\cdot\v(x)+|\v(x)|^{r+1}\right)\d x\nonumber\\&\geq\int_{\mathcal{O}}\left(|\u(x)|^{r+1}-|\u(x)|^{r}|\v(x)|-|\v(x)|^{r}|\u(x)|+|\v(x)|^{r+1}\right)\d x\nonumber\\&=\int_{\mathcal{O}}\left(|\u(x)|^r-|\v(x)|^r\right)(|\u(x)|-|\v(x)|)\d x\geq 0. 
\end{align}
Furthermore, we find 
\begin{align}\label{224}
&\langle\mathcal{P}(\u|\u|^{r-1})-\mathcal{P}(\v|\v|^{r-1}),\u-\v\rangle\nonumber\\&=\langle|\u|^{r-1},|\u-\v|^2\rangle+\langle|\v|^{r-1},|\u-\v|^2\rangle+\langle\v|\u|^{r-1}-\u|\v|^{r-1},\u-\v\rangle\nonumber\\&=\||\u|^{\frac{r-1}{2}}(\u-\v)\|_{\H}^2+\||\v|^{\frac{r-1}{2}}(\u-\v)\|_{\H}^2\nonumber\\&\quad+\langle\u\cdot\v,|\u|^{r-1}+|\v|^{r-1}\rangle-\langle|\u|^2,|\v|^{r-1}\rangle-\langle|\v|^2,|\u|^{r-1}\rangle.
\end{align}
But, we know that 
\begin{align*}
&\langle\u\cdot\v,|\u|^{r-1}+|\v|^{r-1}\rangle-\langle|\u|^2,|\v|^{r-1}\rangle-\langle|\v|^2,|\u|^{r-1}\rangle\nonumber\\&=-\frac{1}{2}\||\u|^{\frac{r-1}{2}}(\u-\v)\|_{\H}^2-\frac{1}{2}\||\v|^{\frac{r-1}{2}}(\u-\v)\|_{\H}^2+\frac{1}{2}\langle\left(|\u|^{r-1}-|\v|^{r-1}\right),\left(|\u|^2-|\v|^2\right)\rangle \nonumber\\&\geq -\frac{1}{2}\||\u|^{\frac{r-1}{2}}(\u-\v)\|_{\H}^2-\frac{1}{2}\||\v|^{\frac{r-1}{2}}(\u-\v)\|_{\H}^2.
\end{align*}
From \eqref{224}, we finally have 
\begin{align}\label{2.23}
&\langle\mathcal{P}(\u|\u|^{r-1})-\mathcal{P}(\v|\v|^{r-1}),\u-\v\rangle\geq \frac{1}{2}\||\u|^{\frac{r-1}{2}}(\u-\v)\|_{\H}^2+\frac{1}{2}\||\v|^{\frac{r-1}{2}}(\u-\v)\|_{\H}^2\geq 0,
\end{align}
for $r\geq 1$. 

%
%

\subsection{Monotonicity}
Let us now show the monotonicity as well as the hemicontinuity properties of the linear and nonlinear operators, which play a crucial role in this paper. Interested readers can refer \cite{VB1,VB,SHNP,HDJ} for further reading. 
\begin{definition}
	Let $\X$ be a Banach space and let $\X^{\prime}$ be its topological dual.
	\begin{enumerate} 
		\item [(a)] An operator $\G:\mathrm{D}\rightarrow
	\X^{\prime},$ where $\mathrm{D}=\mathrm{D}(\G)\subset \X$ is said to be
	\emph{monotone} if
	$$\langle\G(\x)-\G(\y),\x-\y\rangle\geq
	0,\ \text{ for all } \ \x,\y\in \mathrm{D}.$$  
	
\item [(b)] Let $\G:\D\to\X^{\prime}$ be a monotone operator. We say that $\G$ is \emph{maximal monotone operator} if
$\langle \w-\G(\y),\x-\y\rangle\geq 0$ on $\D$ implies that $\x\in\D$ and $\w=\G(\x)$. In particular, $\G$ is maximal monotone if it has no nontrivial monotone extension.

\item [(c)] The operator $\G(\cdot)$ is said to be \emph{hemicontinuous} at $\x\in\D$, if for every $\y\in\X$ and every $\lambda\in[0,\lambda_0)$, $\lambda_0>0$ with $\x+\lambda\y\in\D$, we have 
\begin{align*}
	\lim_{\lambda\to 0}\langle\G(\x+\lambda \y),\w\rangle=\langle\G(\x),\w\rangle
\end{align*}   
 for every $\w\in\X^{\prime}$. If this is true for every $\x\in\D$, then we say that $\G$ is hemicontinuous.

\item [(d)] The operator $\G(\cdot)$ is said to be \emph{demicontinuous}, at $\x\in\mathrm{D}$ if for every sequence $\{\x_n\}_{n\geq1}$ in $\D$ such that $\x_n\to \x$ in $\X$ implies $\G(\x_n)\xrightarrow{w}\G(\x)$ in $\X^{\prime}$. If this is true for every $\x\in\D$, then we say that $\G$ is demicontinuous.

\item [(e)] The operator $\G(\cdot)$ is called \emph{coercive} if $$\lim_{\|\x\|_{\X}\to\infty}\frac{\langle\G(\x),\x\rangle}{\|\x\|_{\X}}=+\infty.$$ 
	\end{enumerate} 
\end{definition}

\begin{remark}
	Clearly demicontinuity implies hemicontinuity. Moreover, if $\G(\cdot)$ is a monotone operator and $\D(\G)=\X$, then converse is also true (see \cite[Remark 1.24, Chapter 3]{SHNP}).
	\end{remark}

\begin{theorem}\label{thm2.2}
	Let $\u,\v\in\V\cap\widetilde{\L}^{r+1}$, for $r>3$. Then,	for the operator $\G(\u)=\mu \A\u+\B(\u)+\beta\mathcal{C}(\u)$, we  have 
	\begin{align}\label{fe}
	\langle(\G(\u)-\G(\v),\u-\v\rangle+\varrho\|\u-\v\|_{\H}^2\geq 0,
	\end{align}
	where
	 \begin{align}\label{215}
		\varrho=\frac{r-3}{2\mu(r-1)}\left(\frac{2}{\beta\mu (r-1)}\right)^{\frac{2}{r-3}}.
	\end{align} 
	That is, the operator $\G+\varrho\mathrm{I}_d$, where $\I_d$ is an identity operator on $\H$, is a monotone operator from $\V\cap\widetilde{\L}^{r+1}$ to $\V'+\widetilde{\L}^{\frac{r+1}{r}}$. 
\end{theorem}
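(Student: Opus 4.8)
The plan is to set $\w:=\u-\v$ and split the duality pairing into its three natural pieces,
\begin{align*}
\langle\G(\u)-\G(\v),\u-\v\rangle=\mu\langle\A\u-\A\v,\w\rangle+\langle\B(\u)-\B(\v),\w\rangle+\beta\langle\mathcal{C}(\u)-\mathcal{C}(\v),\w\rangle,
\end{align*}
and to estimate each term in turn. The linear part is exact: by \eqref{2.7a} one has $\mu\langle\A\w,\w\rangle=\mu\|\w\|_{\V}^2$. The Forchheimer part is already bounded from below by the chain of computations culminating in \eqref{2.23}; I would keep only the $\v$-weighted contribution and discard the non-negative $\u$-weighted one, obtaining
\begin{align*}
\beta\langle\mathcal{C}(\u)-\mathcal{C}(\v),\w\rangle\geq\frac{\beta}{2}\||\v|^{\frac{r-1}{2}}\w\|_{\H}^2.
\end{align*}

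The convective term is the heart of the argument. Writing $\B(\u)-\B(\v)=\B(\u,\w)+\B(\w,\v)$ and using the orthogonality $b(\u,\w,\w)=0$ from \eqref{b0}, the pairing collapses to the single trilinear form $b(\w,\v,\w)$, which by the antisymmetry in \eqref{b0} equals $-b(\w,\w,\v)$. The pointwise bound $|b(\w,\w,\v)|\leq\int_{\mathcal{O}}|\w||\nabla\w||\v|\d x$ followed by Young's inequality then yields
\begin{align*}
\langle\B(\u)-\B(\v),\w\rangle\geq-\frac{\mu}{2}\|\w\|_{\V}^2-\frac{1}{2\mu}\||\v|\w\|_{\H}^2.
\end{align*}
The first term is absorbed by the exact linear estimate, leaving $\tfrac{\mu}{2}\|\w\|_{\V}^2$ to spare; the crucial point is that the surviving term $\frac{1}{2\mu}\||\v|\w\|_{\H}^2$ must be dominated by the Forchheimer dissipation $\frac{\beta}{2}\||\v|^{\frac{r-1}{2}}\w\|_{\H}^2$ up to a multiple of $\|\w\|_{\H}^2$.

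This domination is purely pointwise and is exactly where the hypothesis $r>3$ enters. I would establish the scalar inequality $\frac{1}{2\mu}a^2\leq\frac{\beta}{2}a^{r-1}+\varrho$ for all $a\geq0$ by maximizing $h(a)=\frac{1}{2\mu}a^2-\frac{\beta}{2}a^{r-1}$: since $r>3$ the term $a^{r-1}$ dominates $a^2$ at infinity, so $h$ attains a finite maximum at the interior critical point $a^{*}=\left(\frac{2}{\beta\mu(r-1)}\right)^{\frac{1}{r-3}}$, and substituting $a^{*}$ back reproduces precisely the constant $\varrho$ in \eqref{215} (equivalently, this is Young's inequality with conjugate exponents $\frac{r-1}{2}$ and $\frac{r-1}{r-3}$). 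Applying this with $a=|\v(x)|$ and integrating against $|\w|^2$ gives $\frac{1}{2\mu}\||\v|\w\|_{\H}^2\leq\frac{\beta}{2}\||\v|^{\frac{r-1}{2}}\w\|_{\H}^2+\varrho\|\w\|_{\H}^2$, so the two Forchheimer integrals cancel and one is left with $\langle\G(\u)-\G(\v),\w\rangle\geq\frac{\mu}{2}\|\w\|_{\V}^2-\varrho\|\w\|_{\H}^2$, which is stronger than \eqref{fe}. The main obstacle is precisely the convective estimate: one must recognize that the cross-term $b(\w,\w,\v)$ is exactly the quantity that the Forchheimer nonlinearity can absorb, and that the sharp price for this absorption is the constant $\varrho$ produced by the scalar maximization.
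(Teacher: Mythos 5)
Your proposal is correct and follows essentially the same route as the paper: the same splitting of $\G$ into its three parts, the same reduction of the convective term to $-b(\u-\v,\u-\v,\v)$ via \eqref{b0}, the same lower bound \eqref{2.27} for the Forchheimer term, and the same absorption argument producing $\langle\G(\u)-\G(\v),\u-\v\rangle\geq\frac{\mu}{2}\|\u-\v\|_{\V}^2-\varrho\|\u-\v\|_{\H}^2$. The only cosmetic difference is that you apply Young's inequality pointwise (via the scalar maximization of $\frac{1}{2\mu}a^2-\frac{\beta}{2}a^{r-1}$), whereas the paper in \eqref{2.29} first applies H\"older's inequality at the integral level and then Young's inequality with exponents $\frac{r-1}{2}$ and $\frac{r-1}{r-3}$; both yield the identical constant $\varrho$ in \eqref{215}.
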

\begin{proof}
	We estimate $	\langle\A\u-\A\v,\u-\v\rangle $ by	using an integration by parts as
	\begin{align}\label{ae}
	\langle\A\u-\A\v,\u-\v\rangle =\|\u-\v\|^2_{\V}.
	\end{align}
	From \eqref{2.23}, we easily have 
	\begin{align}\label{2.27}
	\beta	\langle\mathcal{C}(\u)-\mathcal{C}(\v),\u-\v\rangle \geq \frac{\beta}{2}\||\v|^{\frac{r-1}{2}}(\u-\v)\|_{\H}^2. 
	\end{align}
	Note that $\langle\B(\u,\u-\v),\u-\v\rangle=0$ and it implies that
	\begin{equation}\label{441}
	\begin{aligned}
	\langle \B(\u)-\B(\v),\u-\v\rangle &=\langle\B(\u,\u-\v),\u-\v\rangle +\langle \B(\u-\v,\v),\u-\v\rangle \nonumber\\&=\langle\B(\u-\v,\v),\u-\v\rangle=-\langle\B(\u-\v,\u-\v),\v\rangle.
	\end{aligned}
	\end{equation} 
	Using H\"older's and Young's inequalities, we estimate $|\langle\B(\u-\v,\u-\v),\v\rangle|$ as  
	\begin{align}\label{2p28}
	|\langle\B(\u-\v,\u-\v),\v\rangle|&\leq\|\u-\v\|_{\V}\|\v(\u-\v)\|_{\H}\nonumber\\&\leq\frac{\mu }{2}\|\u-\v\|_{\V}^2+\frac{1}{2\mu }\|\v(\u-\v)\|_{\H}^2.
	\end{align}
	We take the term $\|\v(\u-\v)\|_{\H}^2$ from \eqref{2p28} and use H\"older's and Young's inequalities to estimate it as (see \cite{KWH} also)
	\begin{align}\label{2.29}
	&\int_{\mathcal{O}}|\v(x)|^2|\u(x)-\v(x)|^2\d x\nonumber\\&=\int_{\mathcal{O}}|\v(x)|^2|\u(x)-\v(x)|^{\frac{4}{r-1}}|\u(x)-\v(x)|^{\frac{2(r-3)}{r-1}}\d x\nonumber\\&\leq\left(\int_{\mathcal{O}}|\v(x)|^{r-1}|\u(x)-\v(x)|^2\d x\right)^{\frac{2}{r-1}}\left(\int_{\mathcal{O}}|\u(x)-\v(x)|^2\d x\right)^{\frac{r-3}{r-1}}\nonumber\\&\leq{\beta\mu }\left(\int_{\mathcal{O}}|\v(x)|^{r-1}|\u(x)-\v(x)|^2\d x\right)\nonumber\\&\quad+\frac{r-3}{r-1}\left(\frac{2}{\beta\mu (r-1)}\right)^{\frac{2}{r-3}}\left(\int_{\mathcal{O}}|\u(x)-\v(x)|^2\d x\right),
	\end{align}
	for $r>3$. Using \eqref{2.29} in \eqref{2p28}, we find 
	\begin{align}\label{2.30}
	&|\langle\B(\u-\v,\u-\v),\v\rangle|\nonumber\\&\leq\frac{\mu }{2}\|\u-\v\|_{\V}^2+\frac{\beta}{2}\||\v|^{\frac{r-1}{2}}(\u-\v)\|_{\H}^2+\varrho\|\u-\v\|_{\H}^2.
	\end{align}
	Combining \eqref{ae}, \eqref{2.27} and \eqref{2.30}, we get 
	\begin{align}
	\langle(\G(\u)-\G(\v),\u-\v\rangle+\varrho\|\u-\v\|_{\H}^2\geq\frac{\mu }{2}\|\u-\v\|_{\V}^2\geq 0,
	\end{align}
	for $r>3$ and the estimate \eqref{fe} follows.  
\end{proof} 
\begin{theorem}\label{thm2.3}
	For the critical case $r=3$ with $2\beta\mu \geq 1$, the operator $\G(\cdot):\V\cap\widetilde{\L}^{4}\to \V'+\widetilde{\L}^{\frac{4}{3}}$ is globally monotone, that is, for all $\u,\v\in\V$, we have 
	\begin{align}\label{218}\langle\G(\u)-\G(\v),\u-\v\rangle\geq 0.\end{align}
\end{theorem}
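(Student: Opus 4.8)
The plan is to follow the architecture of the proof of Theorem \ref{thm2.2}, splitting $\G=\mu\A+\B+\beta\mathcal{C}$ into its three pieces, estimating each on the difference $\w:=\u-\v$, and recombining; the one essential new ingredient, needed to reach the \emph{sharp} threshold $2\beta\mu\geq1$ rather than the cruder $\beta\mu\geq1$, is a symmetrization of the bilinear term. Since $2\leq d\leq 4$ we have the Sobolev embedding $\V\hookrightarrow\widetilde{\L}^4$, so $\V\cap\widetilde{\L}^4=\V$ and it indeed suffices to argue for arbitrary $\u,\v\in\V$, as in the statement.

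First I would record the linear contribution, which by \eqref{2.7a} (integration by parts) is exactly $\mu\langle\A\u-\A\v,\w\rangle=\mu\|\w\|_{\V}^2$. For the absorption term I would invoke \eqref{2.23} with $r=3$, so that $\frac{r-1}{2}=1$, and crucially \emph{keep both} summands rather than discarding one as was done in \eqref{2.27}:
\[
\beta\langle\mathcal{C}(\u)-\mathcal{C}(\v),\w\rangle\geq\frac{\beta}{2}\||\u|\w\|_{\H}^2+\frac{\beta}{2}\||\v|\w\|_{\H}^2.
\]
Retaining both weighted $\L^2$ norms is precisely what will later supply the factor of two in the coefficient condition.

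The decisive step is the bilinear term. Exactly as in \eqref{441}, the cancellation $\langle\B(\u,\w),\w\rangle=0$ gives $\langle\B(\u)-\B(\v),\w\rangle=-\langle\B(\w,\w),\v\rangle$; decomposing $\B(\u)-\B(\v)=\B(\w,\u)+\B(\v,\w)$ the other way and using $\langle\B(\v,\w),\w\rangle=0$ gives symmetrically $\langle\B(\u)-\B(\v),\w\rangle=-\langle\B(\w,\w),\u\rangle$ (the two representations agree since $\langle\B(\w,\w),\w\rangle=0$). Averaging them, I would write $\langle\B(\u)-\B(\v),\w\rangle=-\frac{1}{2}\langle\B(\w,\w),\u+\v\rangle$, bound each half by the same H\"older step as in \eqref{2p28}, namely $|\langle\B(\w,\w),\v\rangle|\leq\|\w\|_{\V}\||\v|\w\|_{\H}$ and likewise with $\u$, and then apply Young's inequality with weight $\mu$ to obtain
\[
|\langle\B(\u)-\B(\v),\w\rangle|\leq\frac{\mu}{2}\|\w\|_{\V}^2+\frac{1}{4\mu}\left(\||\u|\w\|_{\H}^2+\||\v|\w\|_{\H}^2\right).
\]

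Finally I would collect the three estimates. The gradient terms combine to $\mu\|\w\|_{\V}^2-\frac{\mu}{2}\|\w\|_{\V}^2=\frac{\mu}{2}\|\w\|_{\V}^2\geq0$, while the weighted $\L^2$ terms combine to $\left(\frac{\beta}{2}-\frac{1}{4\mu}\right)\left(\||\u|\w\|_{\H}^2+\||\v|\w\|_{\H}^2\right)$, which is nonnegative exactly when $2\beta\mu\geq1$; this yields \eqref{218}. I expect the only genuine subtlety to be this symmetrization: pairing the single representation $-\langle\B(\w,\w),\v\rangle$ with a single Young inequality and one summand of the $\mathcal{C}$-estimate reproduces merely $\beta\mu\geq1$, so splitting the bilinear term evenly between $\u$ and $\v$ and matching each half with the corresponding term in \eqref{2.23} is what sharpens the constant to the stated critical value.
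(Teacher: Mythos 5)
Your proof is correct, but your key claim about what is needed to reach the sharp threshold is not: the symmetrization $\langle\B(\u)-\B(\v),\w\rangle=-\tfrac{1}{2}\langle\B(\w,\w),\u+\v\rangle$ is a valid identity (both representations $-\langle\B(\w,\w),\v\rangle$ and $-\langle\B(\w,\w),\u\rangle$ hold and agree since $\langle\B(\w,\w),\w\rangle=0$), and your bookkeeping does give $\tfrac{\mu}{2}\|\w\|_{\V}^2+\left(\tfrac{\beta}{2}-\tfrac{1}{4\mu}\right)\left(\||\u|\w\|_{\H}^2+\||\v|\w\|_{\H}^2\right)\geq0$ under $2\beta\mu\geq1$ — but the paper reaches exactly the same threshold \emph{without} symmetrizing. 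The paper keeps the single representation $-\langle\B(\w,\w),\v\rangle$, keeps only the $\v$-weighted summand $\tfrac{\beta}{2}\|\v\w\|_{\H}^2$ from \eqref{2.23} (as in \eqref{231}), and applies Young's inequality with the full weight $\mu$, i.e. $\|\v\w\|_{\H}\|\w\|_{\V}\leq\mu\|\w\|_{\V}^2+\tfrac{1}{4\mu}\|\v\w\|_{\H}^2$, thereby sacrificing the entire gradient term; the gradient contributions then cancel exactly and what remains is $\tfrac{1}{2}\left(\beta-\tfrac{1}{2\mu}\right)\|\v\w\|_{\H}^2\geq0$, again precisely when $2\beta\mu\geq1$. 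So the choice is not between $\beta\mu\geq1$ and $2\beta\mu\geq1$ (your claimed dichotomy comes from fixing the Young weight at $\mu/2$ rather than optimizing it); it is between two ways of spending the same budget. Your route buys a strictly stronger conclusion — a retained coercivity term $\tfrac{\mu}{2}\|\w\|_{\V}^2$ and a lower bound symmetric in $\u$ and $\v$, which could be useful downstream (e.g. for uniqueness estimates in the style of \eqref{346}) — at the cost of an extra decomposition; the paper's route is shorter and discards the gradient term entirely.
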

\begin{proof}
	From \eqref{2.23}, we have 
	\begin{align}\label{231}
	\beta\langle\mathcal{C}(\u)-\mathcal{C}(\v),\u-\v\rangle\geq\frac{\beta}{2}\|\v(\u-\v)\|_{\H}^2. 
	\end{align}
	We estimate $|\langle\B(\u-\v,\u-\v),\v\rangle|$ using H\"older's and Young's inequalities as 
	\begin{align}\label{232}
	|\langle\B(\u-\v,\u-\v),\v\rangle|\leq\|\v(\u-\v)\|_{\H}\|\u-\v\|_{\V} \leq\mu \|\u-\v\|_{\V}^2+\frac{1}{4\mu }\|\v(\u-\v)\|_{\H}^2.
	\end{align}
	Combining \eqref{ae}, \eqref{231} and \eqref{232}, we obtain 
	\begin{align}
	\langle\G(\u)-\G(\v),\u-\v\rangle\geq\frac{1}{2}\left(\beta-\frac{1}{2\mu }\right)\|\v(\u-\v)\|_{\H}^2\geq 0,
	\end{align}
	provided $2\beta\mu \geq 1$. 
\end{proof}
\begin{remark}\label{rem2.4}
	1. As in \cite{YZ}, for $r\geq 3$, one can estimate $|\langle\B(\u-\v,\u-\v),\v\rangle|$ as 
	\begin{align}\label{2.26}
	&	|\langle\B(\u-\v,\u-\v),\v\rangle|\nonumber\\&\leq \mu \|\u-\v\|_{\V}^2+\frac{1}{4\mu }\int_{\mathcal{O}}|\v(x)|^2|\u(x)-\v(x)|^2\d x\nonumber\\&= \mu \|\u-\v\|_{\V}^2+\frac{1}{4\mu }\int_{\mathcal{O}}|\u(x)-\v(x)|^2\left(|\v(x)|^{r-1}+1\right)\frac{|\v(x)|^2}{|\v(x)|^{r-1}+1}\d x\nonumber\\&\leq \mu \|\u-\v\|_{\V}^2+\frac{1}{4\mu }\int_{\mathcal{O}}|\v(x)|^{r-1}|\u(x)-\v(x)|^2\d x+\frac{1}{4\mu }\int_{\mathcal{O}}|\u(x)-\v(x)|^2\d x,
	\end{align}
	where we used the fact that $\left\|\frac{|\v|^2}{|\v|^{r-1}+1}\right\|_{\widetilde{\L}^{\infty}}<1$, for $r\geq 3$. The above estimate yields a local monotonicity result given in \eqref{218}, provided $2\beta\mu \geq 1$. 
	
	2. For $d=2$ and $r= 3$, one can estimate $|\langle\B(\u-\v,\u-\v),\v\rangle|$ using H\"older's, Ladyzhenskaya and Young's inequalities  as 
	\begin{align}\label{2.21}
	|\langle\B(\u-\v,\u-\v),\v\rangle|&\leq \|\u-\v\|_{\widetilde\L^4}\|\u-\v\|_{\V}\|\v\|_{\widetilde\L^4}\nonumber\\&\leq 2^{1/4}\|\u-\v\|_{\H}^{1/2}\|\u-\v\|_{\V}^{3/2}\|\v\|_{\widetilde\L^4}\nonumber\\&\leq  \frac{\mu }{2}\|\u-\v\|_{\V}^2+\frac{27}{16\mu^3} \|\v\|_{\widetilde{\L}^4}^4\|\u-\v\|_{\H}^2.
	\end{align}
	Combining \eqref{ae}, \eqref{2.27} and \eqref{2.21}, we obtain 
	\begin{align}\label{fe2}
	\langle\G(\u)-\G(\v),\u-\v\rangle+ \frac{27}{32\mu ^3}N^4\|\u-\v\|_{\H}^2\geq 0,
	\end{align}
	for all $\v\in\widehat{\mathbb{B}}_N$, where $\widehat{\mathbb{B}}_N$ is an $\widetilde{\L}^4$-ball of radius $N$, that is,
	$
	\widehat{\mathbb{B}}_N:=\big\{\z\in\widetilde{\L}^4:\|\z\|_{\widetilde{\L}^4}\leq N\big\}.
	$ Thus, the operator $\G(\cdot)$ is locally monotone in this case (see \cite{MJSS,MTM}, etc). 
\end{remark}

Let us now show that the operator $\G:\V\cap\widetilde{\L}^{r+1}\to \V'+\widetilde{\L}^{\frac{r+1}{r}}$ is hemicontinuous, which is useful in proving the existence of weak solutions and strong solutions to the system \eqref{1}. 
\begin{lemma}\label{lem2.8}
	The operator $\G:\V\cap\widetilde{\L}^{r+1}\to \V'+\widetilde{\L}^{\frac{r+1}{r}}$ is demicontinuous. 
\end{lemma}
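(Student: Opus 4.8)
The plan is to exploit the fact that demicontinuity (strong-to-weak sequential continuity) is weaker than strong continuity, and to notice that each of the three summands making up $\G$ is already known to be locally Lipschitz, hence continuous, from $\V\cap\widetilde{\L}^{r+1}$ into $\V'+\widetilde{\L}^{\frac{r+1}{r}}$. Since a strongly convergent sequence is a fortiori weakly convergent, it suffices to prove that $\G(\x_n)\to\G(\x)$ \emph{strongly} in $\V'+\widetilde{\L}^{\frac{r+1}{r}}$ whenever $\x_n\to\x$ strongly in $\V\cap\widetilde{\L}^{r+1}$. So I would fix $\x$ and such a sequence $\x_n$, i.e. $\|\x_n-\x\|_{\V}\to0$ and $\|\x_n-\x\|_{\widetilde{\L}^{r+1}}\to0$. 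In particular $\{\x_n\}$ is bounded in $\V$ and in $\widetilde{\L}^{r+1}$, and by the embedding $\V\hookrightarrow\H$ (the Poincar\'e inequality \eqref{poin}) it is bounded in $\H$ as well. Then I would estimate the three pieces $\mu\A$, $\B$, and $\beta\mathcal{C}$ separately.

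For the linear part, \eqref{2.7a} shows that $\A$ is a bounded linear map from $\V$ into $\V'\hookrightarrow\V'+\widetilde{\L}^{\frac{r+1}{r}}$, with $\|\A(\x_n-\x)\|_{\V'}\leq\|\x_n-\x\|_{\V}\to0$, so $\mu\A\x_n\to\mu\A\x$ in $\V'$ and hence in $\V'+\widetilde{\L}^{\frac{r+1}{r}}$. For the Forchheimer term, the pointwise mean-value bound \eqref{MVT} combined with H\"older's inequality, exactly as in \eqref{213}, gives the local Lipschitz estimate $\|\mathcal{C}(\x_n)-\mathcal{C}(\x)\|_{\widetilde{\L}^{\frac{r+1}{r}}}\leq r\big(\|\x_n\|_{\widetilde{\L}^{r+1}}+\|\x\|_{\widetilde{\L}^{r+1}}\big)^{r-1}\|\x_n-\x\|_{\widetilde{\L}^{r+1}}$; since the prefactor stays bounded and $\|\x_n-\x\|_{\widetilde{\L}^{r+1}}\to0$, we obtain $\beta\mathcal{C}(\x_n)\to\beta\mathcal{C}(\x)$ in $\widetilde{\L}^{\frac{r+1}{r}}$.

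For the convective term I would invoke the interpolation-based Lipschitz bound \eqref{lip}, valid for $r>3$ (together with its $r=3$ analogue stated immediately afterwards), namely $\|\B(\x_n)-\B(\x)\|_{\V'+\widetilde{\L}^{\frac{r+1}{r}}}\leq\big(\|\x_n\|_{\H}^{\frac{r-3}{r-1}}\|\x_n\|_{\widetilde{\L}^{r+1}}^{\frac{2}{r-1}}+\|\x\|_{\H}^{\frac{r-3}{r-1}}\|\x\|_{\widetilde{\L}^{r+1}}^{\frac{2}{r-1}}\big)\|\x_n-\x\|_{\widetilde{\L}^{r+1}}$. Here the $\H$- and $\widetilde{\L}^{r+1}$-norms of $\x_n$ remain bounded by the observations above, so the prefactor is uniformly bounded and $\B(\x_n)\to\B(\x)$ in $\V'+\widetilde{\L}^{\frac{r+1}{r}}$. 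Adding the three convergences yields $\G(\x_n)\to\G(\x)$ strongly, hence weakly, in $\V'+\widetilde{\L}^{\frac{r+1}{r}}$, which is the required demicontinuity.

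The one place to be careful—and the only mild subtlety—is the convective term, whose Lipschitz constant involves the $\H$-norm rather than only the norms appearing in the topology of $\V\cap\widetilde{\L}^{r+1}$; this is why I would first record that strong convergence in $\V$ (via \eqref{poin}) forces boundedness in $\H$, so that the prefactor in \eqref{lip} stays controlled along the sequence. Beyond this bookkeeping there is no genuine obstacle: each operator is in fact strongly continuous, a property strictly stronger than the claimed demicontinuity, so the statement follows directly by assembling the three a priori estimates already derived.
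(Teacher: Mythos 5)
Your proposal is correct and follows essentially the same route as the paper: the same term-by-term treatment of $\mu\A$, $\B$ and $\beta\mathcal{C}$, using the bound \eqref{2.7a} for the linear part, the interpolation estimate behind \eqref{212}/\eqref{lip} for the convective part, and the mean-value/H\"older estimate \eqref{213} for the Forchheimer part. The only cosmetic difference is that you state these bounds in operator-norm form and conclude strong (norm) continuity, which trivially implies demicontinuity, whereas the paper pairs $\G(\u^n)-\G(\u)$ against a fixed $\v\in\V\cap\widetilde{\L}^{r+1}$; since the paper's estimates are uniform in $\v$ over the unit ball, the two computations are identical.
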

\begin{proof}
	Let us take a sequence $\u^n\to \u$ in $\V\cap\widetilde{\L}^{r+1}$, that is, $\|\u^n-\u\|_{\widetilde\L^{r+1}}+\|\u^n-\u\|_{\V}\to 0$, as $n\to\infty$. For any $\v\in\V\cap\widetilde{\L}^{r+1}$, we consider 
	\begin{align}\label{214}
	\langle\G(\u^n)-\G(\u),\v\rangle&=\mu \langle \A(\u^n)-\A(\u),\v\rangle+\langle\B(\u^n)-\B(\u),\v\rangle+\beta\langle \mathcal{C}(\u^n)-\mathcal{C}(\u),\v\rangle.
	\end{align} 
	Taking $\langle \A(\u^n)-\A(\u),\v\rangle$ from \eqref{214}, we estimate it as 
	\begin{align}
	|\langle \A(\u^n)-\A(\u),\v\rangle|=|(\nabla(\u^n-\u),\nabla\v)|\leq\|\u^n-\u\|_{\V}\|\v\|_{\V}\to 0, \ \text{ as } \ n\to\infty, 
	\end{align}
	since $\u^n\to \u$ in $\V$. We estimate the term $\langle\B(\u^n)-\B(\u),\v\rangle$ from \eqref{214} using H\"older's inequality as 
	\begin{align}
	|\langle\B(\u^n)-\B(\u),\v\rangle|&=|\langle\B(\u^n,\u^n-\u),\v\rangle+\langle\B(\u^n-\u,\u),\v\rangle|\nonumber\\&
	\leq|\langle\B(\u^n,\v),\u^n-\u\rangle|+|\langle\B(\u^n-\u,\v),\u\rangle|\nonumber\\&\leq\left(\|\u^n\|_{\widetilde{\L}^{\frac{2(r+1)}{r-1}}}+\|\u\|_{\widetilde{\L}^{\frac{2(r+1)}{r-1}}}\right)\|\u^n-\u\|_{\widetilde{\L}^{r+1}}\|\v\|_{\V}\nonumber\\&\leq \left(\|\u^n\|_{\H}^{\frac{r-3}{r-1}}\|\u^n\|_{\widetilde{\L}^{r+1}}^{\frac{2}{r-1}}+\|\u\|_{\H}^{\frac{r-3}{r-1}}\|\u\|_{\widetilde{\L}^{r+1}}^{\frac{2}{r-1}}\right)\|\u^n-\u\|_{\widetilde{\L}^{r+1}}\|\v\|_{\V}\nonumber\\& \to 0, \ \text{ as } \ n\to\infty, 
	\end{align}
	since $\u^n\to\u$ in $\widetilde\L^{r+1}$ and $\u^n,\u\in\V\cap\widetilde\L^{r+1}$. The final term $\langle \mathcal{C}(\u^n)-\mathcal{C}(\u),\v\rangle$ in \eqref{214} can be estimated in a similar way as in \eqref{213} and we conclude that
	\begin{align}
	|\langle \mathcal{C}(\u^n)-\mathcal{C}(\u),\v\rangle|\leq r\|\u^n-\u\|_{\widetilde{\L}^{r+1}}\left(\|\u^n\|_{\widetilde{\L}^{r+1}}+\|\u\|_{\widetilde{\L}^{r+1}}\right)^{r-1}\|\v\|_{\widetilde{\L}^{r+1}}\to 0, 
	\end{align}
 as $n\to\infty$, since $\u^n\to\u$ in $\widetilde{\L}^{r+1}$ and $\u^n,\u\in\V\cap\widetilde{\L}^{r+1}$, for $r\geq 3$. From the above convergences, it is immediate that $\langle\G(\u^n)-\G(\u),\v\rangle \to 0$, for all $\v\in \V\cap\widetilde{\L}^{r+1}$.
	Hence the operator $\G:\V\cap\widetilde{\L}^{r+1}\to \V'+\widetilde{\L}^{\frac{r+1}{r}}$ is demicontinuous, which implies that the operator $\G(\cdot)$ is hemicontinuous also. 
\end{proof}

\section{Existence and uniqueness of weak solutions}\label{sec3}\setcounter{equation}{0}
In this section, we provide an abstract formulation of the system \eqref{1} and prove the existence and uniqueness of weak solutions using the monotonicity property of the linear and nonlinear operators and the Minty-Browder technique. 
\subsection{Abstract formulation and weak solution} We take the Helmholtz-Hodge orthogonal projection $\mathcal{P}$ in \eqref{1} to obtain  the abstract formulation for  $t\in(0,T)$ as: 
\begin{equation}\label{kvf}
\left\{
\begin{aligned}
\frac{\d\u(t)}{\d t}+\mu \A\u(t)+\B(\u(t))+ \beta\mathcal{C}(\u(t))&=\f(t),\\
\u(0)&=\u_0\in\H,
\end{aligned}
\right.
\end{equation}
for $r\geq 3$, where $\f\in\mathrm{L}^2(0,T;\V')$. Strictly speaking one should use $\mathcal{P}\f$ instead of $\f$, for simplicity, we use $\f$. Let us now provide the definition of \emph{weak solution} of the system \eqref{kvf} for $r\geq 3$.
\begin{definition}\label{weakd}
For $r\geq 3$,	a function  $$\u\in\mathrm{L}^{\infty}(0,T;\H)\cap\mathrm{L}^2(0,T;\V)\cap\mathrm{L}^{r+1}(0,T;\widetilde\L^{r+1})),$$  with $\frac{\d\u}{\d t}\in\mathrm{L}^{2}(0,T;\mathbb{V}')+\mathrm{L}^{\frac{r+1}{r}}(0,T;\widetilde\L^{\frac{r+1}{r}}),$  is called a \emph{weak solution} to the system (\ref{kvf}), if for $\f\in\mathrm{L}^2(0,T;\V')$ and  $\u_0\in\H$, $\u(\cdot)$ satisfies:
	\begin{align}\label{3.13}
&-\int_{t_0}^{t_1}\langle\u(s),\partial_t\v(s)\rangle\d s+\mu\int_{t_0}^{t_1}(\nabla\u(s),\nabla\v(s))\d s+\int_{t_0}^{t_1}\langle(\u(s)\cdot\nabla)\u(s),\v(s)\rangle\d s\nonumber\\&+\beta\int_{t_0}^{t_1}\langle\u(s)|\u(s)|^{r-1},\v(s)\rangle\d s= -\langle\u(t_1),\v(t_1)\rangle+\langle\u(t_0),\v(t_0)\rangle+\int_{t_0}^{t_1}\langle\f(s),\v(s)\rangle\d s,
	\end{align}
for all test functions $\v\in\mathcal{V}_T$, almost all initial times $t_0\in[0,T)$, including zero, and almost every $t_1\in(t_0,T)$. A function $\u$ is called a \emph{global weak solution} if it is a weak solution for all $T>0$.
\end{definition}
\begin{definition}
	A \emph{Leray-Hopf weak solution} of the convective Brinkman--Forchheimer equations \eqref{kvf} with the initial condition $\u_0\in\H$ is a weak solution satisfying the following \emph{strong energy inequality}:
	\begin{align*}
	\|\u(t_1)\|_{\H}^2+\mu\int_{t_0}^{t_1}\|\u(s)\|_{\V}^2\d s+2\beta\int_{t_0}^{t_1}\|\u(s)\|_{\widetilde\L^{r+1}}^{r+1}\d s\leq\|\u(t_0)\|_{\H}^2+\frac{1}{\mu}\int_{t_0}^{t_1}\|\f(s)\|_{\V'}^2\d s,
	\end{align*}
	for almost every $t_0\in[0, T)$, including zero, and all $t_1\in(t_0, T)$. 
\end{definition}

\begin{remark}
The regularity  $\u\in\mathrm{L}^{\infty}(0,T;\H)\cap\mathrm{L}^2(0,T;\V)\cap\mathrm{L}^{r+1}(0,T;\widetilde\L^{r+1}))$ and  $\frac{\d\u}{\d t}\in\mathrm{L}^{2}(0,T;\mathbb{V}')+\mathrm{L}^{\frac{r+1}{r}}(0,T;\widetilde\L^{\frac{r+1}{r}})\hookrightarrow \mathrm{L}^{\frac{r+1}{r}}(0,T;\mathbb{V}'+\widetilde\L^{\frac{r+1}{r}}) $ imply $\u\in\W^{1,\frac{r+1}{r}}(0,T;\mathbb{V}'+\widetilde\L^{\frac{r+1}{r}})\hookrightarrow\C([0,T];\mathbb{V}'+\widetilde\L^{\frac{r+1}{r}}) $ (\cite[Theorem 2, pp. 302]{LCE}). Since $\H$ is reflexive and the embedding $\H\hookrightarrow\mathbb{V}'+\widetilde\L^{\frac{r+1}{r}}$ is continuous,   therefore, by an application of \cite[Lemma 8.1, Chapter 3, pp. 275]{JLLEM} (also see \cite[Proposition 1.7.1, Chapter 1, pp. 61]{PCAM}) yields $\u\in\C_w([0,T];\H)$, where $\u\in\C_w([0,T];\H)$ denotes the space of functions $\u:[0,T]\to \H$ which are weakly continuous. That is, for all $\boldsymbol{\zeta}\in\H$, the scalar function $$[0,T]\ni t\mapsto (\u(t),\boldsymbol{\zeta})\in\R$$ is continuous on $[0,T]$. Therefore, the first two terms appearing in the right hand side of \eqref{3.13} make sense. 
\end{remark}

From \cite{SNA}, it is known that for every $\u_0\in\H$, there exists at least one global Leray-Hopf weak solution of the system \eqref{kvf}. For $r\geq 3$, it has been proved in \cite{CLF,KWH,KT2} that every weak solution of \eqref{kvf} with the initial condition $\u_0\in\H$ satisfies the energy equality: 
	\begin{align}\label{33} 
\|\u(t_1)\|_{\H}^2+2\mu\int_{t_0}^{t_1}\|\u(s)\|_{\V}^2\d s+2\beta\int_{t_0}^{t_1}\|\u(s)\|_{\widetilde\L^{r+1}}^{r+1}\d s=\|\u(t_0)\|_{\H}^2+2\int_{t_0}^{t_1}\langle\f(s),\v(s)\rangle\d s.
\end{align}
Our main goal is to prove the existence and uniqueness of Leray-Hopf weak solution to the system \eqref{kvf} satisfying the energy equality \eqref{33}. As we are using the eigenspace approximation introduced in \cite{CLF}, we replace $\mathcal{V}_T$ with the space ${\mathcal{V}}_{\infty}$ consisting of finite combinations of eigenfunctions of the Stokes operator (see \cite{JCR4}). We define 
\begin{align}\label{eqn-vinfty}
{\mathcal{V}}_{\infty}:=\left\{\v: \v=\sum_{k=1}^N\alpha_k(t)\boldsymbol{w}_k(x):\alpha_k\in\C_0^1([0,\infty)), \ \text{ for some } \ N\in\mathbb{N}\right\},
\end{align}
where $\{\boldsymbol{w}_k\}_{k=1}^{\infty}$ are the eigenfunctions of the Stokes operator (see \cite[Theorem 2.24, Chapter 2, pp. 60]{JCR4}).

\subsection{Energy estimates}\label{sec2.3}
We use the Faedo-Galerkin approximation  method to show the existence of a weak solution of \eqref{kvf}. Note that since $\V$ is separable Hilbert space and $\V$ is dense in $\H$, we can find a linearly independent sequence $\{\boldsymbol{w}_j\}_{j\in\N}$ which is total in $\V$. In particular, one can take $\{\boldsymbol{w}_j\}_{j\in\N}$ as the eigenfunctions of the Stokes operator. By using an Gram-Schmidt orthogonalization process, we may assume that $\{\boldsymbol{w}_j\}_{j\in\N}$ is an orthonormal basis in $\H$. Let us denote by $\H_n:=\mathrm{span}\{\boldsymbol{w}_j:j=1,\ldots,n, \ n\in\N\}$, with the norm inherited from $\H$, and $\V_n:=\mathrm{span}\{\boldsymbol{w}_j:j=1,\ldots,n, \ n\in\N\}$, with the norm inherited from $\V$. We also denote by $\P_n$, the orthogonal projection of $\V'$ to $\H_n$, that is, for every $\x\in\V'$, we write $\P_n\x=\sum\limits_{j=1}^n \langle\x,\boldsymbol{w}_j\rangle \boldsymbol{w}_j$. Since, every element $x\in\H$, induces a functional $\x^*\in\V'$ by the formula $\langle\x^*,\y\rangle=\langle\x,\y\rangle$, $\y\in\V$, then $\P_n|_\H$, is the orthogonal projection from $\H$ onto $\H_n$. Hence in particular, $\mathrm{P}_n$ is the orthogonal projection from $\H$ onto $\H_n$, given by $\mathrm{P}_n\x=\sum\limits_{i=1}^n\left(\x,\boldsymbol{w}_i\right)\boldsymbol{w}_i$, so that $\|\mathrm{P}_n\x-\x\|_{\H}\to 0$ as $n\to\infty$ for all $\x\in\H$. 
Let $\mathfrak{j}_n:\H_n\hookrightarrow\H$ be the natural embedding and let us define $\A _n:=\P_n\A\mathfrak{j}_n$, $\B_n:=\P_n\B(\mathfrak{j}_n)$,  $\mathcal{C}_n:=\P_n\mathcal{C}(\mathfrak{j}_n)$ and $\f_n:=\P_n\f$.
Let us now fix $T>0$. For each $n\in\N$, we search for a approximate solution of the form 
\begin{align*}
	\u^n(x,t):=\sum\limits_{k=1}^n g_k^n(t)\boldsymbol{w}_k(x),
\end{align*}
where $g_1^n(t),\ldots,g_k^n(t)$ are unknown scalar functions of $t$ such that it satisfies the following finite-dimensional system of ordinary differential equations in $\H_n$:
\begin{equation}\label{appxode1}
\left\{
\begin{aligned}
\frac{\d}{\d t}\left(\u^n(t),\boldsymbol{w}_j\right)&=-(\mu \mathrm{A}_n\u^n(t)+\mathrm{B}_n(\u^n(t))+\mathcal{C}_n(\u^n(t))-\f^n(t),\boldsymbol{w}_j),\\
(\u^n(0),\boldsymbol{w}_j)&=(\u_0^n,\boldsymbol{w}_j),
\end{aligned}
\right.
\end{equation}
for a.e. $t\in[0,T]$ and $j=1,\ldots,n$, where $\u_0^n=\mathrm{P}_n\u_0$.
Since $\B_n(\cdot)$ and $\mathcal{C}_n(\cdot)$ are locally Lipschitz (see \eqref{lip} and \eqref{213}), therefore by using the Carath\'eodory's existence theorem, there exists a local maximal solution $\u^n\in\C([0,T^*];\H_n),$  for some $0<T^*\leq T$ of the system  \eqref{appxode1} and uniqueness is immediate from the local Lipschitz property. The time $T^*$ can be extended to $T$ by establishing the uniform energy estimates of the solutions satisfied by the system \eqref{appxode1}. Note that the energy estimate is true for any $r\in[1,\infty)$.

\begin{proposition}[A-priori estimates]\label{prop4.5}
	Let $\u_0\in \H$ and $\f\in\mathrm{L}^2(0,T;\V')$  be given. Then, for $r\in[1,\infty)$, we have 
	\begin{align}\label{energy1}
	\sup_{t\in[0,T]}\|\u^n(t)\|_{\H}^2+\mu \int_0^T\|\u^n(t)\|_{\V}^2\d t+2\beta\int_0^T\|\u^n(t)\|_{\widetilde\L^{r+1}}^{r+1}\d t\leq \|\u_0\|_{\H}^2+\frac{1}{\mu }\int_0^T\|\f(t)\|_{\V'}^2\d t.
	\end{align}
\end{proposition}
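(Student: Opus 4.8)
The plan is to establish \eqref{energy1} as the basic energy identity for the Galerkin system \eqref{appxode1}, using only the structural cancellation of the convective term, the sign of the absorption term, and Young's inequality. First I would test the equation against $\u^n(t)$ itself: multiplying the $j$-th equation in \eqref{appxode1} by $g_j^n(t)$ and summing over $j=1,\dots,n$ (which is legitimate since $\u^n(t)=\sum_{k=1}^n g_k^n(t)\boldsymbol{w}_k$ lies in $\H_n$ and the $\boldsymbol{w}_j$ are orthonormal in $\H$) gives
\begin{align*}
\frac{1}{2}\frac{\d}{\d t}\|\u^n(t)\|_{\H}^2+\mu(\A_n\u^n,\u^n)+(\B_n(\u^n),\u^n)+\beta(\mathcal{C}_n(\u^n),\u^n)=(\f_n,\u^n).
\end{align*}

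Next I would simplify each term. Since $\u^n(t)\in\H_n$ and $\P_n$ is the self-adjoint orthogonal projection onto $\H_n$, every projection may be dropped when paired with $\u^n$, so that $(\A_n\u^n,\u^n)=\langle\A\u^n,\u^n\rangle=\|\u^n\|_{\V}^2$ by \eqref{2.7a}, $(\mathcal{C}_n(\u^n),\u^n)=\langle\mathcal{C}(\u^n),\u^n\rangle=\|\u^n\|_{\widetilde{\L}^{r+1}}^{r+1}$ (Subsection \ref{opeC}), and $(\f_n,\u^n)=\langle\f,\u^n\rangle$. The decisive structural fact is that the convective contribution vanishes identically, $(\B_n(\u^n),\u^n)=b(\u^n,\u^n,\u^n)=0$ by \eqref{b0}; this is precisely what makes the estimate independent of $r$ and free of any smallness assumption. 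For the forcing I would use the $\V'$--$\V$ duality together with Young's inequality, $|\langle\f,\u^n\rangle|\leq\frac{1}{2\mu}\|\f\|_{\V'}^2+\frac{\mu}{2}\|\u^n\|_{\V}^2$, absorbing half of the viscous term into the left-hand side to arrive at
\begin{align*}
\frac{\d}{\d t}\|\u^n(t)\|_{\H}^2+\mu\|\u^n(t)\|_{\V}^2+2\beta\|\u^n(t)\|_{\widetilde{\L}^{r+1}}^{r+1}\leq\frac{1}{\mu}\|\f(t)\|_{\V'}^2.
\end{align*}

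Finally I would integrate this differential inequality over $[0,t]$, use $\|\u^n(0)\|_{\H}=\|\P_n\u_0\|_{\H}\leq\|\u_0\|_{\H}$, and then take the supremum of the $\H$-norm term while evaluating the dissipation integrals at $T$; this yields \eqref{energy1}, valid for every $r\in[1,\infty)$. I do not anticipate a genuine obstacle, as the remaining points are purely bookkeeping. One is that the identity is a priori valid only on the maximal existence interval $[0,T^*)$ furnished by Carath\'eodory's theorem; but the resulting uniform bound on $\|\u^n(t)\|_{\H}$ rules out blow-up of the coefficients $g_k^n$ and hence extends the solution to all of $[0,T]$, where the estimate then holds. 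The other is the standard and harmless convention of collecting the supremum of the energy and the time-integrals of the dissipation on a single left-hand side, each being controlled by the right-hand side of \eqref{energy1}.
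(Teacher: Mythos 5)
Your proof is correct and follows essentially the same route as the paper: testing against $\u^n$, using the cancellation $(\B_n(\u^n),\u^n)=0$, splitting the forcing by Young's inequality as $\frac{1}{2\mu}\|\f\|_{\V'}^2+\frac{\mu}{2}\|\u^n\|_{\V}^2$, integrating in time, and using $\|\P_n\u_0\|_{\H}\leq\|\u_0\|_{\H}$. The only cosmetic difference is that you pair the forcing directly as $(\f_n,\u^n)=\langle\f,\u^n\rangle$ (thereby avoiding the paper's bound $\|\f^n\|_{\V'}\leq\|\f\|_{\V'}$), and your closing remark on extending the local Carath\'eodory solution to all of $[0,T]$ is exactly the point the paper makes just before stating the proposition.
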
 
\begin{proof}
 From the definition of the operators $\A$, $\B(\cdot)$ and $\mathcal{C}(\cdot)$ (see Subsections \ref{opeA}-\ref{opeC}), it is observed that
	\begin{align*}
		(\A_n\u^n,\u^n)&=(\P_n\A\u^n,\u^n)=(\A\u^n,\u^n)=\|\u^n\|_{\V}^2,\\
		(\B_n(\u^n),\u^n)&=(\P_n\B(\u^n),\u^n)=(\B(\u^n),\u^n)=0,\\
		(\mathcal{C}_n(\u^n),\u^n)&=(\P_n\mathcal{C}(\u^n),\u^n)=(\mathcal{C}(\u^n),\u^n)=\|\u^n\|_{\widetilde\L^{r+1}}^{r+1}.
	\end{align*}
	Now, multiplying \eqref{appxode1} by $g_k^n(\cdot)$, summing up over $k=1,\ldots,n$ and using the above identities, we obtain
	\begin{align*}
	&\frac{1}{2}\frac{\d}{\d t}\|\u^n(t)\|_{\H}^2+\mu \|\u^n(t)\|_{\V}^2+\beta\|\u^n(t)\|_{\widetilde\L^{r+1}}^{r+1}=(\f^n(t),\u^n(t)),
	\end{align*}
	 Integrating the equality from $0$ to $t$, we find 
	\begin{align}\label{415}
	&\|\u^n(t)\|_{\H}^2+2\mu \int_0^t\|\u^n(s)\|_{\V}^2\d s+2\beta\int_0^t\|\u^n(s)\|_{\widetilde\L^{r+1}}^{r+1}\d s\nonumber\\&= \|\u^n_0\|_{\H}^2+2 \int_0^t(\f^n(s),\u^n(s))\d s.
	\end{align}
	Using Cauchy-Schwarz and Young's inequalities, we estimate $|(\f^n,\u^n)|$ as 
	\begin{align*}
	|(\f^n,\u^n)|\leq \|\f^n\|_{\V'}\|\u^n\|_{\V}\leq \frac{1}{2\mu }\|\f^n\|_{\V'}^2+\frac{\mu }{2}\|\u^n\|_{\V}^2.
	\end{align*}
	Thus, using the fact that $\|\u_0^n\|_{\H}=\|\mathrm{P}_n\u_0\|_{\H}\leq \|\u_0\|_{\H}$ and $\|\f^n\|_{\V'}\leq \|\f\|_{\V'}$ in \eqref{415}, we infer 
	\begin{align}\label{417}
	&\|\u^n(t)\|_{\H}^2+\mu \int_0^t\|\u^n(s)\|_{\V}^2\d s+2\beta\int_0^t\|\u^n(s)\|_{\widetilde\L^{r+1}}^{r+1}\d s\leq \|\u_0\|_{\H}^2+\frac{1}{\mu }\int_0^t\|\f(s)\|_{\V'}^2\d s,
	\end{align}
for all $t\in[0,T]$	 and \eqref{energy1} follows. 
\end{proof}

\subsection{Global existence and uniqueness} Let us now show that the system \eqref{kvf} has a unique weak solution by making use of the Faedo-Galerkin approximation technique and energy estimates obtained in the previous subsection. The monotonicity property of the linear and nonlinear operators obtained in Theorem \ref{thm2.2}, and  the Minty-Browder technique are used to obtain global solvability results. 
\begin{theorem}\label{main}
For $2\leq d\leq 4$, let $\u_0\in \H$ and $\f\in\mathrm{L}^{2}(0,T;\V')$  be given.  Then  there exists a unique weak solution to the system (\ref{kvf}) satisfying $$\u\in\mathrm{C}([0,T];\H)\cap\mathrm{L}^2(0,T;\V)\cap\mathrm{L}^{r+1}(0,T;\widetilde\L^{r+1}),$$ for $r>3$.
\end{theorem}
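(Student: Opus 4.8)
The plan is to construct the solution by the Faedo--Galerkin scheme already set up in \eqref{appxode1}, and to identify the weak limits of the nonlinear terms through the monotonicity of Theorem \ref{thm2.2} together with the Minty--Browder technique. The uniform bounds of Proposition \ref{prop4.5} place the approximations $\u^n$ in a ball of $\mathrm{L}^{\infty}(0,T;\H)\cap\mathrm{L}^2(0,T;\V)\cap\mathrm{L}^{r+1}(0,T;\widetilde{\L}^{r+1})$ whose radius is independent of $n$, so the energy estimate itself is in hand and the entire difficulty lies in passing to the limit in the equation.

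First I would convert the a priori bounds into compactness. From $\langle\mathcal{C}(\u^n),\u^n\rangle=\|\u^n\|_{\widetilde{\L}^{r+1}}^{r+1}$ and \eqref{2.9a} I would read off that $\mathcal{C}(\u^n)$ is bounded in $\mathrm{L}^{\frac{r+1}{r}}(0,T;\widetilde{\L}^{\frac{r+1}{r}})$, that $\A\u^n$ is bounded in $\mathrm{L}^2(0,T;\V')$, and that $\B(\u^n)$ is bounded in $\mathrm{L}^{\frac{r+1}{r}}(0,T;\V'+\widetilde{\L}^{\frac{r+1}{r}})$ (in fact in $\mathrm{L}^{r-1}$ there, since $r>3$); consequently $\frac{\d\u^n}{\d t}$ is bounded in $\mathrm{L}^{\frac{r+1}{r}}(0,T;\V'+\widetilde{\L}^{\frac{r+1}{r}})$. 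By Banach--Alaoglu and reflexivity I extract a (non-relabelled) subsequence with $\u^n\rightharpoonup\u$ weakly-$\ast$ in $\mathrm{L}^{\infty}(0,T;\H)$, weakly in $\mathrm{L}^2(0,T;\V)\cap\mathrm{L}^{r+1}(0,T;\widetilde{\L}^{r+1})$, and $\G(\u^n)=\mu\A\u^n+\B(\u^n)+\beta\mathcal{C}(\u^n)\rightharpoonup\G_0$ in $\mathrm{L}^{\frac{r+1}{r}}(0,T;\V'+\widetilde{\L}^{\frac{r+1}{r}})$. The compact embedding $\V\hookrightarrow\H$ and the uniform bound on $\frac{\d\u^n}{\d t}$ feed the Aubin--Lions--Simon lemma to give strong convergence $\u^n\to\u$ in $\mathrm{L}^2(0,T;\H)$ (and a.e. along a further subsequence). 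Passing to the limit in \eqref{appxode1} against the basis yields $\frac{\d\u}{\d t}+\G_0=\f$, so it only remains to prove $\G_0=\G(\u)$.

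This identification is the heart of the proof and the step I expect to be hardest. Since for $r>3$ the operator is monotone only after the shift $\G+\varrho\I_d$ (Theorem \ref{thm2.2}), I would run Minty--Browder with the weight $e^{-2\varrho t}$. For any $\v\in\mathrm{L}^2(0,T;\V)\cap\mathrm{L}^{r+1}(0,T;\widetilde{\L}^{r+1})$, monotonicity of $\G+\varrho\I_d$ gives $\int_0^T e^{-2\varrho t}\big[\langle\G(\u^n)-\G(\v),\u^n-\v\rangle+\varrho\|\u^n-\v\|_{\H}^2\big]\d t\geq 0$. The delicate point is to bound $\limsup_n\int_0^T e^{-2\varrho t}\langle\G(\u^n),\u^n\rangle\d t$ from above. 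Testing \eqref{appxode1} with $\u^n$ and multiplying by $e^{-2\varrho t}$ produces the weighted Galerkin energy identity; taking $\limsup$ and using $\|\u_0^n\|_{\H}\to\|\u_0\|_{\H}$, the weak lower semicontinuity $\liminf_n\|\u^n(T)\|_{\H}^2\geq\|\u(T)\|_{\H}^2$ (together with $\u^n(T)\rightharpoonup\u(T)$ in $\H$), the strong $\mathrm{L}^2(0,T;\H)$ convergence controlling the $\varrho\|\u^n\|_{\H}^2$ term, and $\f^n\to\f$, I would obtain $\limsup_n\int_0^T e^{-2\varrho t}\langle\G(\u^n),\u^n\rangle\d t\leq\int_0^T e^{-2\varrho t}\langle\G_0,\u\rangle\d t$, the right-hand side being computed from the weighted energy equality for the limit equation $\frac{\d\u}{\d t}+\G_0=\f$. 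Inserting this into the monotonicity inequality and letting $n\to\infty$ gives $\int_0^T e^{-2\varrho t}\big[\langle\G_0-\G(\v),\u-\v\rangle+\varrho\|\u-\v\|_{\H}^2\big]\d t\geq 0$ for all such $\v$. Choosing $\v=\u-\lambda\w$ with $\lambda>0$, dividing by $\lambda$, and letting $\lambda\to 0^+$ while invoking the demicontinuity of $\G$ (Lemma \ref{lem2.8}) and dominated convergence forces $\int_0^T e^{-2\varrho t}\langle\G_0-\G(\u),\w\rangle\d t\geq 0$ for every $\w$; replacing $\w$ by $-\w$ yields $\G_0=\G(\u)$, so $\u$ solves \eqref{kvf}.

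Finally I would settle uniqueness and time-continuity. If $\u_1,\u_2$ are two solutions with the same data, then $\w=\u_1-\u_2$ satisfies $\frac{1}{2}\frac{\d}{\d t}\|\w\|_{\H}^2=-\langle\G(\u_1)-\G(\u_2),\u_1-\u_2\rangle\leq\varrho\|\w\|_{\H}^2$ by Theorem \ref{thm2.2}, and Lemma \ref{Gronw} gives $\|\w(t)\|_{\H}^2\leq e^{2\varrho t}\|\w(0)\|_{\H}^2=0$, hence $\u_1=\u_2$. The regularity $\u\in\mathrm{L}^2(0,T;\V)\cap\mathrm{L}^{r+1}(0,T;\widetilde{\L}^{r+1})$ with $\frac{\d\u}{\d t}\in\mathrm{L}^2(0,T;\V')+\mathrm{L}^{\frac{r+1}{r}}(0,T;\widetilde{\L}^{\frac{r+1}{r}})$ makes $t\mapsto\|\u(t)\|_{\H}^2$ absolutely continuous and yields the energy equality \eqref{33}; combined with the weak continuity $\u\in\C_w([0,T];\H)$, the continuity of $t\mapsto\|\u(t)\|_{\H}$ upgrades weak to strong continuity, giving $\u\in\C([0,T];\H)$ and $\u(0)=\u_0$. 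The restriction $2\leq d\leq 4$ enters only at the level indicated in step (iii) of the argument, in controlling the convective term; the monotonicity and Minty--Browder steps described above are otherwise valid for all $r>3$ independently of $d$.
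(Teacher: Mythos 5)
Your skeleton — Galerkin approximation, the a priori bounds of Proposition \ref{prop4.5}, weak limits, monotonicity of $\G+\varrho\I_d$ with the weight $e^{-2\varrho t}$, Minty--Browder via the demicontinuity of Lemma \ref{lem2.8}, and Gr\"onwall for uniqueness — is the same as the paper's. But there is a genuine gap at exactly the point you dismiss in one clause: the right-hand side of your key inequality is ``computed from the weighted energy equality for the limit equation,'' and that energy equality is never proved. In a bounded domain it is not a routine consequence of the regularity of $\u$. Two things go wrong with the routine argument. First, the exponents do not allow a direct Lions--Magenes type lemma: $\u$ lies only in $\mathrm{L}^{2}(0,T;\V\cap\wi\L^{r+1})$ (the intersection norm is merely square-integrable, since its $\V$-part is), while $\frac{\d\u}{\d t}\in\mathrm{L}^{\frac{r+1}{r}}(0,T;\V'+\wi\L^{\frac{r+1}{r}})$, and $\frac{r}{r+1}+\frac{1}{2}>1$ for $r>1$, so $\langle\frac{\d\u}{\d t},\u\rangle$ is not integrable by a single H\"older application; one must split the derivative and pair each piece in its own duality, which forces one to use (approximations of) $\u$ itself as a test function in its own equation. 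Second, the limit equation \eqref{eqn43} is available only against finite combinations of eigenfunctions, and extending it, and the chain rule, to test functions like $\u$ requires approximating elements of $\V\cap\wi\L^{r+1}$ by such combinations \emph{simultaneously} in the $\H^{1}$ and $\L^{r+1}$ norms. For $d=3$ with $r>5$ and for $d=4$ with $r>3$ there is no Sobolev embedding $\V\hookrightarrow\wi\L^{r+1}$, so this simultaneous approximation is nontrivial; in bounded domains it is precisely the Fefferman--Hajduk--Robinson eigenspace approximation, and the paper's entire Step (iii) is devoted to it: the damped projections $\mathrm{P}_{1/n}\u=\sum_{\lambda_j<n^2}e^{-\lambda_j/n}\langle\u,\boldsymbol{w}_j\rangle\boldsymbol{w}_j$ of \eqref{3.32}, bounded and convergent in both norms, combined with time mollification, are what yield \eqref{eqn44}--\eqref{eqn45}. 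Without this (or an equivalent device) both your identification $\limsup_{n}\int_0^T e^{-2\varrho t}\langle\G(\u^n),\u^n\rangle\d t\leq\int_0^T e^{-2\varrho t}\langle\G_0,\u\rangle\d t$ and your final upgrade from $\C_w([0,T];\H)$ to $\C([0,T];\H)$ are unsupported.

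A smaller deviation: you invoke Aubin--Lions to get $\u^n\to\u$ strongly in $\mathrm{L}^2(0,T;\H)$ and use it to pass to the limit in the $\varrho\|\cdot\|_{\H}^2$ terms. This is legitimate in a bounded domain, where $\V\hookrightarrow\H$ is compact, but the paper deliberately uses no compactness in the proof of Theorem \ref{main}, handling those terms instead through the limit energy equality \eqref{eqn45} and weak lower semicontinuity; as the paper remarks after Theorem \ref{main1}, this is what allows the same proof to work in unbounded domains. So even after the energy-equality gap is filled, your variant proves the theorem only where the embedding is compact, and it still needs a short justification of $\u^n(T)\rightharpoonup\u(T)$ in $\H$ before lower semicontinuity can be applied at the endpoint $t=T$.
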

\begin{proof}
		\noindent\textbf{Part (1). Existence:}  We prove the existence of a weak solution to the system \eqref{kvf}  in the following steps:
	\vskip 0.1in 
	\noindent\textbf{Step (i):} \emph{Finite-dimensional (Galerkin) approximation of the system (\ref{kvf}):}
	We consider the following system of ODEs in $\H_n$: 
	\begin{equation}\label{420}
	\left\{
	\begin{aligned}
	\frac{\d}{\d t}(\u^n(t),\v)+(\mu \mathrm{A}\u^n(t)+\mathrm{B}_n(\u^n(t))+\beta\mathcal{C}_n(\u^n(t)),\v)&=(\f^n(t),\v),\\
	(\u^n(0),\v)&=(\u_0^n,\v),
	\end{aligned}
	\right. 
	\end{equation}
	for any $\v \in\H_n$. Let us define the operator 
		\begin{align}\label{eqn-gn}
			\G_n(\cdot):=\mu\A_n+\B_n(\cdot)+\beta\mathcal{C}_n(\cdot).
		\end{align}
	 Then the system \eqref{420} can be rewritten as 
	\begin{equation}\label{421}
	\left\{
	\begin{aligned}
	\frac{\d}{\d t}(\u^n(t),\v)+(\G_n(\u^n(t)),\v)&=(\f^n(t),\v), \\
	(\u^n(0),\v)&=(\u_0^n,\v),
	\end{aligned}
	\right.
	\end{equation} 
	for any $\v\in\H_n$. Let us take the test function as $\u^n(\cdot)$ in \eqref{421} to derive the following energy equality: 
	\begin{align}\label{422}
	&	\|\u^n(t)\|_{\H}^2+2\int_0^t(\G_n(\u^n(s))-\f^n(s),\u^n(s))\d s=\|\u^n(0)\|_{\H}^2,
	\end{align}
	for any $t\in[0,T]$. Note that for $\varrho=\frac{r-3}{r-1}\left(\frac{2}{\beta\mu (r-1)}\right)^{\frac{2}{r-3}}$, with $r>3$, one can easily obtain the following identity:
		\begin{align}\label{eLd}
			\frac{\d}{\d t}\left(e^{-\varrho t}\u^n(t)\right)=
			e^{-\varrho t}\frac{\d\u^n(t)}{\d t}-\varrho e^{-\varrho t}\u^n(t),
		\end{align}
for a.e. $t\in[0,T]$.	By using \eqref{eLd}, we calculate
	\begin{align*}
		\frac{1}{2}\frac{\d}{\d t}\left(e^{-2\varrho t}\|\u^n(t)\|_{\H}^2\right)&= \bigg(\frac{\d}{\d t}\left(e^{-\varrho t}\u^n(t)\right),e^{-\varrho t}\u^n(t)\bigg) \nonumber\\&=
		e^{-2\varrho t}\bigg(\frac{\d\u^n(t)}{\d t},\u^n(t)\bigg)-\varrho e^{-2\varrho t} \|\u^n(t)\|_{\H}^2.
	\end{align*}
On integrating the above equality from $0$ to $t$ and using \eqref{421}, it can be easily shown that 
	\begin{align}\label{eqn42}
	&e^{-2\varrho t}\|\u^n(t)\|^2_{\H} +2\int_0^te^{-2\varrho s}(\G_n(\u^n(s))-\f^n(s)+\varrho\u^n(s),\u^n(s))\d
	s=\|\u^n(0)\|_{\H}^2,
	\end{align}
	for any $t\in[0,T]$.

	\vskip 0.1in
	\noindent\textbf{Step (ii):} \emph{Weak convergence of the sequences $\u^n(\cdot)$ and $\G_n(\u^n(\cdot))$:} We know that the dual of $\mathrm{L}^1(0,T;\H)$ is $\mathrm{L}^{\infty}(0,T;\H)$, that is,  $\left(\mathrm{L}^1(0,T;\H)\right)' \cong \mathrm{L}^{\infty}(0,T;\H)$ and the space $\mathrm{L}^1(0,T;\H)$ is separable. Moreover, the spaces $\mathrm{L}^2(0,T;\V)$ and $\mathrm{L}^{r+1}(0,T;\widetilde\L^{r+1})$ are reflexive.   Making use of the  energy estimate in  Proposition \ref{prop4.5} and the Banach-Alaoglu theorem (or Helly's theorem), we can extract subsequences $\{\u^{n_k}(\cdot)\}$ and $\{\G_n(\u^{n_k}(\cdot))\}$ such that  (for notational convenience, we use $\{\u^{n}(\cdot)\}$ and $\{\G_n(\u^{n}(\cdot))\}$)
	\begin{equation}\label{3.20}
	\left\{
	\begin{aligned}
	\u^n&\xrightarrow{w^*}\u\ \text{ in }\ \mathrm{L}^{\infty}(0,T;\H),\\
	\u^n&\xrightarrow{w}\u\ \text{ in }\ \mathrm{L}^{2}(0,T;\V),\\
		\u^n&\xrightarrow{w}\u\ \text{ in }\ \mathrm{L}^{r+1}(0,T;\widetilde\L^{r+1})),\\ 
	\G_n(\u^n)&\xrightarrow{w}\G_0\ \text{ in }\ \mathrm{L}^{2}(0,T;\V')+\mathrm{L}^{\frac{r+1}{r}}(0,T;\widetilde\L^{\frac{r+1}{r}}).
	\end{aligned}
	\right.\end{equation}
Note that for $r\geq 3$,	the final convergence in (\ref{3.20}) can be justified using H\"older's inequality, interpolation inequality \eqref{211} and \eqref{energy1} as follows: 
{\small{	\begin{align}\label{428}
&\left|\int_0^T\langle\G_n(\u^n(t)),\boldsymbol{\varphi}(t)\rangle\d t\right|\nonumber\\&\leq\mu \left|\int_0^T(\nabla\u^n(t),\nabla\boldsymbol{\varphi}(t))\d t\right|+\left|\int_0^T\langle \B(\u^n(t),\boldsymbol{\varphi}(t)),\u^n(t)\rangle\d t\right|\nonumber\\&\quad+\beta\left|\int_0^T\langle|\u^n(t)|^{r-1}\u^n(t),\boldsymbol{\varphi}(t)\rangle\d t\right|\nonumber\\&\leq\mu \int_0^T\|\nabla\u^n(t)\|_{\H}\|\nabla\boldsymbol{\varphi}(t)\|_{\H}\d t+\int_0^T\|\u^n(t)\|_{\widetilde\L^{r+1}}\|\u^n(t)\|_{\widetilde\L^{\frac{2(r+1)}{r-1}}}\|\boldsymbol{\varphi}(t)\|_{\V}\d t\nonumber\\&\quad+\beta\int_0^T\|\u^n(t)\|_{\widetilde\L^{r+1}}^{r}\|\boldsymbol{\varphi}(t)\|_{\widetilde\L^{r+1}}\d t\nonumber\\&\leq \left[\mu \left(\int_0^T\|\nabla\u^n(t)\|_{\H}^2\d t\right)^{1/2}+\left(\int_0^T\|\u^n(t)\|_{\widetilde\L^{r+1}}^{\frac{2(r+1)}{r-1}}\|\u^n(t)\|_{\H}^{\frac{2(r-3)}{r-1}}\d t\right)^{1/2}\right]\left(\int_0^T\|\boldsymbol{\varphi}(t)\|_{\V}^2\d t\right)^{1/2}\nonumber\\&\quad+\beta\left(\int_0^T\|\u^n(t)\|_{\widetilde\L^{r+1}}^{r+1}\d t\right)^{\frac{r}{r+1}}\left(\int_0^T\|\boldsymbol{\varphi}(t)\|_{\widetilde\L^{r+1}}^{r+1}\d t\right)^{\frac{1}{r+1}}\nonumber\\&\leq \left[\mu \left(\int_0^T\|\u^n(t)\|_{\V}^2\d t\right)^{1/2}+\left(\int_0^T\|\u^n(t)\|_{\widetilde\L^{r+1}}^{r+1}\d t\right)^{\frac{1}{r-1}}\left(\int_0^T\|\u^n(t)\|_{\H}^2\d t\right)^{\frac{r-3}{2(r-1)}}\right]\nonumber\\&\quad\times\left(\int_0^T\|\boldsymbol{\varphi}(t)\|_{\V}^2\d t\right)^{1/2}+\beta\left(\int_0^T\|\u^n(t)\|_{\widetilde\L^{r+1}}^{r+1}\d t\right)^{\frac{r}{r+1}}\left(\int_0^T\|\boldsymbol{\varphi}(t)\|_{\widetilde\L^{r+1}}^{r+1}\d t\right)^{\frac{1}{r+1}}\nonumber\\&\leq C(\|\u_0\|_{\H},\mu ,T,\beta,\|\f\|_{\mathrm{L}^2(0,T;\V')})\left[\left(\int_0^T\|\boldsymbol{\varphi}(t)\|_{\V}^2\d t\right)^{1/2}+\left(\int_0^T\|\boldsymbol{\varphi}(t)\|_{\widetilde\L^{r+1}}^{r+1}\d t\right)^{\frac{1}{r+1}}\right],
	\end{align}}}
for all $\boldsymbol{\varphi}\in\mathrm{L}^2(0,T;\V)\cap\mathrm{L}^{r+1}(0,T;\widetilde\L^{r+1}))$. Note also that 
	\begin{align*}
	&\left|\int_0^T\left\langle\frac{\d\u^n}{\d t},\boldsymbol{\varphi}(t)\right\rangle\d t\right|\nonumber\\&\leq \left|\int_0^T\langle\mathrm{G}_n(\u^n(t)),\boldsymbol{\varphi}(t)\rangle\d t\right|+ \left|\int_0^T\langle\f_n(t),\boldsymbol{\varphi}(t)\rangle\d t\right|\nonumber\\&\leq C(\|\u_0\|_{\H},\mu ,T,\beta,\|\f_n(t)\|_{\mathrm{L}^2(0,T;\V')})\left[\left(\int_0^T\|\boldsymbol{\varphi}(t)\|_{\V}^2\d t \right)^{1/2}+\left(\int_0^T\|\boldsymbol{\varphi}(t)\|_{\widetilde\L^{r+1}}^{r+1}\d t \right)^{\frac{1}{r+1}}\right]\nonumber\\&\quad +\left(\int_0^T\|\f_n(t)\|_{\V'}^2\d t\right)^{1/2}\left(\int_0^T\|\boldsymbol{\varphi}(t)\|_{\V}^2\d t\right)^{1/2},
	\end{align*}
for all $\boldsymbol{\varphi}\in\mathrm{L}^2(0,T;\V)\cap\mathrm{L}^{r+1}(0,T;\widetilde\L^{r+1}))$.  Thus, it is immediate that
\begin{align} \label{3p16}
	\frac{\d\u^n}{\d t}\xrightarrow{w}\frac{\d\u}{\d t}\ \text{ in } \ \mathrm{L}^2(0,T;\V')+\mathrm{L}^{\frac{r+1}{r}}(0,T;\widetilde\L^{\frac{r+1}{r}}).
\end{align}  Note that $\f^n\to\f$  in $\mathrm{L}^2(0,T;\V')$. 
Since the orthogonal projection $\mathrm{P}_n$ and $\A$ commute, one can easily see for all $\v\in\V$  that 
\begin{align*}
	\|\mathrm{P}_n\v-\v\|_{\V}=\|\A^{1/2}(\mathrm{P}_n\v-\v)\|_{\H}=\|\mathrm{P}_n\A^{1/2}\v-\A^{1/2}\v\|_{\H}\to 0\ \text{ as }\ n\to\infty. 
\end{align*}
  On passing  to limit $n\to\infty$ in (\ref{421}), the limit $\u(\cdot)$ satisfies:
  \begin{equation}\label{eqn43}
		\left\{
		\begin{aligned}
		-\left\langle{\frac{\d\u(t)}{\d t}},\v\right\rangle +\langle\G_0(t),\v\rangle   
		  &=\langle\f(t),\v\rangle,	\\
		\langle\u(0),\v(0)\rangle&=\langle \u_0,\v(0)\rangle,
		\end{aligned}
		\right.
		\end{equation}
	for all $\v\in\V$. In fact, using a density argument, one can also show that \eqref{eqn43} is true for all $\v\in\mathcal{V}_{\infty}$ also, where $\mathcal{V}_{\infty}$ is defined in \eqref{eqn-vinfty}. 
	\vskip 0.1in
	\noindent\textbf{Step (iii):} \emph{Energy equality satisfied by $\u(\cdot)$:} Let us now discuss the energy equality satisfied by $\u(\cdot)$.  It should be noted that such an energy equality is not immediate due to the final convergence in \eqref{3.20}. We follow the approximations given in \cite{CLF} to obtain such an energy equality. In \cite{CLF}, the authors established an approximation of $\u(\cdot)$ in bounded domains such that the approximations are bounded and converge in both Sobolev and Lebesgue spaces simultaneously (one can see \cite{KWH} for such an approximation of $\L^p$-space valued functions using truncated Fourier expansions in periodic domains). We approximate $\u(t),$ for each $t\in[0,T]$, by using the finite-dimensional space spanned by the first $n$ eigenfunctions of the Stokes operator as (\cite[Theorem 4.3]{CLF})
	\begin{align}
		\label{3.32}\u_n(t):=\mathrm{P}_{1/n}\u(t)=\sum_{\lambda_j<n^2}e^{-\lambda_j/n}\langle\u(t),\boldsymbol{w}_j\rangle \boldsymbol{w}_j.
	\end{align}
	For notational convenience, we use the approximations given in \eqref{3.32} as $\u_n(\cdot)$  and the Galerkin approximations  in Steps (i) and (ii) as $\u^n(\cdot)$. Note first that 
	\begin{align}\label{3.36}
	\|\u_n\|_{\H}^2=\|\mathrm{P}_{1/n}\u\|_{\H}^2=\sum_{\lambda_j<n^2}e^{-2\lambda_j/n}|\langle\u,\boldsymbol{w}_j\rangle|^2\leq\sum_{j=1}^{\infty}|\langle\u,\boldsymbol{w}_j\rangle|^2=\|\u\|_{\H}^2<+\infty,
	\end{align}
	for all $\u\in\H$. Moreover, we have 
	\begin{align}\label{3.37}
	\|(\I_d-\mathrm{P}_{1/n})\u\|_{\H}^2&=\|\u\|_{\H}^2-2\langle\u,\mathrm{P}_{1/n}\u\rangle+\|\mathrm{P}_{1/n}\u\|_{\H}^2\nonumber\\&=\sum_{j=1}^{\infty}|\langle\u,\boldsymbol{w}_j\rangle|^2-2\sum_{\lambda_j<n^2}e^{-\lambda_j/n}|\langle\u,\boldsymbol{w}_j\rangle|^2+\sum_{\lambda_j<n^2}e^{-2\lambda_j/n}|\langle\u,\boldsymbol{w}_j\rangle|^2\nonumber\\&=\sum_{\lambda_j<n^2}(1-e^{-\lambda_j/n})^2|\langle\u,\boldsymbol{w}_j\rangle|^2+\sum_{\lambda_j\geq n^2}|\langle\u,\boldsymbol{w}_j\rangle|^2,
	\end{align}
	for all $\u\in\H$. It should be noted that the final term on the right hand side of the equality \eqref{3.37} tends to zero as $n\to\infty$, since the series $\sum_{j=1}^{\infty}|\langle\u,\boldsymbol{w}_j\rangle|^2$ is convergent. The first term on the right hand side of the equality can be bounded from above by $$\sum_{j=1}^{\infty}(1-e^{-\lambda_j/n})^2|\langle\u,\boldsymbol{w}_j\rangle|^2\leq 4\sum_{j=1}^{\infty}|\langle\u,\boldsymbol{w}_j\rangle|^2=4\|\u\|_{\H}^2<+\infty.$$ Using the Dominated Convergence Theorem, one can interchange the limit and sum, and hence we obtain 
	$$\lim_{n\to\infty}\sum_{j=1}^{\infty}(1-e^{-\lambda_j/n})^2|\langle\u,\boldsymbol{w}_j\rangle|^2=\sum_{j=1}^{\infty}\lim_{n\to\infty}(1-e^{-\lambda_j/n})^2|\langle\u,\boldsymbol{w}_j\rangle|^2=0.$$
	Hence $\|(\I_d-\mathrm{P}_{1/n})\u\|_{\H}\to 0$ as $n\to\infty$. 
	Let us now discuss the properties of the approximation given in \eqref{3.32}. The authors in \cite{CLF} showed that such an approximation satisfies the following: 
	\begin{enumerate}
		\item [(1)] $\u_n(t)\in{\mathcal{V}}_{\infty}$ for all $t\in[0,T]$;
		\item [(2)] $\u_n(t)\to\u(t)$ in $\H_0^1(\mathcal{O})$ with $\|\u_n(t)\|_{\H^1}\leq C\|\u(t)\|_{\H^1}$ for all $t\in[0,T]$;
		\item [(3)] $\u_n(t)\to\u(t)$ in $\L^{p}(\mathcal{O})$ with $\|\u_n(t)\|_{{\L}^{p}}\leq C\|\u(t)\|_{{\L}^{p}}$, for any $p\in(1,\infty)$, for all  $t\in[0,T]$;
		\item [(4)] $\u_n(t)$ is divergence free and zero on $\partial\mathcal{O}$, for all  $t\in[0,T]$,
	\end{enumerate}
   It should be noted that for $2\leq d\leq 4$, $\D(\A)\hookrightarrow\H^2(\mathcal{O})\hookrightarrow\L^p(\mathcal{O}),$ for all $p\in(1,\infty)$ (cf. \cite{CLF}). Since $\boldsymbol{w}_j$'s are the eigenfunctions of the Stokes operator $\A$, we get $\boldsymbol{w}_j\in\D(\A)\hookrightarrow\V$ and $\boldsymbol{w}_j\in\D(\A)\hookrightarrow\widetilde{\L}^{r+1}$. We also need the fact \begin{align}\label{335}
 	\|\u_n-\u\|_{\mathrm{L}^{r+1}(0,T;\widetilde{\L}^{r+1})}\to 0, \ \text{ as }\ n\to\infty,
 	\end{align} 
 	which  follows from (2). Since $\u\in\mathrm{L}^{r+1}(0,T;\widetilde{\L}^{r+1})$ and the fact that $\|\u^n(t)-\u(t)\|_{\widetilde{\L}^{r+1}}\to 0$, for all $t\in[0,T]$, one can obtain the above convergence by an application of the dominated convergence theorem (with the dominating function $(1+C)\|\u(t)\|_{\widetilde{\L}^{r+1}}$).  Moreover, using the fact that $\u\in\mathrm{L}^{2}(0,T;\V)$, we also have 
 	\begin{align}\label{335a}
 	\|\u_n-\u\|_{\mathrm{L}^{2}(0,T;\V)}\to 0, \ \text{ as }\ n\to\infty,
 	\end{align}


  In order to prove the energy equality, we follow the works \cite{CLF,GGP,KWH}, etc. Let $\eta(t)$ be an even, positive, smooth function with compact support contained in the interval $(-1, 1)$, such that $\int_{-\infty}^{\infty}\eta(s)\d s=1$. Let us denote by $\eta^h$, a family of mollifiers related to the function $\eta$ as $$\eta^h(s):=h^{-1}\eta(s/h), \ \text{ for } \ h>0.$$ In particular, we get $\int_0^h\eta^h(s)\d s=\frac{1}{2}$. For any function $\v\in\mathrm{L}^p(0, T; \X)$, where $\X$ is a Banach space, $p\in[1,\infty)$, we define its mollification in time by $\v^h(\cdot)$ as 
 \begin{align*}
 \v^h(s):=(\v*\eta^h)(s)=\int_0^T\v(\tau)\eta^h(s-\tau)\d\tau, \ \text{ for }\ h\in(0,T). 
\end{align*}
From  \cite[Lemma 2.5, pp. 16]{GGP}, we know that this mollification has the following properties.  For any $\v\in\mathrm{L}^p(0, T; \X)$, $\v^h\in\C^k([0,T);\X)$ for all $k\geq 0$ and  $$\lim_{h\to 0}\|\v^h-\v\|_{\mathrm{L}^p(0,T;\X)}=0.$$ Furthermore, if $\{\v_n\}_{n=1}^{\infty}$ converges to $\v$ in $\mathrm{L}^p(0,T;\X)$, then $$\lim_{n\to \infty}\|\v^h_n-\v^h\|_{\mathrm{L}^p(0,T;\X)}=0.$$   

Let $\{\u_n\}_{n\geq1}$ be a sequence of test functions in $\mathcal{V}_T$ converging to a weak solution $\u$ of \eqref{kvf}. As in \cite{CLF,KWH}, for any fixed time $t_1\in(0,T)$, choose a sequence of test functions 
 \begin{align*}
 \u_n^h(t):=\int_0^{t_1}\eta^h(t-s)\u_n(s)\d s,
 \end{align*}
with the parameter $h$ satisfying  $0<h<T-t_1$ and $h<t_1,$  where $\eta_h$ is the even mollifier given above. Since $\u_n^h\in{\mathcal{V}}_{\infty}$, one can choose $\u_n^h$ as test function in \eqref{eqn43} to find 
 \begin{align}\label{3.23}
 -\int_0^{t_1}\bigg\langle\u(s),\frac{\d\u_n^h(s)}{\d t}\bigg\rangle\d s+ \int_0^{t_1}\langle\G_0(s)-\f(s),\u_n^h(s)\rangle\d s=-\langle\u(t),\u_n^h(t)\rangle +\langle\u(0),\u_n^h(0)\rangle.
 \end{align}
 Note that 
 \begin{align*}
 	\left\|\frac{\d\u_n^h(s)}{\d t}-\frac{\d\u^h(s)}{\d t}\right\|_{\H}&=\left\|\int_0^{t_1}\dot{\eta}^h(t-s)(\u_n^h(s)-\u_n(s))\d s\right\|_{\H}\nonumber\\&\leq\int_0^{t_1}\dot{\eta}^h(t-s)\|\u_n^h(s)-\u_n(s)\|_{\H}\d s\ \to 0\ \text{ as }\ n\to\infty,
 \end{align*}
where $\dot{\eta}^h$ derivative of $\eta^h$ with respect to $t$, so that 
\begin{align}\label{3.24}
		-\int_0^{t_1}\bigg\langle\u(s),\frac{\d\u_n^h(s)}{\d t}\bigg\rangle\d s\to 	-\int_0^{t_1}\bigg\langle\u(s),\frac{\d\u^h(s)}{\d t}\bigg\rangle\d s\ \text{ as } \ n\to\infty. 
\end{align}
 For $\G_0=\G_0^1+\G_0^2$ with $\G_0^1\in\mathrm{L}^2(0,T;\V')$ and $\G_0^2\in\mathrm{L}^{\frac{r+1}{r}}(0,T;\wi\L^{\frac{r+1}{r}})$, we have 
 \begin{align}\label{3.25}
& \left|\int_0^{t_1}\langle\G_0(s),\u_n^h(s)-\u^h(s)\rangle\d s\right|\nonumber\\&\leq \left|\int_0^{t_1}\langle\G_0^1(s),\u_n^h(s)-\u^h(s)\rangle\d s\right|+\left|\int_0^{t_1}\langle\G_0^2(s),\u_n^h(s)-\u^h(s)\rangle\d s\right|\nonumber\\&\leq \int_0^{t_1}\|\G_0^1(s)\|_{\V'}\|\u_n^h(s)-\u^h(s)\|_{\V}\d s+\int_0^{t_1}\|\G_0^2(s)\|_{\widetilde\L^{\frac{r+1}{r}}}\|\u_n^h(s)-\u^h(s)\|_{\widetilde\L^{r+1}}\d s\nonumber\\&\leq \left(\int_0^{t_1}\|\G_0^1(s)\|_{\V'}^2\d s\right)^{1/2} \left(\int_0^{t_1}\|\u_n^h(s)-\u^h(s)\|_{\V}^2\d s\right)^{1/2} \nonumber\\&\quad+\left(\int_0^{t_1}\|\G_0^2(s)\|_{\widetilde\L^{\frac{r+1}{r}}}^{\frac{r+1}{r}}\right)^{\frac{r}{r+1}}\left(\int_0^{t_1}\|\u_n^h(s)-\u^h(s)\|_{\widetilde\L^{r+1}}^{r+1}\right)^{\frac{1}{r+1}}\to 0 \ \text{ as }\ n\to\infty. 
 \end{align}
 Moreover, we get 
 \begin{align}\label{325}
 \left|\int_0^{t_1}\langle\f(s),\u_n^h(s)-\u^h(s)\rangle \d s\right|&\leq\int_0^{t_1}\|\f(s)\|_{\V'}\|\u_n^h(s)-\u^h(s)\|_{\V}\d s\nonumber\\&\leq \left(\int_0^{t_1}\|\f(s)\|_{\V'}^2\d s\right)^{1/2}\left(\int_0^{t_1}\|\u_n^h(s)-\u^h(s)\|_{\V}^2\d s\right)^{1/2}\nonumber\\&\to 0 \ \text{ as }\ n\to\infty. 
 \end{align}
 Using the convergences \eqref{3.24}-\eqref{325} in \eqref{3.23}, we deduce 
  \begin{align}\label{3.26}
 -\int_0^{t_1}\bigg\langle\u(s),\frac{\d\u^h(s)}{\d t}\bigg\rangle\d s+ \int_0^{t_1}\langle\G_0(s)-\f(s),\u^h(s)\rangle\d s= -\langle\u(t),\u^h(t)\rangle +\langle\u(0),\u^h(0)\rangle.
 \end{align}
 Since the function $\eta^h$ is even in $(-h, h)$, we have  $\dot{\eta}^h(r)=-\dot{\eta}^h(-r)$ and hence (see \cite[pp. 7154]{KWH}) 
 \begin{align}
 \int_0^{t_1}\bigg\langle\u(s),\frac{\d\u^h(s)}{\d t}\bigg\rangle\d s= \int_0^{t_1}\int_0^{t_1}\dot{\eta}^h(t-s)\langle\u(t),\u(s)\rangle\d t\d s=0. 
 \end{align}
 Thus, from \eqref{3.26}, it is immediate that 
 \begin{align}
 \int_0^{t_1}\langle\G_0(s)-\f(s),\u^h(s)\rangle\d s=-\langle\u(t_1),\u^h(t_1)\rangle +\langle\u(0),\u^h(0)\rangle.
 \end{align}
 Now, letting $h\to 0$, and arguing similarly as in \eqref{3.25} and \eqref{325}, we find 
 \begin{align*}
 \lim_{h\to 0} \int_0^{t_1}\langle\G_0(s),\u^h(s)\rangle\d s= \int_0^{t_1}\langle\G_0(s),\u(s)\rangle\d s,
 \end{align*}
 and 
  \begin{align*}
 \lim_{h\to 0} \int_0^{t_1}\langle\f(s),\u^h(s)\rangle\d s= \int_0^{t_1}\langle\f(s),\u(s)\rangle\d s.
 \end{align*}
 The above convergence gives us 
 \begin{align}
 \int_0^{t_1}\langle\G_0(s)-\f(s),\u(s)\rangle\d s=-\lim_{h\to 0}\langle\u(t_1),\u^h(t_1)\rangle +\lim_{h\to 0}\langle\u(0),\u^h(0)\rangle.
 \end{align}
 Using the fact that $\u$ is $\mathrm{L}^2$-weakly continuous in time and $\int_0^h\eta^h(s)\d s=\frac{1}{2}$, we get 
 \begin{align}
 \langle\u(t_1),\u^h(t_1)\rangle &=\int_0^{t_1}\eta^h(s)\langle\u(t_1),\u(t_1-s)\rangle\d s\nonumber\\&=\frac{1}{2}\|\u(t_1)\|_{\H}^2+\int_0^h\eta^h(s)\langle\u(t_1),\u(t_1-s)-\u(t_1)\rangle\d s\to \frac{1}{2}\|\u(t_1)\|_{\H}^2,
 \end{align}
as $h\to 0$. Similarly, we obtain 
\begin{align}
\langle\u(0),\u^h(0)\rangle\to\frac{1}{2}\|\u(0)\|_{\H}^2\ \text{ as } \ h\to 0. 
\end{align}
Combining the above convergences, we finally obtain the energy equality 
\begin{align}
\frac{1}{2}\|\u(t_1)\|_{\H}^2+\int_0^{t_1}\langle\G_0(s)-\f(s),\u(s)\rangle\d s=\frac{1}{2}\|\u(0)\|_{\H}^2,
\end{align}
for all $t_1\in(0,T)$. Thus the system (\ref{eqn43}) satisfies the following energy equality:
	\begin{align}\label{eqn44}
	\|\u(t)\|_{\H}^2+2\int_0^t\langle \G_0(s)-\f(s),\u(s)\rangle \d
	s=\|\u(0)\|_{\H}^2,
	\end{align}
	for any $t\in (0,T)$. Moreover, we have the following equality:
	\begin{align}\label{eqn45}
	&e^{-2\varrho t}\|\u(t)\|_{\H}^2+2\int_0^te^{-2\varrho t}\langle \G_0(s)-\f(s)+\varrho\u(s),\u(s)\rangle \d
	s=\|\u(0)\|_{\H}^2,
	\end{align}
	for all $t\in(0,T]$.

\vskip 0.1in
\noindent\textbf{Step (iv).} \emph{Minty-Browder technique:} Remember that $\u^n(0)=\mathrm{P}_n\u(0)$, and hence the initial	value $\u^n(0)$ converges to $\u(0)$ strongly in $\H$, that is, we have 
\begin{align}\label{eqn46}
\lim_{n\to\infty}\|\u^n(0)-\u(0)\|_{\H}=0.
\end{align}	For any
$\v\in\mathrm{L}^{\infty}(0,T;\H_m)$ with
$m<n$, using the monotonicity property (see \eqref{fe}), we obtain 
\begin{align}\label{eqn48}
&\int_0^{T}e^{-2\varrho t}\left\{\langle \G(\v(t))-\G(\u^n(t)),\v(t)-\u^n(t)\rangle 
+\varrho\left(\v(t)-\u^n(t),\v(t)-\u^n(t)\right)\right\}\d
t\geq 0.
\end{align}
Applying the energy equality (\ref{eqn42}) in \eqref{eqn48}, we find
\begin{align}\label{eqn49}
&\int_0^{T}e^{-2\varrho t}\langle \G(\v(t))+\varrho\v(t),\v(t)-\u^n(t)\rangle \d
t\nonumber\\&\geq
\int_0^{T}e^{-2\varrho t}\langle \G(\u^n(t))+\varrho\u^n(t),\v(t)-\u^n(t)\rangle \d
t\nonumber\\&=\int_0^{T}e^{-2\varrho t}
\langle \G(\u^n(t))+\varrho\u^n(t),\v(t)\rangle \d t
\nonumber\\&\quad+\frac{1}{2}\Big(e^{-2\varrho T}\|\u^n(T)\|_{\H}^2-\|\u^n(0)\|_{\H}^2\Big)
-\int_0^{T}e^{-2\varrho t}\langle \f^n(t),\u^n(t)\rangle \d t.
\end{align}
Taking limit infimum on both sides of (\ref{eqn49}), we further have 
\begin{align}\label{eqn52}
&\int_0^{T}e^{-2\varrho t}\langle \G(\v(t))+\varrho\v(t),\v(t)-\u(t)\rangle \d
t\nonumber\\&\geq \int_0^{T}e^{-2\varrho t}
\langle \G_0(t)+\varrho\u(t),\v(t)\rangle \d
t\nonumber\\&\quad+\frac{1}{2}\liminf_{n\to\infty}\Big(e^{-2\varrho T}\|\u^n(T)\|_{\H}^2-\|\u^n(0)\|_{\H}^2\Big)
-\int_0^{T}e^{-2\varrho t}\langle \f(t),\u(t)\rangle \d t.
\end{align}
Making use of the lower semicontinuity property of the $\H$-norm, and the strong convergence of the initial data (see \eqref{eqn46}), we get 
\begin{align}\label{eqn53}
&\liminf_{n\to\infty}\Big\{e^{-2\varrho T}\|\u^n(T)\|_{\H}^2-\|\u^n(0)\|_{\H}^2\Big\}\geq
e^{-2\varrho T}\|\u(T)\|^2_{\H}-\|\u(0)\|^2_{\H}.
\end{align}
Applying  (\ref{eqn53}) and the energy equality (\ref{eqn45}) in (\ref{eqn52}), we deduce that 
\begin{align}\label{eqn55}
&\int_0^{T}e^{-2\varrho t}\langle \G(\v(t))+\varrho\v(t),\v(t)-\u(t)\rangle \d
t\nonumber\\&\geq \int_0^{T}e^{-2\varrho t}
\langle \G_0(t)+\varrho\u(t),\v(t)\rangle \d
t+\frac{1}{2}\Big(	e^{-2\varrho T}\|\u(T)\|^2_{\H}-\|\u(0)\|^2_{\H}\Big)
-\int_0^{T}e^{-2\varrho t}\langle \f(t),\u(t)\rangle \d
t\nonumber\\&=\int_0^{T}e^{-2\varrho t}
\langle \G_0(t)+\varrho\u(t),\v(t)\rangle \d
t-\int_0^{T}e^{-2\varrho t}\langle \G_0(t)+\varrho\u(t),\u(t)\rangle \d
t\nonumber\\&=\int_0^{T}e^{-2\varrho t}
\langle \G_0(t)+\varrho\u(t),\v(t)-\u(t)\rangle \d t.
\end{align}
Note that the estimate (\ref{eqn55}) holds true for any
$\v\in\mathrm{L}^{\infty}(0,T;\H_m)$, $m\in\mathbb{N}$, since the  inequality given in (\ref{eqn55}) is
independent of both $m$ and $n$. Using a density argument, one can show that the inequality (\ref{eqn55}) remains true for any
$$\v\in\mathrm{L}^{\infty}(0,T;\H)\cap\mathrm{L}^2(0,T;\V)\cap\mathrm{L}^{r+1}(0, T ; \widetilde\L^{r+1})).$$ In fact, for any
$\v\in\mathrm{L}^{\infty}(0,T;\H)\cap\mathrm{L}^2(0,T;\V)\cap\mathrm{L}^{r+1}(0, T ; \widetilde\L^{r+1})),$ there	exists a strongly convergent subsequence
$\v_m\in\mathrm{L}^{\infty}(0,T;\H)\cap\mathrm{L}^2(0,T;\V)\cap\mathrm{L}^{r+1}(0, T ; \widetilde\L^{r+1})),$ that
satisfies the inequality in  (\ref{eqn55}).

Taking $\v=\u+\lambda\w$, $\lambda>0$, where
$\w\in\mathrm{L}^{\infty}(0,T;\H)\cap\mathrm{L}^2(0,T;\V)\cap\mathrm{L}^{r+1}(0, T ; \widetilde\L^{r+1})),$ and
substituting for $\v$ in (\ref{eqn55}), we obtain 
\begin{align}\label{eqn56}
\int_0^{T}e^{-2\varrho t}\langle \G(\u(t)+\lambda\w(t))-\G_0(t)+\varrho\lambda\w(t),\lambda\w(t)\rangle \d
t\geq 0.
\end{align}
Dividing the inequality in (\ref{eqn56}) by $\lambda$, using the
hemicontinuity property of the operator $\G(\cdot)$ (see Lemma \ref{lem2.8}), and then passing to the limit with $\lambda\to 0$, we get 
\begin{align}\label{eqn60}
\int_0^{T}e^{-2\varrho t}\langle \G(\u(t))-\G_0(t),\w(t)\rangle \d
t\geq 0,
\end{align}
for any
$\w\in\mathrm{L}^{\infty}(0,T;\H)\cap\mathrm{L}^2(0,T;\V)\cap\mathrm{L}^{r+1}(0, T ; \widetilde\L^{r+1})).$
It should be noted that the term
${\int_0^{T}}e^{-2\varrho t} \varrho\lambda\langle\w(t),\w(t)\rangle \d
t$ in \eqref{eqn56} tends to $0$ as $\lambda\to0$. Now, changing $\w$ to $-\w$ in \eqref{eqn60}, we obtain
\begin{align}\label{eqn600}
	\int_0^{T} e^{-2\varrho t}\langle \G(\u(t))-\G_0(t),\w(t)\rangle \d t\leq  0,
\end{align}
so that 
\begin{align}\label{eqn-6000}
	\int_0^{T} e^{-2\varrho t}\langle \G(\u(t))-\G_0(t),\w(t)\rangle \d t=  0,
\end{align}
for any
$\w\in\mathrm{L}^{\infty}(0,T;\H)\cap\mathrm{L}^2(0,T;\V)\cap\mathrm{L}^{r+1}(0,T;\widetilde\L^{r+1})).$ Since $e^{-2\varrho T}\leq e^{-2\varrho t}\leq 1$ and $\w$ is arbitrary, we deduce 
$$\G(\u(t))=\G_0(t), \ \text{ for a.e.}\ t\in[0,T].$$
 One can easily see the following energy equality satisfied by $\u(\cdot)$, by making use of \eqref{eqn44}: 
\begin{align}\label{3.47}
\|\u(t)\|_{\H}^2+2\mu\int_0^t\|\u(s)\|_{\V}^2\d s+ 2\beta\int_0^t\|\u(s)\|_{\widetilde\L^{r+1}}^{r+1}\d s=\|\u_0\|_{\H}^2+\int_0^t\langle\f(s),\u(s)\rangle\d s,
\end{align}
for all $t\in[0,T]$. Moreover, $\u(\cdot)$ satisfies 
\begin{align}\label{energy3}
\sup_{t\in[0,T]}\|\u(t)\|_{\H}^2+\mu \int_0^T\|\u(t)\|_{\V}^2\d t+2\beta\int_0^T\|\u(t)\|_{\widetilde\L^{r+1}}^{r+1}\d t\leq \|\u_0\|_{\H}^2+\frac{1}{4\mu }\int_0^T\|\f(t)\|_{\V'}^2\d t.
\end{align}
Hence, we get  $$\u\in\mathrm{L}^{\infty}(0,T;\H)\cap\mathrm{L}^2(0,T;\V)\cap\mathrm{L}^{r+1}(0, T ; \widetilde\L^{r+1})),$$ with $\partial_t\u\in\mathrm{L}^2(0,T;\V')+\mathrm{L}^{\frac{r+1}{r}}(0,T;\widetilde\L^{\frac{r+1}{r}})$.

Finally, the continuity of $\u:[0,T]\to\H$, as discussed in \cite{CLF,KWH}, follows by combining the weak continuity of $\u$ into $\H$ and the continuity of $t\mapsto\|\u(t)\|_{\H}^2$, which is a consequence of the energy equality \eqref{3.47}.

\vskip 0.2cm

\noindent\textbf{Part (2). Uniqueness:} Let $\u_1(\cdot)$ and $\u_2(\cdot)$ be two weak solutions of the system (\ref{kvf}) satisfying \eqref{energy3}. Then $\u_1(\cdot)-\u_2(\cdot)$ satisfies: 	
\begin{align}\label{346}
&\|\u_1(t)-\u_2(t)\|_{\H}^2+2\mu \int_0^t\|\u_1(s)-\u_2(s)\|_{\V}^2\d s\nonumber\\&=\|\u_1(0)-\u_2(0)\|_{\H}^2-2\int_0^t\langle\B(\u_1(s))-\mathrm{B}(\u_2(s)),\u_1(s)-\u_2(s)\rangle\d s\nonumber\\&\quad -2\beta\int_0^t\langle\mathcal{C}(\u_1(s))-\mathcal{C}_2(\u_2(s)),\u_1(s)-\u_2(s)\rangle\d s.
\end{align}
Note that $\langle\B(\u_1)-\B(\u_2),\u_1-\u_2\rangle=\langle\B(\u_1-\u_2,\u_2),\u_1-\u_2\rangle,$ since $\langle\B(\u_1,\u_1-\u_2),\u_1-\u_2\rangle=0$. 
An estimate similar to \eqref{2.30} yields
\begin{align}\label{347}
&|\langle\B(\u_1-\u_2,\u_2),\u_1-\u_2\rangle|\leq\frac{\mu }{2}\|\u_1-\u_2\|_{\V}^2+\frac{\beta}{2}\||\u_2|^{\frac{r-1}{2}}(\u_1-\u_2)\|_{\H}^2+\varrho\|\u_1-\u_2\|_{\H}^2,
\end{align}
for $r>3$. A calculation similar to \eqref{2.23} gives 
\begin{align}\label{261}
&\beta\langle\mathcal{C}(\u_1)-\mathcal{C}(\u_2),\u_1-\u_2\rangle\geq \frac{\beta}{2}\||\u_2|^{\frac{r-1}{2}}(\u_1-\u_2)\|_{\H}^2.
\end{align}
Using (\ref{347}) and (\ref{261}) in \eqref{346}, we arrive at 
\begin{align}\label{3.18}
&\|\u_1(t)-\u_2(t)\|_{\H}^2+\mu \int_0^t\|\u_1(s)-\u_2(s)\|_{\V}^2\d s\nonumber\\&\leq\|\u_1(0)-\u_2(0)\|_{\H}^2+2\varrho\int_0^t\|\u_1(s)-\u_2(s)\|_{\H}^2\d s.
\end{align}
Applying Gronwall's inequality in (\ref{3.18}), we get  
\begin{align}\label{269}
\|\u_1(t)-\u_2(t)\|_{\H}^2&\leq \|\u_1(0)-\u_2(0)\|_{\H}^2e^{2\varrho T},
\end{align}
and hence the uniqueness follows by taking $\u_1(0)=\u_2(0)$ in \eqref{269}.
\end{proof}
Let us now sketch the proof of the existence and uniqueness of weak solution for the system \eqref{kvf} with $r=3,$ for $2\beta\mu \geq 1$. This means that when both the viscosity of a fluid and the porosity of a porous medium are sufficiently large, then the corresponding flow stays bounded and regular. When the viscosity is small, one can still obtain a unique weak solution by letting the porosity to be large and vice versa. In fact, in Theorem \ref{main3} below, we show that a unique weak solution exists for any $\beta,\mu>0$. But the continuous dependence on the initial data is till known to exists for $2\beta\mu \geq 1$ (see Remark \ref{rem-con} below).

\begin{theorem}\label{main1}
	Let $\u_0\in \H$ and $\f\in\mathrm{L}^{2}(0,T;\V')$  be given.  Then, for $2\beta\mu \geq 1$, there exists a unique weak solution to the system (\ref{kvf}) satisfying $$\u\in\mathrm{C}([0,T];\H)\cap\mathrm{L}^2(0,T;\V)\cap\mathrm{L}^{4}(0,T;\L^{4}(\mathcal{O})).$$ 
\end{theorem}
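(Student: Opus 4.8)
The plan is to follow the scheme of Theorem \ref{main} essentially verbatim, with one decisive simplification: for the critical exponent $r=3$ together with $2\beta\mu\geq 1$, Theorem \ref{thm2.3} gives \emph{global} monotonicity of $\G(\cdot)$ on $\V\cap\wi\L^{4}$, so the constant $\varrho$ that governed the $r>3$ analysis may be taken to be zero. Consequently the exponential weight $e^{-2\varrho t}$ threaded through Step (iv) of Theorem \ref{main} disappears, and every weighted energy identity collapses to its unweighted form.

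First I would set up the Faedo--Galerkin approximation \eqref{420}--\eqref{421} with $r=3$ and observe that the a-priori bound of Proposition \ref{prop4.5} is valid for every $r\in[1,\infty)$, hence in particular for $r=3$; this furnishes a uniform bound for $\{\u^n\}$ in $\mathrm{L}^{\infty}(0,T;\H)\cap\mathrm{L}^{2}(0,T;\V)\cap\mathrm{L}^{4}(0,T;\wi\L^{4})$. By the Banach--Alaoglu theorem I would extract (without relabelling) subsequences with $\u^n\xrightarrow{w^*}\u$ in $\mathrm{L}^{\infty}(0,T;\H)$, $\u^n\xrightarrow{w}\u$ in $\mathrm{L}^{2}(0,T;\V)$ and in $\mathrm{L}^{4}(0,T;\wi\L^{4})$, and $\G_n(\u^n)\xrightarrow{w}\G_0$ in $\mathrm{L}^{2}(0,T;\V')+\mathrm{L}^{4/3}(0,T;\wi\L^{4/3})$, the last convergence being justified exactly as in \eqref{428} specialised to $r=3$. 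Passing to the limit in \eqref{421} then yields the limit identity \eqref{eqn43} with this $\G_0$.

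The main obstacle, precisely the difficulty that makes the critical case delicate, is to establish the energy equality for $\u$ despite the merely weak convergence of $\G_n(\u^n)$. I would resolve it exactly as in Step (iii) of Theorem \ref{main}, using the eigenspace approximation $\u_n=\mathrm{P}_{1/n}\u$ of \cite{CLF} given in \eqref{3.32}, which converges to $\u$ simultaneously in $\mathrm{L}^{2}(0,T;\V)$ and in $\mathrm{L}^{4}(0,T;\wi\L^{4})$ while remaining uniformly bounded; after mollifying in time and passing to the limits $n\to\infty$ and then $h\to 0$, the time-derivative term vanishes because $\eta^h$ is even and $\int_0^h\eta^h(s)\d s=\tfrac12$, and the weak continuity of $\u$ into $\H$ delivers
\begin{align*}
\|\u(t)\|_{\H}^2+2\int_0^t\langle\G_0(s)-\f(s),\u(s)\rangle\d s=\|\u(0)\|_{\H}^2,
\end{align*}
for every $t\in(0,T)$.

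With $\varrho=0$ the Minty--Browder step becomes clean. For $\v\in\mathrm{L}^{\infty}(0,T;\H_m)$ the global monotonicity \eqref{218} gives $\int_0^{T}\langle\G(\v)-\G(\u^n),\v-\u^n\rangle\d t\geq 0$; inserting the (unweighted) Galerkin energy equality \eqref{422}, taking $\liminf_{n\to\infty}$, and combining the weak lower semicontinuity of the $\H$-norm with the limit energy equality, I would obtain $\int_0^{T}\langle\G(\v)-\G_0,\v-\u\rangle\d t\geq 0$ for all such $\v$, and then, by density, for all $\v\in\mathrm{L}^{\infty}(0,T;\H)\cap\mathrm{L}^{2}(0,T;\V)\cap\mathrm{L}^{4}(0,T;\wi\L^{4})$. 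Taking $\v=\u+\lambda\w$, dividing by $\lambda>0$ and letting $\lambda\to 0$ via the hemicontinuity of $\G$ (Lemma \ref{lem2.8}), then replacing $\w$ by $-\w$, forces $\G(\u)=\G_0$ a.e., which identifies the limit and produces the energy equality \eqref{3.47} with $r=3$; the continuity $\u\in\C([0,T];\H)$ follows from weak continuity into $\H$ together with continuity of $t\mapsto\|\u(t)\|_{\H}^2$. Finally, for uniqueness I would subtract the energy equalities of two solutions $\u_1,\u_2$: since the operator is globally monotone when $2\beta\mu\geq 1$, the convective and absorption contributions combine non-negatively exactly as in the computation \eqref{231}--\eqref{232}, so that $\|\u_1(t)-\u_2(t)\|_{\H}^2\leq\|\u_1(0)-\u_2(0)\|_{\H}^2$, which yields both uniqueness and continuous dependence on the initial data without any appeal to Gr\"onwall's inequality.
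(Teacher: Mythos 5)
Your proposal is correct and takes essentially the same route as the paper: Galerkin approximation with the a-priori bound of Proposition \ref{prop4.5}, the energy equality for the limit obtained via the eigenspace approximation of \cite{CLF}, the Minty--Browder identification of $\G_0$ using the global monotonicity of Theorem \ref{thm2.3} (so that $\varrho=0$ and the exponential weight disappears), and uniqueness via the estimates \eqref{231}--\eqref{232} without Gr\"onwall. The paper merely compresses the Minty--Browder step by taking a limit supremum directly in the Galerkin energy equality \eqref{422} and then invoking monotonicity, which is the same computation you carry out with $\liminf$ and weak lower semicontinuity.
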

\begin{proof}
One can follow in a similar way as in the proof of Theorem \ref{main}. In the critical case $r=3$, note that the operator $\G(\cdot)$ is monotone (see \eqref{218}), and hence we provide a short proof in this case. Using the convergences given in \eqref{3.20}, we take limit supremum in \eqref{422} to find 
\begin{align}\label{263}
&\limsup_{n\to\infty}\int_0^t(\G(\u^n(s)),\u^n(s))\d s\nonumber\\&=\limsup_{n\to\infty}\left\{\frac{1}{2}\left[\|\u^n(0)\|_{\H}^2-\|\u^n(t)\|_{\H}^2\right]+\int_0^t(\f^n(s),\u^n(s))\d s\right\}\nonumber\\&\leq \frac{1}{2}\left[\limsup_{n\to\infty}\|\u^n(0)\|_{\H}^2-\liminf_{n\to\infty}\|\u^n(t)\|_{\H}^2\right]+\limsup_{n\to\infty}\int_0^t(\f^n(s),\u^n(s))\d s\nonumber\\&\leq\frac{1}{2}\left[\|\u_0\|_{\H}^2-\|\u(t)\|_{\H}^2\right]+\int_0^t\langle\f(s),\u(s)\rangle\d s=\int_0^t\langle \G_0(s),\u(s)\rangle \d
s,
\end{align}
for all $t\in(0,T]$, where we used \eqref{eqn44}, the weak lower-semicontinuity property of the $\H$-norm, and strong convergence of the initial data and forcing. Since $\G(\cdot)$ is a monotone  operator, from \eqref{218}, we also have 
\begin{align*}
\int_0^t\langle\G(\u^n(s))-\G(\v(s)),\u^n(s)-\v(s)\rangle\d s\geq 0. 
\end{align*}
Taking limit supremum on both sides and using \eqref{263}, we further find 
\begin{align}
\int_0^t\langle \G_0(s)-\G(\v(s)),\u(s)-\v(s)\rangle \d
s\geq 0. 
\end{align}
Taking $\u-\v=\lambda\w$, for $\lambda>0$, dividing by $\lambda$, passing to the limit with $\lambda\to 0$, and then exploiting the hemicontinuity property of the operator $\G(\cdot)$, we finally obtain $\G_0(t)=\G(\u(t))$. Uniqueness follows by taking the estimate \eqref{232} instead of \eqref{347} and following similarly as in the Part (2) of Theorem \ref{main}. 
\end{proof}

\begin{remark}
	We point out here  that the results obtained in Theorems \ref{main} and \ref{main1} hold true in unbounded domains also as we are not using any compactness arguments to get the required results. As discussed in \cite[Section 2.5]{KKMTM}, in the unbounded domain case, one needs to replace the eigenfunctions of the Stokes operator  in Step (iii) in the proof of Theorem \ref{main} by the eigenfunctions of the operator defined in  \cite[Section 2.5]{KKMTM}. 
\end{remark}

\begin{theorem}\label{main2}
	Let $\u_0\in \H$ and $\f\in\mathrm{L}^{2}(0,T;\V')$  be given. For $2\leq d\leq4$ and $r\in[1,3]$,   there exists a weak solution to the system (\ref{kvf}) with $\u\in\mathrm{L}^{\infty}(0,T;\H)\cap\mathrm{L}^2(0,T;\V)$ satisfying the following energy inequality: 
	\begin{align}\label{ener-ine}
		\|\u(t)\|_{\H}^2+2\mu\int_0^t\|\u(s)\|_{\V}^2\d s+ 2\beta\int_0^t\|\u(s)\|_{\widetilde\L^{r+1}}^{r+1}\d s\leq \|\u_0\|_{\H}^2+\int_0^t\langle\f(s),\u(s)\rangle\d s,
	\end{align}
	for all $t\in[0,T]$, and the initial data is satisfied in the following sense:
	\begin{align}\label{360}
		\lim_{t\to 0}\|\u(t)-\u_0\|_{\H}=0.
	\end{align}
	
	  Moreover, for $d=2$ and $r\in[1,3]$, the energy inequality becomes equality and the weak solution is unique. 
\end{theorem}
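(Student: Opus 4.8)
The plan is to abandon the Minty--Browder/monotonicity route used in Theorems \ref{main} and \ref{main1}, since for $r\in[1,3]$ (without the constraint $2\beta\mu\geq1$) the operator $\G(\cdot)$ need not be monotone, and instead to construct the solution by a compactness argument applied to the Galerkin scheme \eqref{appxode1}. First I would record the uniform bounds: the a-priori estimate of Proposition \ref{prop4.5} holds for every $r\in[1,\infty)$, so $\{\u^n\}$ is bounded in $\mathrm{L}^{\infty}(0,T;\H)\cap\mathrm{L}^2(0,T;\V)\cap\mathrm{L}^{r+1}(0,T;\widetilde\L^{r+1})$, yielding the weak convergences of \eqref{3.20}. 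To upgrade these to a convergence strong enough to treat the nonlinearities, I would bound the time derivative. Writing $\frac{\d\u^n}{\d t}=-\mu\A_n\u^n-\B_n(\u^n)-\beta\mathcal{C}_n(\u^n)+\f^n$, the linear term is bounded in $\mathrm{L}^2(0,T;\V')$, the absorption term $\mathcal{C}_n(\u^n)$ in $\mathrm{L}^{\frac{r+1}{r}}(0,T;\widetilde\L^{\frac{r+1}{r}})$, and the convective term $\B_n(\u^n)$ in $\mathrm{L}^{1}(0,T;\V')$ (with better integrability when $d\leq3$), using $\|\B(\u)\|_{\V'}\leq\|\u\|_{\widetilde\L^{4}}^2$ together with the embedding $\V\hookrightarrow\widetilde\L^4$ for $d=4$ and the Ladyzhenskaya/Gagliardo--Nirenberg interpolation for $d=2,3$. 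Hence $\frac{\d\u^n}{\d t}$ is bounded in $\mathrm{L}^{1}(0,T;\V'+\widetilde\L^{\frac{r+1}{r}})$, and, exploiting the compact embedding $\V\hookrightarrow\H$, the Aubin--Lions--Simon compactness lemma (which accommodates an $\mathrm{L}^1$ time-derivative bound) yields a subsequence with $\u^n\to\u$ strongly in $\mathrm{L}^2(0,T;\H)$ and a.e.\ in $\mathcal{O}\times(0,T)$.

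With this in hand I would pass to the limit. The strong $\mathrm{L}^2(0,T;\H)$-convergence combined with the weak $\mathrm{L}^2(0,T;\V)$-convergence identifies the convective limit via $b(\u^n,\v,\u^n)\to b(\u,\v,\u)$ for $\v\in\mathcal{V}_T$, while the a.e.\ convergence together with the uniform $\mathrm{L}^{\frac{r+1}{r}}$-bound and a standard weak-limit identification (see, e.g., \cite{JLLEM}) gives $\mathcal{C}(\u^n)=|\u^n|^{r-1}\u^n\rightharpoonup|\u|^{r-1}\u$. Thus $\u$ is a weak solution in the sense of Definition \ref{weakd} with $\u\in\mathrm{L}^{\infty}(0,T;\H)\cap\mathrm{L}^2(0,T;\V)$. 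The energy inequality \eqref{ener-ine} then follows by taking $\liminf_{n\to\infty}$ in the Galerkin energy identity \eqref{415} and invoking the weak lower semicontinuity of the $\H$-, $\mathrm{L}^2(0,T;\V)$- and $\mathrm{L}^{r+1}(0,T;\widetilde\L^{r+1})$-norms, the right-hand side converging by the strong convergence of $\u_0^n$ and $\f^n$. The attainment \eqref{360} of the initial datum follows from the weak continuity $\u\in\C_w([0,T];\H)$, which gives $\liminf_{t\to0}\|\u(t)\|_{\H}\geq\|\u_0\|_{\H}$, combined with $\limsup_{t\to0}\|\u(t)\|_{\H}\leq\|\u_0\|_{\H}$ read off from \eqref{ener-ine}; together these force strong convergence.

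For $d=2$ I would exploit the Ladyzhenskaya inequality $\|\u\|_{\widetilde\L^4}^2\leq C\|\u\|_{\H}\|\u\|_{\V}$. This places $\B(\u)\in\mathrm{L}^2(0,T;\V')$ and, together with $\mathcal{C}(\u)\in\mathrm{L}^{\frac{r+1}{r}}(0,T;\widetilde\L^{\frac{r+1}{r}})$, shows that $\frac{\d\u}{\d t}$ lies in the space dual to the one containing $\u$, so $\u$ is an admissible test function and the Lions--Magenes-type identity $\frac{\d}{\d t}\|\u\|_{\H}^2=2\langle\frac{\d\u}{\d t},\u\rangle$ promotes \eqref{ener-ine} to an equality. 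For uniqueness I set $\w=\u_1-\u_2$; using $\langle\mathcal{C}(\u_1)-\mathcal{C}(\u_2),\w\rangle\geq0$ from \eqref{2pp11} and $\langle\B(\u_1)-\B(\u_2),\w\rangle=b(\w,\u_2,\w)$, I would estimate $|b(\w,\u_2,\w)|\leq\|\w\|_{\widetilde\L^4}^2\|\u_2\|_{\V}\leq C\|\w\|_{\H}\|\w\|_{\V}\|\u_2\|_{\V}\leq\frac{\mu}{2}\|\w\|_{\V}^2+\frac{C}{2\mu}\|\u_2\|_{\V}^2\|\w\|_{\H}^2$ and close with the Gr\"onwall inequality (Lemma \ref{Gronw}), since $\|\u_2\|_{\V}^2\in\mathrm{L}^1(0,T)$.

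The main obstacle is the passage to the limit in the nonlinear convective term in the absence of monotonicity: this forces the compactness route, and securing the uniform time-derivative bound for $\B_n(\u^n)$ simultaneously for all $r\in[1,3]$ and $2\leq d\leq4$ (where the natural $\widetilde\L^{r+1}$-control of $\B$ degenerates as $r\downarrow1$, so one must fall back on the $\mathrm{L}^2(0,T;\V)$-bound and Sobolev embeddings) is the delicate point. It is also precisely the reason why, for $d=3,4$, weak lower semicontinuity yields only an energy \emph{inequality} rather than the equality available in the two-dimensional case.
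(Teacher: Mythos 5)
Your proposal is correct and follows essentially the same route as the paper's proof: Galerkin approximation with the $r$-independent a priori bounds of Proposition \ref{prop4.5}, a time-derivative bound feeding an Aubin--Lions compactness argument to get strong $\mathrm{L}^2(0,T;\H)$ and a.e.\ convergence, identification of the weak limit of $|\u^n|^{r-1}\u^n$ via the a.e.\ convergence and uniform $\mathrm{L}^{\frac{r+1}{r}}$-bound (the paper's citation for this is Lions \cite[Lemma 1.3]{JLF} rather than \cite{JLLEM}), weak lower semicontinuity for the energy inequality, the $\limsup$/$\liminf$ argument with $\C_w([0,T];\H)$ for attainment of the initial datum, and Ladyzhenskaya-based arguments in $d=2$. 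The only cosmetic deviations are that for the 2D energy equality the paper invokes \cite[Theorem 4.1]{GGP} through $\u\in\mathrm{L}^4(0,T;\wi\L^4)$ where you argue directly via the duality identity, and in the 2D uniqueness step the paper's Gr\"onwall weight is $\|\u_2\|_{\wi\L^4}^4$ rather than your $\|\u_2\|_{\V}^2$ --- both integrable, both valid.
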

\begin{proof}
For the case $2\leq d\leq4$ and $r\in[1,3]$, one can obtain the existence of a weak solution by using compactness arguments.  Using the convergences given in \eqref{3.20} and \eqref{3p16}, and the fact that the embedding $\V\hookrightarrow\H$ is compact,  an application of Aubin-Lions compactness lemma yields 
\begin{align}\label{3p58}
		\u^n&\rightarrow\u\ \text{ in }\ \mathrm{L}^{2}(0,T;\H),
\end{align}
along a further subsequence still denoted by the same symbol. The above convergence  implies further along a subsequence (still denoted by the same symbol) that 
\begin{align}\label{3p61}
	\u^n(t,x)\to \u(t,x)\ \text{ for a.e. }\ (t,x)\in(0,T)\times\Omega. 
\end{align}
The weak convergence 	$\u^n\xrightarrow{w}\u \text{ in } \mathrm{L}^{2}(0,T;\V)$ and the energy estimate \eqref{energy1} imply that 
\begin{align}\label{3p60}
&	\left|\int_0^T\langle\A_n\u_n(t),\mathrm{P}_n\v(t)\rangle\d t-\int_0^T\langle\A\u(t),\v(t)\rangle\d t\right|\nonumber\\&\leq\left|\int_0^T\langle\A\u_n(t),\mathrm{P}_n\v(t)-\v(t)\rangle\d t\right|+\left|\int_0^T\langle\A\u_n(t)-\A\u(t),\v(t)\rangle\d t\right|\nonumber\\&\leq\|\mathrm{P}_n-\I_d\|_{\mathcal{L}(\V)}\left(\int_0^T\|\u_n(t)\|_{\V}^2\d t\right)^{1/2}\left(\int_0^T\|\v(t)\|_{\V}^2\d t\right)^{1/2}+\left|\int_0^T\langle\A\u_n(t)-\A\u(t),\v(t)\rangle\d t\right|\nonumber\\&\to\ \text{ as }\ n\to\infty, 
\end{align}
for all $\v\in\mathrm{L}^{2}(0,T;\V)$. For $\boldsymbol{\psi}\in \C([0,T];\V)$, using Ladyzhenskaya's and H\"older's inequalities, the convergence \eqref{3p58} and the estimate \eqref{energy1},   we have 
\begin{align}
	&	\left|\int_0^T\langle\B_n(\u_n(t)),\boldsymbol{\psi}(t)\rangle\d t-\int_0^T\langle\B(\u(t)),\boldsymbol{\psi}(t)\rangle\d t\right|
		\nonumber\\&\leq\left|\int_0^T\langle\B(\u_n(t)-\u(t),\u_n(t)),\boldsymbol{\psi}(t)\rangle\d t\right|+\left|\int_0^T\langle\B(\u(t),\u_n(t)-\u(t)),\boldsymbol{\psi}(t)\rangle\d t\right| 
	\nonumber\\&=\left|\int_0^T\langle\B(\u_n(t)-\u(t),\boldsymbol{\psi}(t)),\u_n(t)\rangle\d t\right|+\left|\int_0^T\langle\B(\u(t),\boldsymbol{\psi}(t)),\u_n(t)-\u(t)\rangle\d t\right| \nonumber\\&\leq CT^{\frac{4-d}{8}}\sup_{t\in[0,T]}\|\boldsymbol{\psi}(t)\|_{\V}\left(\sup_{t\in[0,T]}\left[\|\u_n(t)\|_{\H}^{\frac{4-d}{4}}+\|\u_n(t)\|_{\H}^{\frac{4-d}{4}}\right]\right)\nonumber\\&\qquad\times\left[\int_0^T\left(\|\u_n(t)\|_{\V}^2+\|\u(t)\|_{\V}^2\right)\d t\right]^{\frac{d}{4}}\left(\int_0^T\|\u_n(t)-u(t)\|_{\H}^2\d t\right)^{\frac{4-d}{8}}\nonumber\\&\to 0\ \text{ as }\ n\to\infty. 
\end{align}
Since $\mathrm{L}^{\frac{4}{4-d}}(0,T;\V)$ is dense in $\C([0,T];\V)$, for any given $\v\in\mathrm{L}^{\frac{4}{4-d}}(0,T;\V)$ and $\var>0$,  there exists a sequence $\boldsymbol{\psi}_{\varepsilon}\in\C([0,T];\V)$ such that $$\|\boldsymbol{\psi}_{\varepsilon}-\v\|_{\mathrm{L}^{\frac{4}{4-d}}(0,T;\V)}\leq \var.$$ Let us now consider  
\begin{align*}
	&	\left|\int_0^T\langle\B_n(\u_n(t)),\mathrm{P}_n\v(t)\rangle\d t-\int_0^T\langle\B(\u(t)),\v(t)\rangle\d t\right|\nonumber\\&\leq\left|\int_0^T\langle\B(\u_n(t)),\mathrm{P}_n\v(t)-\v(t)\rangle\d t\right|+\left|\int_0^T\langle\B(\u_n(t))-\B(\u(t)),\v(t)-\boldsymbol{\psi}_{\varepsilon}(t)\rangle\d t\right|\nonumber\\&\quad+\left|\int_0^T\langle\B(\u_n(t))-\B(\u(t)),\boldsymbol{\psi}_{\varepsilon}(t)\rangle\d t\right| \nonumber\\&\leq\|\mathrm{P}_n-\I_d\|_{\mathcal{L}(\V)}\left(\int_0^T\|\u_n(t)\|_{\wi\L^{4}}^{\frac{8}{d}}\d t\right)^{\frac{d}{4}}\left(\int_0^T\|\v(t)\|_{\V}^{\frac{4}{4-d}}\d t\right)^{\frac{4-d}{4}}\nonumber\\&\quad+C\left[\int_0^T\left(\|\u_n(t)\|_{\wi\L^4}^{\frac{8}{d}}+\|\u(t)\|_{\wi\L^4}^{\frac{8}{d}}\right)\d t\right]^{\frac{d}{4}}\left(\int_0^T\|\v(t)-\boldsymbol{\psi}_{\varepsilon}(t)\|_{\V}^{\frac{4}{4-d}}\d t\right)^{\frac{4-d}{4}}\nonumber\\&\quad+\left|\int_0^T\langle\B(\u_n(t))-\B(\u(t)),\boldsymbol{\psi}_{\varepsilon}(t)\rangle\d t\right|
	\nonumber\\&\leq C\left[\int_0^T\left(\|\u_n(t)\|_{\wi\L^4}^{\frac{8}{d}}+\|\u(t)\|_{\wi\L^4}^{\frac{8}{d}}\right)\d t\right]^{\frac{d}{4}}\left(\int_0^T\|\v(t)-\boldsymbol{\psi}_{\varepsilon}(t)\|_{\V}^{\frac{4}{4-d}}\d t\right)^{\frac{4-d}{4}}\  \text{ as } n\to\infty,
\end{align*}
Since an application of Ladyzheskaya's inequality and the energy estimate \eqref{energy1} yield
\begin{align*}
	\int_0^T\|\u_n(t)\|_{\wi\L^4}^{\frac{8}{d}}\d t\leq C\sup_{t\in[0,T]}\|\u_n(t)\|_{\H}^{\frac{2(4-d)}{d}}\int_0^T\|\u_n(t)\|_{\V}^2\d t\leq \left\{ \|\u_0\|_{\H}^2+\frac{1}{\mu }\int_0^T\|\f(t)\|_{\V'}^2\d t\right\}^{\frac{4}{d}},
\end{align*}
so that 
\begin{align*}
	&	\left|\int_0^T\langle\B_n(\u_n(t)),\mathrm{P}_n\v(t)\rangle\d t-\int_0^T\langle\B(\u(t)),\v(t)\rangle\d t\right|\nonumber\\&\leq C\left\{ \|\u_0\|_{\H}^2+\frac{1}{\mu }\int_0^T\|\f(t)\|_{\V'}^2\d t\right\}\var.
	\end{align*}
	Since $\var>0$ is arbitrary, we deduce that 
	\begin{align}\label{3p62}
		\lim_{n\to\infty}\int_0^T\langle\B_n(\u_n(t)),\mathrm{P}_n\v(t)\rangle\d t=\int_0^T\langle\B(\u(t)),\v(t)\rangle\d t. 
	\end{align}
We know from \eqref{energy1} that 
\begin{align}\label{3p57}
	\int_0^T	\|\mathcal{C}_n(\u_n(t)\|_{\wi\L^{\frac{r+1}{r}}}^{\frac{r+1}{r}}dt\leq \int_0^T\|\u_n(t)\|_{\wi\L^{r+1}}^{r+1}\d t\leq  \|\u_0\|_{\H}^2+\frac{1}{\mu }\int_0^T\|\f(t)\|_{\V'}^2\d t.
\end{align}
Therefore, by an application of the Banach-Alaoglu theorem yields the existence of a $\boldsymbol{\xi}\in\mathrm{L}^{\frac{r+1}{r}}(0,T;\wi\L^{\frac{r+1}{r}})$ such that 
\begin{align}
	\mathcal{C}_n(\u_n)\xrightarrow{w} \boldsymbol{\xi}\ \text{ in }\ \mathrm{L}^{\frac{r+1}{r}}(0,T;\wi\L^{\frac{r+1}{r}}) .
\end{align}
Note that from \eqref{3p57}, we have $\{|\u_n(t)|^{r-1}\u_n(t)\}_{n\in\N}, |\u(t)|^{r-1}\u(t) \in \mathrm{L}^{\frac{r+1}{r}}((0,T)\times \Omega)$ and 
\[\|{ |\u_n(t)|^{r-1}\u_n(t)}\|_{\mathrm{L}^{\frac{r+1}{r}}((0,T)\times \Omega)} \leq    \left(\|\u_0\|_{\H}^2+\frac{1}{\mu }\int_0^T\|\f(t)\|_{\V'}^2\d t\right)^{\frac{1}{r+1}},\]
and \eqref{3p61} implies $|\u_{n}(t)|^{r-1}\u_{n}(t) \to |\u(t)|^{r-1}\u(t)$ a.e. in $(0,T)\times \Omega$. According to \cite[Lemma 1.3, Chapter 1, pp. 12]{JLF}, we deduce
\begin{align*}
	|\u_{n}(t)|^{r-1}\u_{n}(t) \xrightarrow{w} |\u(t)|^{r-1}\u(t) \  \mbox{ in } \  \mathrm{L}^{\frac{r+1}{r}}((0,T)\times \Omega).
\end{align*} 
Thus, by the uniqueness of weak limits, we infer that $\boldsymbol{\xi}=\mathcal{P} (|\u|^{r-1}\u)$. It implies that
\begin{align}\label{3p65}
	\mathcal{C}_n(\u_{n}) \xrightarrow{w} \mathcal{C}(\u) \  \mbox{ in } \  \mathrm{L}^{\frac{r+1}{r}}(0,T;\wi\L^{\frac{r+1}{r}}).
\end{align}
Using the convergences \eqref{3p60}, \eqref{3p62} and \eqref{3p65}, one easily obtains $\G_n(\u_n(t))\to \G(\u(t)),$   for a.e. $t\in[0,T]$ in $\V'+\wi\L^{\frac{r+1}{r}}$, where $\G_n(\u_n)$ is defined in \eqref{eqn-gn}. Note that for $d=3$, $1\leq r\leq 3$ implies $2\leq \frac{r+1}{r}\leq\frac{4}{3}$ and $\V\cap\wi\L^{r+1}\cong\V$. Since $\u\in\mathrm{L}^{\infty}(0,T;\H)\cap\mathrm{L}^2(0,T;\V)$ and $\frac{\d\u}{\d t}\in \mathrm{L}^{\frac{4}{d}}(0,T;\V')$ (see \eqref{3p16}), we immediately have $\u\in\mathrm{W}^{1,\frac{4}{d}}(0,T;\V')$ so that an application of \cite[Theorem 2, pp. 302]{LCE} yields $\u\in\C([0,T];\V')$. Since $\H$ is reflexive, the embedding $\H\hookrightarrow\V'$ is continuous and $\u\in\mathrm{L}^{\infty}(0,T;\H)\cap\C([0,T];\V')$, an application of \cite[Lemma 8.1, Chapter 3, pp. 275]{JLLEM} yields $\u\in\C_w([0,T];\H)$, that is, 
\begin{align}
	\lim_{t\to t_0}(\u(t)-\u(t_0),\v)=0\ \text{ for all }\ \v\in\H. 
\end{align} 

By the definition of $\u_0^n\to\u_0$ in $\H$, $\f_n\to\f$ in $\mathrm{L}^2(0,T;\V')$ and $\u^n\to\u$ in $\mathrm{L}^2(0,T;\H)$, we deduce
\begin{align}
	\lim_{n\to\infty}\left\{2\int_0^t\langle\f_n(s),\u^n(s)\rangle\d s+\|\u_0^n\|_{\H}^2\right\}=2\int_0^t\langle\f(s),\u(s)\rangle\d s+\|\u_0\|_{\H}^2. 
\end{align}
 Since the norm is weakly semi-continuous, $\u^n\xrightarrow{w}\u\in\mathrm{L}^{\infty}(0,T;\H)\cap\mathrm{L}^2(0,T;\V)\cap\mathrm{L}^{r+1}(0,T;\wi\L^{r+1})$ implies 
 \begin{align*}
 &	\liminf_{n\to\infty}\left\{\|\u^n(t)\|_{\H}^2+2\mu\int_0^t\|\u^n(s)\|_{\V}^2\d s+ 2\beta\int_0^t\|\u^n(s)\|_{\widetilde\L^{r+1}}^{r+1}\d s\right\}\nonumber\\&\geq \|\u(t)\|_{\H}^2+2\mu\int_0^t\|\u(s)\|_{\V}^2\d s+ 2\beta\int_0^t\|\u(s)\|_{\widetilde\L^{r+1}}^{r+1}\d s,
 \end{align*}
 and the energy inequality \eqref{ener-ine} follows. On the other hand, energy inequality immediately gives
 \begin{align*}
 	\limsup_{t\to 0}\|\u(t)\|_{\H}^2\leq\|\u_0\|_{\H}^2,
 	\end{align*}
 	whereas $\u\in\C_w([0,T];\H)$ provides 
 	\begin{align*}
 			\liminf_{t\to 0}\|\u(t)\|_{\H}^2\geq\|\u_0\|_{\H}^2,
 	\end{align*}
 	The above two relations imply 
 		\begin{align*}
 		\lim_{t\to 0}\|\u(t)\|_{\H}^2=\|\u_0\|_{\H}^2.
 	\end{align*}
 	This relation together with the $\H$-weak continuity of $\u$ allows us to conclude that \eqref{360} holds. 
 	
 	For $d=2$, the fact that $\u\in\mathrm{L}^{\infty}(0,T;\H)\cap\mathrm{L}^2(0,T;\V)$ and an application of Ladyzhenskaya inequality yield $\u\in\mathrm{L}^4(0,T;\wi\L^4)$. This fact together with \cite[Theorem 4.1, pp. 23]{GGP} imply $\u$  verifies the energy equality \eqref{3.47}, so that $\u\in\C([0,T];\H)$ also. In order to prove the uniqueness of weak solutions, we estimate $\langle\B(\u_1-\u_2,\u_2),\u_1-\u_2\rangle$ from \eqref{346} as
 	\begin{align}\label{3711}
 		|\langle\B(\u_1-\u_2,\u_2),\u_1-\u_2\rangle|&=|\langle\B(\u_1-\u_2,\u_1-\u_2),\u_2\rangle|\nonumber\\&\leq\|\u_2\|_{\wi\L^4}\|\u_1-\u_2\|_{\V}\|\u_1-\u_2\|_{\wi\L^4}\nonumber\\&\leq 2^{1/4}\|\u_2\|_{\wi\L^4}\|\u_1-\u_2\|_{\V}^{3/2}\|\u_1-\u_2\|_{\H}^{1/2}\nonumber\\&\leq\frac{\mu}{2}\|\u_1-\u_2\|_{\V}^2+\frac{27}{32\mu^3}\|\u_2\|_{\wi\L^4}^4\|\u_1-\u_2\|_{\H}^2. 
 	\end{align}
 	Using \eqref{3711} in \eqref{346}, we arrive at 
 	\begin{align}\label{372}
 		&\|\u_1(t)-\u_2(t)\|_{\H}^2+\mu \int_0^t\|\u_1(s)-\u_2(s)\|_{\V}^2\d s+\frac{\beta}{2^{r-1}}\int_0^t\|\u_1(s)-\u_2(s)\|_{\wi\L^{r+1}}^{r+1}\d s\nonumber\\&\leq\|\u_1(0)-\u_2(0)\|_{\H}^2+\frac{27}{16\mu^3}\int_0^t\|\u_2(s)\|_{\wi\L^4}^4\|\u_1(s)-\u_2(s)\|_{\H}^2\d s.
 	\end{align}
 	An application of Gr\"onwall's inequality in \eqref{372} yields 
 	\begin{align}
 		\|\u_1(t)-\u_2(t)\|_{\H}^2\leq \|\u_1(0)-\u_2(0)\|_{\H}^2\exp\left\{\frac{27}{16\mu^3}\int_0^T\|\u_2(t)\|_{\wi\L^4}^4\d t\right\}, 
 	\end{align}
 	and the uniqueness $\u_1(t)=\u_2(t)$ for all $t\in[0,T]$ in $\H$ follows since  $\u\in\mathrm{L}^4(0,T;\wi\L^4)$. 
\end{proof}

\begin{theorem}\label{main3}
	For $d=r=3$, 	let $\u_0\in \H$ and $\f\in\mathrm{L}^{2}(0,T;\V')$  be given.  Then, the weak solution obtained in Theorem \ref{main} is unique. 
\end{theorem}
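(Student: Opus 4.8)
The plan is to mimic the uniqueness argument of Theorem \ref{main} (and its critical refinement in Theorem \ref{main1}), but to squeeze out of the damping term the extra coercivity that is available \emph{only} at the critical pair $d=r=3$. Let $\u_1,\u_2$ be two weak solutions of \eqref{kvf} emanating from the same datum $\u_0$. Since $d=r=3$, both satisfy the energy \emph{equality} \eqref{33} (recall the discussion after \eqref{33}, via \cite{CLF}), so the difference $\w:=\u_1-\u_2$ obeys the exact integrated identity
\begin{align*}
\|\w(t)\|_{\H}^2+2\mu\int_0^t\|\w(s)\|_{\V}^2\d s+2\beta\int_0^t\langle\mathcal{C}(\u_1(s))-\mathcal{C}(\u_2(s)),\w(s)\rangle\d s=2\int_0^t\langle\B(\w(s),\w(s)),\u_2(s)\rangle\d s,
\end{align*}
with $\w(0)=\mathbf{0}$, where I have used the cancellation $\langle\B(\u_1)-\B(\u_2),\w\rangle=-\langle\B(\w,\w),\u_2\rangle$ from \eqref{441}. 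Everything then rests on how tightly one can play the convective term $\langle\B(\w,\w),\u_2\rangle$ against the dissipation produced on the left.

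The key step is to open the damping term with the \emph{equality} \eqref{224} rather than the one-sided bound used for Theorem \ref{main1}. For $r=3$ this reads
\begin{align*}
\langle\mathcal{C}(\u_1)-\mathcal{C}(\u_2),\w\rangle=\tfrac12\||\u_1|\w\|_{\H}^2+\tfrac12\||\u_2|\w\|_{\H}^2+\tfrac12\big\||\u_1|^2-|\u_2|^2\big\|_{\H}^2,
\end{align*}
and, combined with the pointwise bound $|\u_1|^2+|\u_2|^2\ge\tfrac12|\w|^2$, it furnishes the decisive quartic dissipation
\begin{align*}
\tfrac12\||\u_1|\w\|_{\H}^2+\tfrac12\||\u_2|\w\|_{\H}^2\ge\tfrac14\|\w\|_{\wi\L^{4}}^4,
\end{align*}
a gain that has no analogue in the general monotonicity discussion. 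For the convective term I would retain the sharp bound $|\langle\B(\w,\w),\u_2\rangle|\le\|\w\|_{\V}\,\||\u_2|\w\|_{\H}$ and split the gradient factor against the full viscous dissipation $2\mu\|\w\|_{\V}^2$ by Young's inequality, leaving a surplus proportional to $\||\u_2|\w\|_{\H}^2$.

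If $2\beta\mu\ge1$ this closes immediately: the residual multiple of $\||\u_2|\w\|_{\H}^2$ carries a non-negative coefficient, every remaining term is non-negative, and one reads off $\w\equiv\mathbf{0}$ (this reproves Theorem \ref{main1} and even yields the stability estimate \eqref{269}). The genuine content of the theorem is the complementary regime $2\beta\mu<1$, and this is exactly the step I expect to be the main obstacle. There Young's inequality leaves a surplus $\big(\tfrac{1}{2\mu}-\beta\big)\int_0^t\||\u_2|\w\|_{\H}^2\d s$ on the right, which I would try to reabsorb using the quartic dissipation above together with the integrability $\u_2\in\mathrm{L}^4(0,T;\wi\L^{4})$ guaranteed by \eqref{energy3}. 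The structural difficulty is that the convective term, the viscous dissipation and the damping are all of the same $\w$-degree, so any reduction of this surplus to a pointwise-in-time Gronwall term $g(t)\|\w\|_{\H}^2$ forces, through the three-dimensional interpolation $\|\w\|_{\wi\L^4}\le C\|\w\|_{\H}^{1/4}\|\w\|_{\V}^{3/4}$, the weight $g=\|\u_2\|_{\wi\L^4}^{8}$, which is \emph{not} integrable under \eqref{energy3}; this is precisely why a quantitative continuous-dependence bound eludes the method when $2\beta\mu<1$, whereas bare uniqueness can still be extracted. Concretely, the plan for this regime is to abandon a differential Gronwall and work on the integrated identity, controlling the surplus by the exact dissipation quartet — in particular keeping the usually-discarded term $\tfrac{\beta}{2}\||\u_1|^2-|\u_2|^2\|_{\H}^2$ — and then invoking the absolute continuity of $t\mapsto\int_0^t\|\u_2\|_{\wi\L^4}^4\d s$ to run a local-in-time absorption that propagates by finitely many restarts to all of $[0,T]$; the delicate point, and the one requiring the most care, is to perform this absorption so that no $\u_2$-only remainder survives, i.e.\ so that every term generated retains a factor of $\w$, which is the feature that separates the critical case from the routine estimate \eqref{232}.
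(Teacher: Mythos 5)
Your proposal closes only the case $2\beta\mu\geq 1$, which is already Theorem \ref{main1}; the entire content of Theorem \ref{main3} is the complementary regime $2\beta\mu<1$, and there the plan you sketch cannot be completed --- the obstruction is structural (homogeneity in $\w$), not a matter of more careful bookkeeping. After the convective term absorbs the viscous dissipation, the surplus $\left(\tfrac{1}{2\mu}-\beta\right)\int_0^t\||\u_2|\w\|_{\H}^2\d s$ is \emph{quadratic} in $\w=\u_1-\u_2$, whereas the extra dissipation you propose to use against it is \emph{quartic}: the term $\tfrac{\beta}{2}\int_0^t\|\w(s)\|_{\wi\L^{4}}^4\d s$, and also the ``usually-discarded'' term $\tfrac{\beta}{2}\||\u_1|^2-|\u_2|^2\|_{\H}^2$, which is no better, since at points where $\u_2\cdot\w=0$ one has $|\u_1|^2-|\u_2|^2=|\w|^2$, so this term degenerates to $\tfrac{\beta}{2}\|\w\|_{\wi\L^{4}}^4$ while $\||\u_2|\w\|_{\H}^2$ remains uncontrolled. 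Near $\w=\mathbf{0}$ --- the only regime relevant for uniqueness --- quartic quantities are negligible against quadratic ones, so any Young-type splitting of the surplus against the quartic dissipation necessarily leaves a remainder with no factor of $\w$ (of the type $C\|\u_2\|_{\wi\L^{4}}^4$). Quantitatively, the most favorable implementation of your integrated-identity plan (Cauchy--Schwarz in time) gives $\|\w(t)\|_{\H}^2+\tfrac{\beta}{2}X(t)^2\leq C E(t)^{1/2}X(t)$ with $X(t)^2=\int_0^t\|\w\|_{\wi\L^{4}}^4\d s$ and $E(t)=\int_0^t\|\u_2\|_{\wi\L^{4}}^4\d s$, whence only $\|\w(t)\|_{\H}^2\leq C' E(t)$ --- a continuity-type bound, not zero; restarting on small subintervals merely reproduces this non-contracting bound and never forces $\w\equiv\mathbf{0}$. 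The Gronwall alternative requires $\u_2\in\mathrm{L}^8(0,T;\wi\L^{4})$, which, as you correctly observe, is not available. So the proposal identifies the obstacle honestly but contains no mechanism that overcomes it.

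The paper's proof of Theorem \ref{main3} is built on a different idea that your attempt is missing: it never estimates $\langle\B(\w,\w),\u_2\rangle$ at all. Given a weak solution $\u$, one introduces the auxiliary problem \eqref{eqn-ch1} in which the transport velocity is frozen at $\u$, namely $\partial_t\v+\mu\A\v+\B(\u,\v)+\beta\mathcal{C}(\v)=\f$, $\v(0)=\u_0$. For differences of solutions of \emph{this} problem the convective contribution is $\B(\u,\v_1-\v_2)$, which is skew ($\langle\B(\u,\z),\z\rangle=0$ by \eqref{b0}) and hence vanishes in the energy identity, while the damping difference is monotone by \eqref{2.23}; this yields the uniqueness estimate \eqref{378} for the auxiliary problem for \emph{every} $\beta,\mu>0$, with no interplay between the two coefficients. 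Since a weak solution $\u$ of \eqref{kvf} solves its own auxiliary problem (because $\B(\u,\u)=\B(\u)$), it must coincide with that problem's unique solution, and the paper deduces uniqueness for \eqref{kvf} from uniqueness for \eqref{eqn-ch1}. The decisive step you lack is exactly this linearization of the drift: it trades the quadratic term $\B(\w,\w)$ tested against $\u_2$ --- the term that forces a $\mu$--$\beta$ balance in any direct energy argument --- for a skew term that costs nothing.
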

\begin{proof}
	Let $\u\in\mathrm{L}^{\infty}(0,T;\H)\cap\mathrm{L}^2(0,T;\V)\cap\mathrm{L}^{4}(0,T;\wi\L^4)$ be a weak solution of the problem \eqref{kvf}. 	Let us now consider the following problem: 
	\begin{equation}\label{eqn-ch}
		\left\{
		\begin{aligned}
			\frac{\partial \v}{\partial t}-\mu \Delta\v+(\u\cdot\nabla)\v+\beta|\v|^{2}\v+\nabla q&=\boldsymbol{f}, \ \text{ in } \ \mathcal{O}\times(0,T), \\ \nabla\cdot\v&=0, \ \text{ in } \ \mathcal{O}\times(0,T), \\
			\v&=\mathbf{0}\ \text{ on } \ \partial\mathcal{O}\times(0,T), \\
			\v(0)&=\u_0 \ \text{ in } \ \mathcal{O}. 
		\end{aligned}
		\right.
	\end{equation}
	One taking orthogonal projection $\mathcal{P}$ onto the equation \eqref{eqn-ch}, we obtain  for all $t\in(0,T)$
	\begin{equation}\label{eqn-ch1}
		\left\{
		\begin{aligned}
			\frac{\d\v(t)}{\d t}+\mu \A\v(t)+\B(\u(t),\v(t))+ \beta\mathcal{C}(\v(t))&=\f(t),\\
			\v(0)&=\u_0\in\H,
		\end{aligned}
		\right.
	\end{equation}
	where $\B(\v,\u)=\mathcal{P}[(\v\cdot\nabla)\u]$ and $\mathcal{C}(\v)=\mathcal{P}(|\v|^2\v)$. For simplicity of notation, we used $\f$ as $\mathcal{P}\f$ again. Since $\langle\B(\u,\v),\v\rangle=0$, $\u_0\in\H$ and $\f\in\mathrm{L}^{2}(0,T;\V')$, the existence of a unique weak solution $\v\in\mathrm{L}^{\infty}(0,T;\H)\cap\mathrm{L}^2(0,T;\V)\cap\mathrm{L}^{4}(0,T;\wi\L^4)$ can be proved in a similar way as in the proof of Theorem \ref{main} or \ref{main2}. Moreover, calculations similar to Step (iii) in the proof of Theorem \ref{main} yield $\v(\cdot)$ satisfies the following energy equality: 
	\begin{align}
	\|\v(t)\|_{\H}^2+2\mu\int_0^t\|\v(s)\|_{\V}^2\d s+ 2\beta\int_0^t\|\v(s)\|_{\widetilde\L^{r+1}}^{r+1}\d s=\|\u_0\|_{\H}^2+\int_0^t\langle\f(s),\v(s)\rangle\d s,
	\end{align} 
	for all $t\in[0,T]$. Let us now show that the weak solution of the problem obtained above is unique. For any given data $\u_0\in\H$ and $\f\in\mathrm{L}^2(0,T;\H)$, let us assume that $\v_1$ and $\v_2$ are two weak solutions of the problem \eqref{eqn-ch1}. Then $\v=\v_1-\v_2$ satisfies the following for all $t\in(0,T)$: 
		\begin{equation}\label{eqn-ch2}
		\left\{
		\begin{aligned}
			\frac{\d\v(t)}{\d t}+\mu \A\v(t)+\B(\u(t),\v(t))+ \beta[\mathcal{C}(\v_1(t))-\mathcal{C}(\v_2(t))]&=\mathbf{0},\\
			\v(0)&=\boldsymbol{0}.
		\end{aligned}
		\right.
	\end{equation}
	Taking the inner product with $\v$ to the first equation in \eqref{eqn-ch2} provides 
	\begin{align*}
		\frac{1}{2}\frac{\d}{\d t}\|\v(t)\|_{\H}^2+\mu\|\nabla\v(t)\|_{\H}^2+\beta\langle\mathcal{C}(\v_1(t))-\mathcal{C}(\v_2(t)),\v(t)\rangle =\boldsymbol{0},
	\end{align*}
	since $\langle\B(\u,\v),\v\rangle=0$. Integrating the above inequality from $0$ to $t$, and then using the monotonicity property of the operator  $\mathrm{C}(\cdot)$ (see \eqref{2.23}), we deduce 
	\begin{align}\label{378}
		\|\v(t)\|_{\H}^2+2\mu\int_0^t\|\nabla\v(s)\|_{\H}^2\d s\leq\|\v(0)\|_{\H}^2=\boldsymbol{0},
	\end{align}
	which immediately gives $\v_1(t)=\v_2(t)$ for all $t\in[0,T]$ in $\H$. 
	
	Our next aim is to show that $\v(t,x)=\u(t,x)$ for all $t\in[0,T]$ and a.e. $x\in\Omega$, where $\u$ is a weak solution of \eqref{kvf} corresponding to the initial data $\u_0$ and forcing $\f$, which is appearing in \eqref{eqn-ch1}. Let us define $\w=\u-\v$ and take the difference between \eqref{kvf} and \eqref{eqn-ch1} to find 
		\begin{equation}\label{eqn-ch3}
		\left\{
		\begin{aligned}
			\frac{\d\w(t)}{\d t}+\mu \A\w(t)+\B(\u(t),\w(t))+ \beta[\mathcal{C}(\u(t))-\mathcal{C}(\v(t))]&=\mathbf{0},\\
			\w(0)&=\boldsymbol{0}.
		\end{aligned}
		\right.
	\end{equation}
	The systems \eqref{eqn-ch2} and \eqref{eqn-ch3} are the same and a calculation similar to \eqref{378} yields $\u(t)=\v(t)$ for all $t\in[0,T]$ in $\H$. The uniqueness of the weak solutions of the problem \eqref{eqn-ch1} yields the uniqueness of weak solutions of the problem \eqref{kvf} also. 
\end{proof}

\begin{remark}\label{rem-con}
	The results obtained in Theorem \ref{main} provides continuous dependence on the initial data (Part (2) of the proof), whereas Theorem \ref{main3} provides uniqueness result only. 
\end{remark}


\begin{remark}\label{rem3.7}
		If the  domain is an $d$-dimensional torus, then one can approximate functions in $\L^p$-spaces using the truncated Fourier expansions in the following way (see \cite[Theorem 1.6, Chapter 1, pp. 27]{JCR4} and \cite[Theorem 5.2]{KWH}). Let $\mathcal{O}=[0,2\pi]^d$ and $\mathcal{Q}_k:=[-k,k]^d\cap\mathbb{Z}^d$. For every $\w\in\L^1(\mathcal{O})$ and every $k\in\mathbb{N}$, we define $R_k(\w):=\sum_{m\in\mathcal{Q}_k}\widehat{\w}_m e^{\iota m\cdot x},$ where the Fourier coefficients $\widehat{\w}$ are given by $\widehat{\w}:=\frac{1}{|\mathcal{O}|}\int_{\mathcal{O}}\w(x)e^{-\iota m\cdot x}\d x$. Then, for every $1<p<\infty$, there exists a constant $C_p$, independent of $k$, such that $$\|R_k\w\|_{\L^p(\mathcal{O})}\leq C_p\|\w\|_{\L^p(\mathcal{O})}, \ \text{ for all } \ \w\in\L^p(\mathcal{O}),$$ and $$\|R_k\w-\w\|_{\L^p(\mathcal{O})}\to 0\ \text{ as }\ k\to\infty.$$
\end{remark}

\section{Strong solutions}\label{sec4}\setcounter{equation}{0}
In this section, we show the existence of a global strong solution to the system \eqref{kvf} in periodic domains. 
Due to technical difficulties, we obtain the regularity of strong solutions in periodic domain in different function spaces under different regularity assumptions on the initial data and forcing.

\subsection{Functional setting for periodic domain}
For $\mathrm{L}>0$, let us consider a $d$-dimensional torus $\mathbb{T}^d=\left(\frac{\R}{\mathrm{L}\mathbb{Z}}\right)^d$, $2\leq d\leq4$.
Let $\C_{\mathrm{p}}^{\infty}(\mathbb{T}^d;\R^d)$ denote the space of all infinitely differentiable  functions $\u$ satisfying periodic boundary conditions $\u(x+\mathrm{L}e_{i},\cdot) = \u(x,\cdot)$, for $x\in \R^d$. \emph{We are not assuming the zero mean condition for the velocity field unlike the case of NSE, since the absorption term $\beta|\u|^{r-1}\u$ does not preserve this property (see \cite{MTT}). Therefore, we cannot use the well-known Poincar\'e inequality and we have to deal with the  full $\H^1$-norm.} The Sobolev space  $\H_{\mathrm{p}}^s(\mathbb{T}^d):=\mathrm{H}_{\mathrm{p}}^s(\mathbb{T}^d;\mathbb{R}^d)$ is the completion of $\C_{\mathrm{p}}^{\infty}(\mathbb{T}^d;\R^d)$  with respect to the $\H^s$-norm and the norm on the space $\H_{\mathrm{p}}^s(\mathbb{T}^d)$ is given by $$\|\u\|_{{\H}^s_{\mathrm{p}}}:=\left(\sum_{0\leq|\boldsymbol\alpha|\leq s}\|\D^{\boldsymbol\alpha}\u\|_{\mathbb{L}^2(\mathbb{T}^d)}^2\right)^{1/2}.$$ 	
It is known from \cite{JCR1} that the Sobolev space of periodic functions $\H_{\mathrm{p}}^s(\mathbb{T}^d)$, for $s\geq0$, can be defined as
$$\H_{\mathrm{f}}^s(\mathbb{T}^d)=\left\{\u:\u=\sum_{k\in\mathbb{Z}^d}\u_{k}\mathrm{e}^{2\pi i k\cdot x /  \mathrm{L}},\ \overline{\u}_{k}=\u_{-k}, \  \|\u\|_{{\H}^s_\mathrm{f}}:=\left(\sum_{k\in\mathbb{Z}^d}(1+|k|^{2s})|\u_{k}|^2\right)^{1/2}<\infty\right\}.$$ We infer from \cite[Proposition 5.38]{JCR1} that the norms $\|\cdot\|_{{\H}^s_{\mathrm{p}}}$ and $\|\cdot\|_{{\H}^s_f}$ are equivalent. Let us define 
\begin{align*} 
	\mathcal{V}:=\{\u\in\C_{\mathrm{p}}^{\infty}(\mathbb{T}^d;\R^d):\nabla\cdot\u=0\}.
\end{align*}
We define the spaces $\H$ and $\widetilde{\L}^{p}$ as the closure of $\mathcal{V}$ in the Lebesgue spaces $\mathrm{L}^2(\mathbb{T}^d;\R^d)$ and $\mathrm{L}^p(\mathbb{T}^d;\R^d)$ for $p\in(2,\infty)$, respectively. We also define the space $\V$ as the closure of $\mathcal{V}$ in the Sobolev space $\mathrm{H}^1(\mathbb{T}^d;\R^d)$. Then, we characterize the spaces $\H$, $\widetilde{\L}^p$ and $\V$ with the norms  $$\|\u\|_{\H}^2:=\int_{\mathbb{T}^d}|\u(x)|^2\d x,\quad \|\u\|_{\widetilde{\L}^p}^p:=\int_{\mathbb{T}^d}|\u(x)|^p\d x\ \text{ and }\ \|\u\|_{\V}^2:=\int_{\mathbb{T}^d}(|\u(x)|^2+|\nabla\u(x)|^2)\d x,$$ respectively.  We define the Stokes operator $\A\u:=-\mathcal{P}\Delta\u=-\Delta\u,\;\u\in\D(\A)=\V\cap{\H}^{2}_\mathrm{p}(\mathbb{T}^d). $ From now onward, we set $\mathcal{O}:=\mathbb{T}^d$.

In  periodic domains or on the whole space $\R^d,$ the operators $\mathcal{P}$ and $-\Delta$ commute, so that we can use \eqref{3} and we have the following result  also (see \cite[Lemma 2.1]{KWH}): 
\begin{align}\label{370}
	0&\leq\int_{\mathcal{O}}|\nabla\u(x)|^2|\u(x)|^{r-1}\d x\leq\int_{\mathcal{O}}|\u(x)|^{r-1}\u(x)\cdot\A\u(x)\d x\nonumber\\&\leq r\int_{\mathcal{O}}|\nabla\u(x)|^2|\u(x)|^{r-1}\d x.
\end{align}
Note that the estimate \eqref{370} is true even in bounded domains (with Dirichlet boundary conditions) if one replaces $\A\u$ with $-\Delta\u$ and \eqref{371}-\eqref{3a71} (see below) holds true in bounded domains as well as on the whole space $\R^d$.
Since we are not assuming zero mean condition, using the Sobolev embedding and  \cite[Lemma 2]{JCRWS}, we infer
\begin{align}
	\|\u\|^{p+q}_{\L^{\frac{(p+q)d}{d-p}}(\mathcal{O})}&=\||\u|^{\frac{p+q}{p}}\|_{\L^{\frac{pd}{p-d}}(\mathcal{O})}^p\leq C\||\u|^{\frac{p+q}{p}}\|_{\mathrm{W}^{1,p}(\mathcal{O})}^p\\&=C\left(\|\nabla(|\u|^{\frac{p+q}{p}})\|_{\L^p(\mathcal{O})}^p+\|\u\|_{\L^{p+q}(\mathcal{O})}^{p+q}\right)\nonumber\\&\leq C\left(\||\u|^{\frac{q}{p}}|\nabla\u|\|_{\L^p(\mathcal{O})}^p+\|\u\|_{\L^{p+q}(\mathcal{O})}^{p+q}\right)\nonumber\\&= C\bigg(\int_{\mathcal{O}}|\nabla\u(x)|^{p}|\u(x)|^q\d x+
	\int_{\mathcal{O}}|\u(x)|^{p+q}\d x\bigg),
\end{align}
for all $\u\in\W^{1,m}_0(\mathcal{O})$  with $m=d(p+q)/(d+q),p<d$. One can handle the case $d=2$ in the following way: Let us take $\u\in\C_{\mathrm{p}}^{\infty}(\mathcal{O})$. Then $|\u|^{p/2}\in\H_{\mathrm{p}}^1(\mathcal{O})$, for all $p\in[2,\infty)$, and from the Sobolev embedding, $\H_{\mathrm{p}}^1(\mathcal{O})\hookrightarrow\L^p(\mathcal{O})$, for all $p\in[2,\infty)$, we find 
\begin{align*}
	\|\u\|^{r+1}_{\L^{p(r+1)}(\mathcal{O})}&=\||\u|^{\frac{r+1}{2}}\|_{\L^{2p}(\mathcal{O})}^2\leq C\||\u|^{\frac{r+1}{2}}\|_{\H_{\mathrm{p}}^1(\mathcal{O})}^2\nonumber\\&\leq C\bigg(\int_{\mathcal{O}}|\nabla|\u(x)|^{\frac{r+1}{2}}|^2\d x+
	\int_{\mathcal{O}}||\u(x)|^{\frac{r+1}{2}}|^2\d x\bigg).
\end{align*}
Buy using elementary calculus identity $\partial_k(|\u|^{r+1})=(r+1)u_k(\partial u_k)|\u|^{r-1}$, where $\partial_k$ denotes the $k$-th partial derivative, we infer that $|\nabla|\u|^{\frac{r+1}{2}}|^2\leq C_r|\u|^{r-1}|\nabla\u|^2$ (see the proof of \cite[Lemma 1]{JCRWS}). Thus, we further have 
\begin{align}\label{3a71}
	\|\u\|^{r+1}_{\L^{p(r+1)}(\mathcal{O})}\leq C\bigg(\int_{\mathcal{O}}|\nabla\u(x)|^2|\u(x)|^{r-1}\d x+ \int_{\mathcal{O}}|\u(x)|^{r+1}\d x\bigg),
\end{align}
for any $p\in[2,\infty)$ and $r\geq1$. Similarly, for $d=3$, by the Sobolev embedding $\H_{\mathrm{p}}^1(\mathcal{O})\hookrightarrow\L^6(\mathcal{O})$, we find
\begin{align}\label{371}
	\|\u\|^{r+1}_{\L^{3(r+1)}(\mathcal{O})}&=\||\u|^{\frac{r+1}{2}}\|_{\L^{6}(\mathcal{O})}^2\leq C\||\u|^{\frac{r+1}{2}}\|_{\H_{\mathrm{p}}^1(\mathcal{O})}^2
	\nonumber\\&\leq C\bigg(\int_{\mathcal{O}}|\nabla\u(x)|^2|\u(x)|^{r-1}\d x+ \int_{\mathcal{O}}|\u(x)|^{r+1}\d x\bigg), 
\end{align}
for all $\u\in\D(\A)$ and for $r\geq1$. For $d=4$, we obtain 
\begin{align}\label{3p71}
	\|\u\|_{\widetilde{\L}^{2(r+1)}}^{r+1}=\||\u|^{\frac{r+1}{2}}\|_{\L^{4}(\mathcal{O})}^2\leq C\bigg(\int_{\mathcal{O}}|\nabla\u(x)|^2|\u(x)|^{r-1}\d x+\int_{\mathcal{O}}|\u(x)|^{r+1}\d x\bigg), 
\end{align}
for all $\u\in\D(\A)$ and $r\geq1$.

 In the definition given below, we fix $p\in[2,\infty)$ for $d=2$, $p=3$ for $d=3$ and $p=2$ for $d=4$.

  \begin{definition}
	We say that $\u(\cdot)$ is a \emph{strong solution} of the system \eqref{kvf} for $r>3$ ($2\leq d\leq 4$) and $r=3$  ($d=3,4$) with $2\beta\mu\geq1$,  corresponding to the initial data $\u_0\in\V$ and forcing $\f\in\mathrm{L}^2(0,T;\H)$ if it satisfies the following:
	\begin{enumerate}
		\item[1.)]  $\u$ is weak solution of \eqref{kvf},
		\item[2.)]  $\u\in\mathrm{L}^{\infty}(0,T;\V)\cap\mathrm{L}^2(0,T;\D(\A))\cap\mathrm{L}^{r+1}(0,T;\widetilde\L^{p(r+1)})$,
		\item[3.)]  \eqref{kvf} holds for a.e. $t\in[0,T]$ in $\H$. 
	\end{enumerate}
\end{definition}
 
The first part of the following result is available in the literature (for instance, see \cite[Theorem 3.1]{KWH}). 
\begin{theorem}[Regualrity]\label{reg}
For $\u_0\in\V$ and $\f\in\mathrm{L}^2(0,T;\H)$, any weak solution $\u(\cdot)$ to the system \eqref{kvf} with $r>3$, obtained in Theorem \ref{main}  satisfies the following regularity: $$\u\in\mathrm{L}^{\infty}(0,T;\V)\cap\mathrm{L}^2(0,T;\D(\A))\cap\mathrm{L}^{r+1}(0,T;\widetilde\L^{p(r+1)}),$$ and hence $\u(\cdot)$ is a \emph{strong solution} to the system \eqref{kvf}. 
For $r=3$, $\u(\cdot)$ is a \emph{strong solution} to the system \eqref{kvf} with $\u\in\mathrm{L}^{\infty}(0,T;\V)\cap\mathrm{L}^2(0,T;\D(\A))\cap\mathrm{L}^{4}(0,T;\widetilde\L^{4p}),$ provided $2\beta\mu \geq 1$.

Furthermore, if $\u_0\in\V\cap\widetilde\L^{r+1}$, then $$\u\in\C([0,T];\V)\cap\mathrm{L}^{\infty}(0,T;\widetilde\L^{r+1})\cap\mathrm{L}^2(0,T;\D(\A))\cap\mathrm{L}^{r+1}(0,T;\widetilde\L^{p(r+1)}).$$
\end{theorem}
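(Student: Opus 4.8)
The plan is to upgrade the weak solution $\u$ furnished by Theorem \ref{main} (which already satisfies the energy estimate \eqref{energy3}) by testing the abstract equation \eqref{kvf} successively against $\A\u$ and against $\mathcal{C}(\u)=\mathcal{P}(|\u|^{r-1}\u)$. All estimates are carried out formally on the Galerkin approximations $\u^n$ --- for which $\A\u^n\in\H_n$, since the $\boldsymbol{w}_k$ are eigenfunctions of $\A$ --- and then passed to the limit. The structural fact that makes everything work, and which is available only in the periodic setting, is that $\mathcal{P}$ and $-\Delta$ commute; this is exactly what puts the identity \eqref{3} and the inequality \eqref{370} at our disposal, and it is what fails in bounded domains.

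First I would take the $\H$-inner product of \eqref{kvf} with $\A\u$. Using $(\partial_t\u,\A\u)=\frac12\frac{\d}{\d t}\|\nabla\u\|_{\H}^2$ together with \eqref{370} for the damping term, this gives
\begin{align*}
\frac12\frac{\d}{\d t}\|\nabla\u\|_{\H}^2+\mu\|\A\u\|_{\H}^2+\beta\int_{\mathcal{O}}|\nabla\u|^2|\u|^{r-1}\d x\leq (\f,\A\u)-(\B(\u),\A\u).
\end{align*}
The forcing term is handled by $(\f,\A\u)\leq\frac{\mu}{4}\|\A\u\|_{\H}^2+\frac1\mu\|\f\|_{\H}^2$, and the convective term by Cauchy--Schwarz and Young as $|(\B(\u),\A\u)|\leq\frac{\mu}{2}\|\A\u\|_{\H}^2+\frac{1}{2\mu}\int_{\mathcal{O}}|\u|^2|\nabla\u|^2\d x$, leaving a nonnegative multiple $\frac{\mu}{4}\|\A\u\|_{\H}^2$. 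The heart of the matter is to absorb $\frac{1}{2\mu}\int|\u|^2|\nabla\u|^2$ into the damping integral $\beta\int|\u|^{r-1}|\nabla\u|^2$. For $r>3$ the pointwise Young inequality $|\u|^2\leq\varepsilon|\u|^{r-1}+C_\varepsilon$ achieves this for any $\beta,\mu>0$ and produces a leftover $C\|\nabla\u\|_{\H}^2$; for the critical case $r=3$ the two integrands coincide and absorption is possible precisely when $\frac{1}{2\mu}\leq\beta$, i.e. $2\beta\mu\geq1$. Grönwall's inequality (Lemma \ref{Gronw}) then yields $\u\in\mathrm{L}^{\infty}(0,T;\V)$ together with $\int_0^T\|\A\u\|_{\H}^2\d t<\infty$ and $\int_0^T\!\!\int_{\mathcal{O}}|\nabla\u|^2|\u|^{r-1}\d x\,\d t<\infty$; the last bound, combined with the weak-solution bound $\u\in\mathrm{L}^{r+1}(0,T;\widetilde\L^{r+1})$, controls $\mathrm{L}^{r+1}(0,T;\widetilde\L^{p(r+1)})$ through the embeddings \eqref{371}, \eqref{3a71}, \eqref{3p71}.

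For the additional regularity when $\u_0\in\V\cap\widetilde\L^{r+1}$, I would instead test \eqref{kvf} with $\mathcal{C}(\u)$. The convective contribution now vanishes, since $(\B(\u),\mathcal{C}(\u))=\int_{\mathcal{O}}(\u\cdot\nabla)\u\cdot\u|\u|^{r-1}\d x=\frac{1}{r+1}\int_{\mathcal{O}}\u\cdot\nabla|\u|^{r+1}\d x=0$ by incompressibility, while the viscous term is nonnegative by \eqref{3}; this leaves
\begin{align*}
\frac{1}{r+1}\frac{\d}{\d t}\|\u\|_{\widetilde\L^{r+1}}^{r+1}+\mu\int_{\mathcal{O}}|\nabla\u|^2|\u|^{r-1}\d x+\beta\|\u\|_{\widetilde\L^{2r}}^{2r}\leq\|\f\|_{\H}\|\u\|_{\widetilde\L^{2r}}^{r}.
\end{align*}
Absorbing the right-hand side into $\beta\|\u\|_{\widetilde\L^{2r}}^{2r}$ by Young's inequality and integrating gives $\u\in\mathrm{L}^{\infty}(0,T;\widetilde\L^{r+1})$, using $\u_0\in\widetilde\L^{r+1}$.

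Finally, the continuity $\u\in\C([0,T];\V)$ follows once $\partial_t\u\in\mathrm{L}^2(0,T;\H)$ is established: indeed $\mu\A\u,\f\in\mathrm{L}^2(0,T;\H)$, the term $\beta\mathcal{C}(\u)$ lies in $\mathrm{L}^2(0,T;\H)$ because $\|\mathcal{C}(\u)\|_{\H}^2=\|\u\|_{\widetilde\L^{2r}}^{2r}$ is integrable by the previous step, and $\B(\u)\in\mathrm{L}^2(0,T;\H)$ by Hölder's inequality and the Sobolev embeddings applied to $\u\in\mathrm{L}^{\infty}(0,T;\V)\cap\mathrm{L}^2(0,T;\D(\A))$; then $\u\in\mathrm{L}^2(0,T;\D(\A))$ with $\partial_t\u\in\mathrm{L}^2(0,T;\H)$ gives $\u\in\C([0,T];\V)$ by the Lions--Magenes lemma. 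I expect the main obstacle to be the absorption of $(\B(\u),\A\u)$ by $\int|\nabla\u|^2|\u|^{r-1}$: this is exactly what confines the argument to periodic domains, where \eqref{3} and \eqref{370} are valid, and which in the critical case $r=3$ forces the algebraic condition $2\beta\mu\geq1$. A secondary technical point is that $\mathcal{C}(\u^n)\notin\H_n$ in general, so the test against $|\u|^{r-1}\u$ cannot be performed directly on the Galerkin system; one circumvents this by carrying out that estimate on the already-regular solution obtained in the first step, or on a suitably mollified approximation.
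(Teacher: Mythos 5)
Your first step is essentially the paper's own argument: the paper also tests \eqref{kvf} with $\A\u$, bounds $|(\B(\u),\A\u)|\leq\frac{\mu}{4}\|\A\u\|_{\H}^2+\frac{1}{\mu}\||\u||\nabla\u|\|_{\H}^2$, absorbs $\int_{\mathcal{O}}|\u|^2|\nabla\u|^2\d x$ into the damping term (via the H\"older--Young estimate \eqref{rg3c}, of which your pointwise Young inequality $|\u|^2\leq\varepsilon|\u|^{r-1}+C_\varepsilon$ is an equivalent variant), treats $r=3$ with the condition $2\beta\mu\geq 1$, applies Gr\"onwall (Lemma \ref{Gronw}), and then invokes the embeddings \eqref{3a71}--\eqref{3p71} for the $\mathrm{L}^{r+1}(0,T;\widetilde{\L}^{p(r+1)})$ bound. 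This part is correct.

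The second step, however, has a genuine gap. You test the projected equation with $\mathcal{C}(\u)=\mathcal{P}(|\u|^{r-1}\u)$ and claim the convective contribution vanishes because $\int_{\mathcal{O}}(\u\cdot\nabla)\u\cdot|\u|^{r-1}\u\,\d x=0$. But the term you actually face is $(\B(\u),\mathcal{C}(\u))=(\mathcal{P}[(\u\cdot\nabla)\u],|\u|^{r-1}\u)$, and since $|\u|^{r-1}\u$ is \emph{not} divergence-free, the Helmholtz projection cannot be dropped: writing $\mathcal{P}[(\u\cdot\nabla)\u]=(\u\cdot\nabla)\u-\nabla\pi$, the leftover is $-(\nabla\pi,|\u|^{r-1}\u)=\int_{\mathcal{O}}\pi\,\u\cdot\nabla(|\u|^{r-1})\,\d x$, which has no reason to vanish even on the torus; equivalently, at the level of \eqref{1} the pressure term $(\nabla p,|\u|^{r-1}\u)$ does not disappear when testing with $|\u|^{r-1}\u$. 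This is exactly the type of obstruction the paper is careful about. For the same reason, your later identity $\|\mathcal{C}(\u)\|_{\H}^2=\|\u\|_{\widetilde{\L}^{2r}}^{2r}$ is only an inequality ``$\leq$'', the projection being a contraction. The gap is repairable within your scheme: estimate $|(\B(\u),\mathcal{C}(\u))|\leq\||\u||\nabla\u|\|_{\H}\|\mathcal{C}(\u)\|_{\H}\leq\frac{\beta}{4}\|\mathcal{C}(\u)\|_{\H}^2+\frac{1}{\beta}\||\u||\nabla\u|\|_{\H}^2$, use that $\||\u||\nabla\u|\|_{\mathrm{L}^2(0,T;\H)}<\infty$ from your first step, and keep $\|\mathcal{C}(\u)\|_{\H}^2$ (rather than $\|\u\|_{\widetilde{\L}^{2r}}^{2r}$) as the dissipation; this yields $\u\in\mathrm{L}^{\infty}(0,T;\widetilde{\L}^{r+1})$ and $\mathcal{C}(\u)\in\mathrm{L}^2(0,T;\H)$, after which your deduction of $\partial_t\u\in\mathrm{L}^2(0,T;\H)$ from the equation and the Lions--Magenes lemma goes through. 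Note that the paper sidesteps the whole issue by testing with $\partial_t\u$ instead: since $\partial_t\u$ is divergence-free, the identity $(\mathcal{C}(\u),\partial_t\u)=\frac{1}{r+1}\frac{\d}{\d t}\|\u\|_{\widetilde{\L}^{r+1}}^{r+1}$ is legitimate, no cancellation of the convective term is needed (it is simply estimated against $\||\u||\nabla\u|\|_{\H}$, which is square-integrable in time by the first step), and $\partial_t\u\in\mathrm{L}^2(0,T;\H)$ is obtained directly rather than read off from the equation.
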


\begin{proof}
	One can refer \cite[Theorem 3.1]{KWH} for a proof of the first part of the theorem. For completeness, we are providing a proof here.

		Taking inner product with $\A\u(\cdot)$ to the first equation in \eqref{kvf}, we find 
	\begin{align}\label{273}
	\frac{1}{2}\frac{\d}{\d t}\|\nabla\u(t)\|_{\H}^2+\mu \|\A\u(t)\|_{\H}^2&=-(\B(\u(t)),\A\u(t))-\beta(\mathcal{C}(\u(t)),\A\u(t))-(\f(t),\A\u(t)),
	\end{align}
	for a.e. $t\in[0,T]$. We estimate the first term on the right hand side of the equality \eqref{273} using H\"older's, and Young's inequalities as 
	\begin{align}\label{275}
	|(\B(\u),\A\u)|&\leq\||\u||\nabla\u|\|_{\H}\|\A\u\|_{\H}\leq\frac{\mu }{4}\|\A\u\|_{\H}^2+\frac{1}{\mu }\||\u||\nabla\u|\|_{\H}^2. 
	\end{align}
	For $r>3$, we  estimate the final term from \eqref{275} using H\"older's and Young's inequalities as 
	\begin{align}\label{rg3c}
&	\int_{\mathcal{O}}|\u(x)|^2|\nabla\u(x)|^2\d x\nonumber\\&=\int_{\mathcal{O}}|\u(x)|^2|\nabla\u(x)|^{\frac{4}{r-1}}|\nabla\u(x)|^{\frac{2(r-3)}{r-1}}\d x\nonumber\\&\leq\left(\int_{\mathcal{O}}|\u(x)|^{r-1}|\nabla\u(x)|^2\d x\right)^{\frac{2}{r-1}}\left(\int_{\mathcal{O}}|\nabla\u(x)|^2\d x\right)^{\frac{r-3}{r-1}}\nonumber\\&\leq\frac{\beta\mu}{2}\left(\int_{\mathcal{O}}|\u(x)|^{r-1}|\nabla\u(x)|^2\d x\right)+ \frac{r-3}{r-1}\left(\frac{4}{\beta\mu(r-1)}\right)^{\frac{2}{r-3}}\left(\int_{\mathcal{O}}|\nabla\u(x)|^2\d x\right).
	\end{align}
	Using Cauchy-Schwarz and Young's inequalities, we estimate $|(\f,\A\u)|$ as 
	\begin{align}\label{276}
	|(\f,\A\u)|\leq\|\f\|_{\H}\|\A\u\|_{\H}\leq\frac{\mu }{4}\|\A\u\|_{\H}^2+\frac{1}{\mu }\|\f\|_{\H}^2.
	\end{align}
	Substituting \eqref{3}, \eqref{275}-\eqref{276} in \eqref{273}, and then integrating from $0$ to $t$, we obtain 
	\begin{align}\label{277}
	&	\|\nabla\u(t)\|_{\H}^2+\mu \int_0^t\|\A\u(s)\|_{\H}^2\d s+\beta\int_0^t\||\u(s)|^{\frac{r-1}{2}}|\nabla\u(s)|\|_{\H}^2\d s\nonumber\\&\leq\|\nabla\u_0\|_{\H}^2+\frac{2}{\mu }\int_0^t\|\f(s)\|_{\H}^2\d s+\varrho^*\int_0^t\|\nabla\u(s)\|_{\H}^2\d s,
	\end{align}
	for all $t\in[0,T]$, where $\varrho^*=\frac{2(r-3)}{\mu (r-1)}\left(\frac{4}{\beta\mu (r-1)}\right)^{\frac{2}{r-3}}$. An application of Gr\"onwall's inequality (see Lemma \ref{Gronw}) gives 
		\begin{align}\label{2pp77}
	&	\|\nabla\u(t)\|_{\H}^2\leq\left\{\|\nabla\u_0\|_{\H}^2+\frac{2}{\mu }\int_0^T\|\f(t)\|_{\H}^2\d t\right\} e^{\varrho^*T},
	\end{align}
for all $t\in[0,T]$. Thus,	it is immediate that  
		\begin{align}\label{2p77}
	&\sup_{t\in[0,T]}	\|\nabla\u(t)\|_{\H}^2+\mu \int_0^T\|\A\u(t)\|_{\H}^2\d t+\beta\int_0^T\||\u(t)|^{\frac{r-1}{2}}|\nabla\u(t)|\|_{\H}^2\d t\nonumber\\&\leq \left\{\|\nabla\u_0\|_{\H}^2+\frac{2}{\mu }\int_0^T\|\f(t)\|_{\H}^2\d t\right\}e^{\varrho^*T}.
	\end{align}
	The fact that $\u\in \mathrm{L}^{r+1}(0,T;\widetilde\L^{p(r+1)})$ can be obtained from \eqref{3a71}-\eqref{3p71}.

	For $r=3$, we estimate $|(\B(\u),\A\u)|$ as 
	\begin{align}\label{291}
	|(\B(\u),\A\u)|&\leq\|(\u\cdot\nabla)\u\|_{\H}\|\A\u\|_{\H}\leq\frac{1}{4\theta}\|\A\u\|_{\H}^2+\theta\||\u||\nabla\u|\|_{\H}^2,
	\end{align}
where $\theta\in(0,1)$. A calculation similar to \eqref{277} gives 
	\begin{align}\label{292}
		&	\|\u(t)\|_{\V}^2+\left(\mu -\frac{1}{2\theta}\right)\int_0^t\|\A\u(s)\|_{\H}^2\d s+2\left(\beta-\theta\right)\int_0^t\||\u(s)||\nabla\u(s)|\|_{\H}^2\d s\nonumber\\&\leq\|\u_0\|_{\V}^2+\frac{1}{\mu }\int_0^t\|\f(s)\|_{\H}^2\d s.
	\end{align} 
	For $2\beta\mu \geq 1$,  it is immediate that 
	\begin{align}\label{2.94}
	&\sup_{t\in[0,T]}	\|\nabla\u(t)\|_{\H}^2+\left(\mu -\frac{1}{2\theta}\right)\int_0^T\|\A\u(t)\|_{\H}^2\d t+\left(\beta-\theta\right)\int_0^T\||\u(t)||\nabla\u(t)|\|_{\H}^2\d t\nonumber\\&\leq \left\{\|\nabla\u_0\|_{\H}^2+\frac{2}{\mu }\int_0^T\|\f(t)\|_{\H}^2\d t\right\}.
	\end{align}

Let us  take $\u_0\in\V\cap\widetilde\L^{r+1}$ and prove that $\u\in\C([0,T];\V)\cap\mathrm{L}^{\infty}(0,T;\widetilde\L^{r+1})$.	Taking the inner product with $\frac{\d\u}{\d t}$ to the first equation in \eqref{kvf}, we obtain 
	\begin{align}\label{295}
	\left\|\frac{\d\u}{\d t}\right\|_{\H}^2+\frac{\mu}{2}\frac{\d}{\d t} \|\nabla\u(t)\|_{\H}^2+\left(\B(\u(t)),\frac{\d\u}{\d t}\right)+ \beta\left(\mathcal{C}(\u(t)),\frac{\d\u}{\d t}\right)= \left(\f(t),\frac{\d\u}{\d t}\right).
	\end{align}
	It can be seen that 
	\begin{align*}
	\left(\mathcal{C}(\u(t)),\frac{\d\u}{\d t}\right) = \left(\mathcal{P}(|\u(t)|^{r-1}\u(t)),\frac{\d\u}{\d t}\right)= \frac{1}{r+1}\frac{\d}{\d t}\|\u(t)\|_{\widetilde\L^{r+1}}^{r+1}. 
	\end{align*}
	We estimate the terms $|(\B(\u),\frac{\d\u}{\d t}|$ and $|(\f,\frac{\d\u}{\d t})|$ using H\"older's and Young's inequalities as
	\begin{align*}
	\left|\left(\B(\u),\frac{\d\u}{\d t}\right)\right| &\leq\|(\u\cdot\nabla)\u\|_{\H}\left\|\frac{\d\u}{\d t}\right\|_{\H}\leq \frac{1}{4}\left\|\frac{\d\u}{\d t}\right\|_{\H}^2+\||\u||\nabla\u|\|_{\H}^2, \\
	\left|\left(\f,\frac{\d\u}{\d t}\right)\right|&\leq \|\f\|_{\H}\left\|\frac{\d\u}{\d t}\right\|_{\H}\leq \frac{1}{4}\left\|\frac{\d\u}{\d t}\right\|_{\H}^2+\|\f\|_{\H}^2. 
	\end{align*}
	From \eqref{295}, we have 
	\begin{align}\label{296}
&	\frac{1}{2}\left\|\frac{\d\u}{\d t}(t)\right\|_{\H}^2+ \frac{\beta}{r+1}\frac{\d}{\d t}\|\u(t)\|_{\widetilde\L^{r+1}}^{r+1}+\frac{\mu }{2}\frac{\d}{\d t}\|\nabla\u(t)\|_{\H}^2\leq\|\f(t)\|_{\H}^2+\||\u||\nabla\u|\|_{\H}^2.
	\end{align}
	Integrating the above inequality from $0$ to $t$, we further have 
	\begin{align}\label{297}
&\frac{\beta}{r+1}	\|\u(t)\|_{\widetilde\L^{r+1}}^{r+1}+\frac{\mu }{2}\|\nabla\u(t)\|_{\H}^2+\frac{1}{2}\int_0^t
\left\|\frac{\d\u}{\d t}(s)\right\|_{\H}^2\d s\nonumber\\&\leq \frac{\beta}{r+1}\|\u_0\|_{\widetilde\L^{r+1}}^{r+1}+\frac{\mu }{2}\|\nabla\u_0\|_{\H}^2+\int_0^t\|\f(s)\|_{\H}^2\d s+\int_0^t\||\u(s)||\nabla\u(s)|\|_{\H}^2\d s\nonumber\\&\leq C\left(\mu,\beta,r,T,\|\u_0\|_{\V},\|\u_0\|_{\widetilde\L^{r+1}},\|\f\|_{\mathrm{L}^2(0,T;\H)}\right),
	\end{align}
	for all $t\in[0,T]$. Note the final term on the right hand side of \eqref{297} is bounded for all $r>3$ (see \eqref{2p77}) and for $r=3$ with $2\beta\mu\geq1$ (see \eqref{2.94}). Thus, we get $\frac{\d\u}{\d t}\in\mathrm{L}^2(0,T;\H)$ along with $\u\in\mathrm{L}^2(0,T;\D(\A))$  and hence by an application of the Lions-Magenes lemma (\cite[Lemma 1.2, pp. 176–177]{Te1}) that $\u\in\C([0,T];\V)$. Moreover, from \eqref{297}, we have $\u\in\mathrm{L}^{\infty}(0,T;\widetilde\L^{r+1})$. 
\end{proof}

\begin{remark}
It can also be seen that 
\begin{align}
\left|\left(\frac{\d\u}{\d t},\v\right)\right|&\leq\mu  |(\A\u,\v)|+|(\B(\u),\v)|+\beta|(\mathcal{C}(\u),\v)|+|(\f,\v)|\nonumber\\&\leq\mu\|\A\u\|_{\H}\|\v\|_{\H}+\||\u||\nabla\u|\|_{\H}\|\v\|_{\H}+\beta\|\u\|_{\wi\L^{2r}}^{r}\|\v\|_{\H},
\end{align}
for any $\v\in\H$. Now, for all $\v\in\mathrm{L}^{r+1}(0,T;\H)$, we find 
\begin{align*}
\int_0^T\left|\left(\frac{\d\u(t)}{\d t},\v(t)\right)\right|\d t&\leq \left\{T^{\frac{r-1}{2(r+1)}}\left[\left(\int_0^T\|\A\u(t)\|_{\H}^2\d t\right)^{1/2}+\left(\int_0^T\||\u(t)||\nabla\u(t)|\|_{\H}^2\d t\right)^{1/2}\right]\right.\nonumber\\&\quad\left.+\beta\left(\int_0^T\|\u(t)\|_{\wi\L^{2r}}^{r+1}\d t\right)^{\frac{r}{r+1}}\right\}\left(\int_0^T\|\v(t)\|_{\H}^{r+1}\d t\right)^{\frac{1}{r+1}}.
\end{align*}
Thus, we obtain $\frac{\d\u}{\d t}\in\mathrm{L}^{\frac{r+1}{r}}(0,T;\H)$ and by Aubin-Lions lemma, we obtain  strongly convergent subsequence of the Galerkin approximated sequence  in $\mathrm{L}^2(0,T;\V)$ and $\C([0,T];\H)$. Note that  for $\u_0\in\V$, we have  $\u\in\C_w([0,T];\V)$ and the map $t\mapsto\|\u(t)\|_{\V}$ is bounded, where $\C_w([0,T];\V)$ denotes the space of functions $\u:[0,T]\to\V$ which are weakly continuous (see \cite[Lemma 8.1, Chapter 3, pp. 275]{JLLEM}). For $3\leq r\leq\frac{2d}{d-2}-1$, we know that $\V\hookrightarrow\wi\L^{r+1}$ and from the second part of Theorem \ref{reg}, we immediately have $\u\in\C([0,T];\V)$. 
\end{remark}


Let us now discuss the existence and uniqueness of strong solutions \eqref{kvf} in the case of periodic domains in a different way. 	We make use of the monotonicity and  demicontinuity properties of the linear and nonlinear operators and an abstract theory developed in \cite{VB1,VB} to get the following result and it holds true for $2\leq d\leq 4$. 
	
	\begin{theorem}\label{thm4.2}
		Let $\f\in\W^{1,1}([0,T];\H)$ and $\u_0\in\V$ be such that $\A\u_0\in\H$. Then, for $r\geq 3$ ($2\beta\mu\geq 1$ for $r=3$), there exists one and only one function $\u:[0,T]\to\V\cap\widetilde\L^{r+1}$ that satisfies: 
		$$\u\in\W^{1,\infty}([0,T];\H),\ \A\u\in\mathrm{L}^{\infty}(0,T;\H),$$ 
		and 
		\begin{equation}\label{holdae}
	\left\{	\begin{aligned}
		\frac{\d\u(t)}{\d t}+\mu\A\u(t)+\B(\u(t))+\beta\mathcal{C}(\u(t))&=\f(t), \ \text{ a.e. }\ t\in[0,T],\\
		\u(0)&=\u_0.
		\end{aligned}
		\right.
		\end{equation}
		Furthermore, $\u(\cdot)$ is everywhere differentiable from the right in $\H$ and 
		\begin{equation}\label{holdfa}
		\frac{\d^+\u(t)}{\d t}+\mu\A\u(t)+\B(\u(t))+\beta\mathcal{C}(\u(t))=\f(t), \ \text{ for all }\ t\in[0,T).
		\end{equation}
	\end{theorem}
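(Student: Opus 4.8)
The plan is to read \eqref{holdae} as an abstract Cauchy problem in the Hilbert space $\H$ driven by a (shifted) maximal monotone operator and then to quote the abstract generation theory of \cite{VB1,VB}. Accordingly, for $r>3$ I set $\A_{\varrho}:=\G(\cdot)+\varrho\I_d$ with $\varrho$ as in \eqref{215}, and for $r=3$ (with $2\beta\mu\geq1$) I set $\A_{\varrho}:=\G(\cdot)$; in both cases I regard $\A_{\varrho}$ as an operator in $\H$ with domain $\D(\A_{\varrho}):=\D(\A)$. The first, routine, task is to check that $\A_{\varrho}$ maps $\D(\A)$ into $\H$: for $2\leq d\leq4$ one has $\D(\A)\hookrightarrow\H^2(\mathcal{O})\hookrightarrow\L^p(\mathcal{O})$ for every $p\in(1,\infty)$, which gives $\mathcal{C}(\u)=\mathcal{P}(|\u|^{r-1}\u)\in\H$, and by H\"older's inequality together with these embeddings $\B(\u)=\mathcal{P}((\u\cdot\nabla)\u)\in\H$, whenever $\u\in\D(\A)$. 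By Theorems \ref{thm2.2} and \ref{thm2.3}, $\A_{\varrho}$ is monotone on $\H$.

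The heart of the argument is to upgrade this to \emph{maximal} monotonicity, for which I verify the range condition $R(\I_d+\A_{\varrho})=\H$. Given $\h\in\H$, I first solve the stationary problem $(1+\varrho)\u+\mu\A\u+\B(\u)+\beta\mathcal{C}(\u)=\h$ (with $\varrho=0$ when $r=3$) in the reflexive Banach space $\V\cap\widetilde\L^{r+1}$, paired with $\V'+\widetilde\L^{\frac{r+1}{r}}$. The associated operator is strictly monotone, demicontinuous by Lemma \ref{lem2.8}, bounded, and coercive, since $\langle(1+\varrho)\u+\G(\u),\u\rangle=(1+\varrho)\|\u\|_{\H}^2+\mu\|\u\|_{\V}^2+\beta\|\u\|_{\widetilde\L^{r+1}}^{r+1}$ (using $\langle\B(\u),\u\rangle=0$). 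The Browder--Minty theorem then furnishes a unique solution $\u\in\V\cap\widetilde\L^{r+1}$.

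The crucial point is the elliptic \emph{regularity} that promotes this solution into $\D(\A)$, and here the periodic setting is indispensable. Because $\mathcal{P}$ and $-\Delta$ commute, the identity \eqref{3} and the inequality \eqref{370} are available, so that $\beta(\mathcal{C}(\u),\A\u)\geq\beta\int_{\mathcal{O}}|\nabla\u|^2|\u|^{r-1}\d x\geq0$. Testing the stationary equation with $\A\u$ (rigorously at the Galerkin level and then passing to the weak limit), using $(\u,\A\u)=\|\u\|_{\V}^2\geq0$, and absorbing $(\B(\u),\A\u)$ through the Young-type splitting of \eqref{rg3c} against this nonnegative term yields a bound on $\|\A\u\|_{\H}$ in terms of $\|\h\|_{\H}$ and the already-controlled $\|\u\|_{\V}$, $\|\u\|_{\widetilde\L^{r+1}}$. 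Hence $\u\in\D(\A)$ and the equation holds in $\H$, establishing $R(\I_d+\A_{\varrho})=\H$ and, with monotonicity, the maximal monotonicity of $\A_{\varrho}$ on $\H$. I expect this regularity step to be the main obstacle, precisely because the corresponding estimate fails in general bounded domains, where $\mathcal{P}$ and $\Delta$ need not commute and \eqref{3} is unavailable.

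With $\A_{\varrho}$ maximal monotone in $\H$, with $\u_0\in\D(\A)$ (indeed $\u_0\in\V$ and $\A\u_0\in\H$ give $\u_0\in\D(\A)$ by Stokes regularity), and with $\f\in\W^{1,1}([0,T];\H)$, the abstract theorem of \cite{VB1,VB} applies to $\frac{\d\u}{\d t}+\A_{\varrho}\u=\f+\varrho\u$: for $r>3$ the term $\varrho\u$ is a globally Lipschitz perturbation of a maximal monotone operator, covered by the same theory, while for $r=3$ it is absent. This produces a unique $\u\in\W^{1,\infty}([0,T];\H)$ with $\u(t)\in\D(\A)\subset\V\cap\widetilde\L^{r+1}$ for every $t$, satisfying \eqref{holdae} for a.e. $t$, and—since $\f$ is continuous (being in $\W^{1,1}$) and $\A_{\varrho}$ is single-valued—everywhere right-differentiable with \eqref{holdfa}. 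Finally, $\A\u\in\mathrm{L}^{\infty}(0,T;\H)$ follows by combining $\u\in\W^{1,\infty}([0,T];\H)$ with the a priori bound \eqref{2p77} of Theorem \ref{reg} and the identity $\mu\A\u=\f-\frac{\d\u}{\d t}-\B(\u)-\beta\mathcal{C}(\u)$, the last two nonlinear terms being bounded in $\H$ along the trajectory by the same embeddings used above.
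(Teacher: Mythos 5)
Your proposal follows the same overall architecture as the paper: both reduce \eqref{holdae} to the abstract theory of \cite{VB1,VB} for quasi-$m$-accretive operators in $\H$, and in both cases the entire difficulty is concentrated in showing that the shifted operator $\G(\cdot)+\varrho\I_d$ (resp.\ $\G(\cdot)$ for $r=3$, $2\beta\mu\geq1$) is maximal monotone in $\H$ with domain exactly $\D(\A)$ --- a step that works only in the periodic setting because of \eqref{3}/\eqref{370}. Where you genuinely diverge is in \emph{how} maximal monotonicity is obtained. The paper first invokes Brezis's result (monotone $+$ hemicontinuous $+$ coercive in the Gelfand triple implies maximal monotone in $\H$ with $\D(\mathcal{F})\supseteq\D(\A)$) and then pins down the domain via the splitting $\mathcal{F}=\mathcal{F}^{1}+\mathcal{F}^{2}$ of \eqref{3.3.2}--\eqref{3.3.33}, the estimates \eqref{3.147}--\eqref{3.3.6}, and the perturbation theorem for nonlinear $m$-accretive operators; you instead verify Minty's range condition $R(\I_d+(\G+\varrho\I_d))=\H$ directly, solving the stationary problem by Browder--Minty in the reflexive space $\V\cap\wi\L^{r+1}$ and upgrading the solution to $\D(\A)$ by a Galerkin elliptic estimate. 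Both routes are legitimate; yours is more self-contained (no perturbation theorem needed), while the paper's route produces the quantitative bounds $\|\A\u\|_{\H}+\|\mathcal{C}(\u)\|_{\H}+\|\B(\u)\|_{\H}\leq C\left(\|\G(\u)+\kappa\u\|_{\H}+\|\u\|_{\H}\right)$, which is precisely what makes the final assertion $\A\u\in\mathrm{L}^{\infty}(0,T;\H)$ immediate.

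That final assertion is where your write-up has a genuine gap: as stated, the last step is circular. To bound $\B(\u(t))$ and $\mathcal{C}(\u(t))$ in $\H$ uniformly in $t$ ``by the same embeddings used above'' you would need a uniform bound on $\|\u(t)\|_{\H^2}$, i.e.\ exactly the bound $\A\u\in\mathrm{L}^{\infty}(0,T;\H)$ you are trying to prove. The a priori estimate \eqref{2p77} only gives $\u\in\mathrm{L}^{\infty}(0,T;\V)\cap\mathrm{L}^{2}(0,T;\D(\A))$, and for $r>3$ (say $d=3$) the embedding $\V\hookrightarrow\L^{6}$ controls neither $\|\mathcal{C}(\u(t))\|_{\H}\leq\|\u(t)\|_{\wi\L^{2r}}^{r}$ nor $\|\B(\u(t))\|_{\H}$. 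The fix is available inside your own scheme: the abstract theory already guarantees $\u(t)\in\D(\A)$ for \emph{every} $t$ and $\frac{\d^{+}\u}{\d t}\in\mathrm{L}^{\infty}(0,T;\H)$, so you may test the identity $\mu\A\u(t)+\B(\u(t))+\beta\mathcal{C}(\u(t))=\f(t)-\frac{\d^{+}\u}{\d t}(t)$ with $\A\u(t)$ pointwise in $t$ (every pairing is now an honest $\H$-inner product), use $(\mathcal{C}(\u),\A\u)\geq\||\u|^{\frac{r-1}{2}}\nabla\u\|_{\H}^{2}\geq0$ from \eqref{370}, and absorb $(\B(\u),\A\u)$ through the splitting \eqref{rg3c}; this yields
\begin{align*}
\|\A\u(t)\|_{\H}^{2}\leq C\left(\left\|\f(t)-\frac{\d^{+}\u}{\d t}(t)\right\|_{\H}^{2}+\|\nabla\u(t)\|_{\H}^{2}\right),
\end{align*}
whose right-hand side lies in $\mathrm{L}^{\infty}(0,T)$ by \eqref{2p77}. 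Alternatively, state your stationary elliptic estimate in the quantitative form $\|\A\u\|_{\H}\leq C(\|\h\|_{\H}+\|\u\|_{\H})$ and apply it with $\h=\f(t)-\frac{\d^{+}\u}{\d t}(t)+(1+\varrho)\u(t)$, which is in substance what the paper achieves through \eqref{3.147}--\eqref{3.3.6}.
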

\begin{proof}
 We first note that $\mathcal{V}\hookrightarrow\V\cap\widetilde{\L}^{r+1}\hookrightarrow\H$ and $\mathcal{V}$ is dense in $\H,\V$ and $\widetilde{\L}^{r+1},$ and hence $\V\cap\widetilde{\L}^{r+1}$ is dense in $\H$. We have the following continuous  embedding also:
$$\V\cap\widetilde{\L}^{r+1}\hookrightarrow\H\equiv\H'\hookrightarrow\V'+\widetilde\L^{\frac{r+1}{r}}.$$ We define equivalent norms on $\V\cap\widetilde\L^{r+1}$ and $\V'+\widetilde\L^{\frac{r+1}{r}}$ as  (see \cite{NAEG})
\begin{align}
\|\u\|_{\V\cap\widetilde\L^{r+1}}=\left(\|\u\|_{\V}^2+\|\u\|_{\widetilde\L^{r+1}}^2\right)^{1/2}\ \text{ and } \ \|\u\|_{\V'+\widetilde\L^{\frac{r+1}{r}}}=\inf_{\u=\v+\w}\left(\|\v\|_{\V'}^2+\|\w\|_{\widetilde\L^{\frac{r+1}{r}}}^2\right)^{1/2}.
\end{align}
With the above norms, using \cite[Theorem 1]{ICAM}, the space $\V\cap\widetilde\L^{r+1}$ becomes uniformly convex and hence $\V'+\widetilde\L^{\frac{r+1}{r}}=(\V\cap\widetilde{\L}^{r+1})'$ is uniformly smooth (\cite[Theorem 2.13, Chapter II, pp. 52]{ICI}). The Milman-Pettis Theorem states that a uniformly convex Banach space is reflexive (see \cite[Theorem 2.9, Chapter II, pp. 49]{ICI}).  Thus,  it is immediate that $(\V\cap\widetilde{\L}^{r+1})''=\V\cap\widetilde{\L}^{r+1}$ and the space $\V\cap\widetilde{\L}^{r+1}$ is a reflexive Banach space. From \cite[Theorem 1.1, Chapter 1, pp. 2]{VB}, we know that any reflexive Banach space $\X$ can be renormed such that $\X$ and $\X'$ become strictly convex. Furthermore, from \cite[Theorem 1.1, Chapter 1, pp. 2]{VB} (see \cite[Theorem 2.9, Chapter III, pp. 108]{ICI}), we infer that if the dual space $\X'$ is strictly convex, then the duality mapping $\mathcal{J}:\X\to\X'$ is single valued and demicontinuous (see \cite[Corollary 1.5, Chapter II, pp. 43]{ICI} also).  With the above equivalent norms, the spaces $\V\cap\widetilde{\L}^{r+1}$ and $\V'+\widetilde\L^{\frac{r+1}{r}}$ become strictly convex and  hence the duality mapping $\mathcal{J}:\V\cap\widetilde{\L}^{r+1}\to\V'+\widetilde\L^{\frac{r+1}{r}}$ defined by
\begin{align*}
\langle\mathcal{J}(\u),\v\rangle =\int_{\mathcal{O}}\nabla\u(x)\cdot\nabla\v(x)\d x+ \|\u\|_{\widetilde\L^{r+1}}^{1-r}\int_{\mathcal{O}}|\u(x)|^{r-1}\u(x)\cdot\v(x)
\d x,
\end{align*}
for all $\u,\v\in\V\cap\widetilde\L^{r+1}$, is single valued. Note that $\mathcal{J}(\u)=-\A\u+\|\u\|_{\widetilde\L^{r+1}}^{1-r}|\u|^{r-1}\u,$ and by using \eqref{22}, we have 
   \begin{align*}
	\|\mathcal{J}(\u)\|_{\V'+\widetilde\L^{\frac{r+1}{r}}}^2&=\inf_{\mathcal{J}(\u)=-\A\u+\|\u\|_{\widetilde\L^{r+1}}^{1-r}|\u|^{r-1}\u}\left(\|\A\u\|_{\V'}^2+\|\u\|_{\widetilde\L^{r+1}}^{2(1-r)}\||\u|^{r-1}\u\|_{\widetilde\L^{\frac{r+1}{r}}}^2\right)\nonumber\\&= \|\u\|_{\V}^2+\|\u\|_{\widetilde\L^{r+1}}^2,
  \end{align*} 
	since $\mathcal{J}(\u)$ is single valued.  Thus, we get $$\langle\mathcal{J}(\u),\u\rangle=\|\u\|_{\V}^2+\|\u\|_{\widetilde\L^{r+1}}^2=\|\u\|_{\V\cap\widetilde\L^{r+1}}^2=\|\mathcal{J}(\u)\|_{\V'+\widetilde\L^{\frac{r+1}{r}}}^2.$$ Thus, one can apply the abstract theory discussed in \cite{VB1,VB}  for our model. Let us now define the operator $\mathscr{A}_{\H}:\H\to\H$ by 
	\begin{align}
		\mathscr{A}_{\H}(\u):=\mu\A\u+\B(\u)+\beta\mathcal{C}(\u), \ \text{ for all } \u\in\D(\mathscr{A}_{\H})=\{\u\in\V\cap\widetilde\L^{r+1}:\A\u\in\H\}.
	\end{align}
From the Theorem \ref{thm2.2}, we know that the operator $\kappa \I_d+\mathscr{A}_{\H}(\cdot)$ is a monotone operator, where $\varrho$ is defined in \eqref{215}. Using Lemma \ref{lem2.8}, it can be easily seen that the operator $\kappa \I_d +\mathscr{A}_{\H}(\cdot): \V\cap\widetilde\L^{r+1}\to \V'+\widetilde\L^{\frac{r+1}{r}}$ is demicontinuous. Let us now establish the coercivity of the operator $\kappa \I_d+\mathscr{A}_{\H}(\cdot)$. 

\vskip 2mm
\noindent
\textbf{Step I:} \textsl{The operator $\kappa \I_d+\mathscr{A}_{\H}(\cdot)$ is coercive.} We consider 
	\begin{align*} 
	\frac{\langle\kappa\u+\mathscr{A}_{\H}(\u),\u\rangle}{\|\u\|_{\V\cap\widetilde\L^{r+1}}}&=\frac{\kappa\|\u\|_{\H}^2+\mu\|\nabla\u\|_{\H}^2+\beta\|\u\|_{\widetilde\L^{r+1}}^{r+1}}{\sqrt{\|\u\|_{\V}^2+\|\u\|_{\widetilde\L^{r+1}}^{2}}}\nonumber\\&\geq\frac{\min\{\kappa,\mu,\beta\}\left(\|\u\|_{\V}^2+\|\u\|_{\widetilde\L^{r+1}}^2-1\right)}{\sqrt{\|\u\|_{\V}^2+\|\u\|_{\widetilde\L^{r+1}}^{2}}},
	\end{align*}
	where we have used the fact that $x^2\leq x^{r+1}+1$, for $x\geq 0$ and $r\geq 1$. Thus, we have 
	\begin{align}
	\lim_{\|\u\|_{\V\cap\widetilde\L^{r+1}}\to\infty}	\frac{\langle\kappa\u+\mathscr{A}_{\H}(\u),\u\rangle}{\|\u\|_{\V\cap\widetilde\L^{r+1}}}=\infty,
	\end{align}
	and hence the operator $\kappa \I_d+\mathscr{A}_{\H}(\cdot)$ is coercive.
	
	\vskip 2mm
	\noindent
	\textbf{Step IV:} \textsl{The operator $\kappa \I_d+\mathscr{A}_{\H}(\cdot)$ $m$-accretive in $\H\times\H$}. Let us define an operator 
	$$\mathcal{F}(\u):=\mu\A\u+\B(\u)+\beta\mathcal{C}(\u)+\kappa\u,$$ where $\D(\mathcal{F})=\{\u\in\V\cap\wi\L^{r+1}:\mu\A\u+\B(\u)+\beta\mathcal{C}(\u)\in\H\}.$ Note that the space $\V\cap\widetilde{\L}^{r+1}$ is reflexive. Since $\mathcal{F}(\cdot)$ is monotone, hemicontinuous and coercive from $\V\cap\widetilde{\L}^{r+1}$ to $\V'+\widetilde{\L}^{\frac{r+1}{r}}$, then by an application of \cite[Example 2.3.7, pp. 26]{OPHB}, we obtain that  $\mathcal{F}$ is maximal monotone in $\H$ with domain $\mathrm{D}(\mathcal{F})\supseteq\mathrm{D}(\mathrm{A}).$ In fact, we shall prove that $\mathcal{F}$ is $m$-accretive for $\kappa$ sufficiently large with $\mathrm{D}(\mathcal{F})=\mathrm{D}(\mathrm{A}).$ Let us consider the operators for some $\delta_1, \delta_2\in(0,1)$ as
	\begin{align}
		\mathcal{F}^{1}(\cdot) &= \mu(1-\delta_1)\mathrm{A}+\beta(1-\delta_2)\mathcal{C}(\cdot),\label{3.3.2}\\
		\mathcal{F}^{2}(\cdot) &= \mu\delta_1\mathrm{A}+\B(\cdot)+\beta\delta_2\mathcal{C}(\cdot)+\kappa\mathrm{I},\label{3.3.33}
	\end{align}
	where $\D(\mathcal{F}^{1})=\{\u\in\V\cap\wi\L^{r+1}:\mathcal{F}^1(\cdot)\in\H\}$ and $\D(\mathcal{F}^{2}) = \{\u\in\V\cap\wi\L^{r+1}:\mathcal{F}^{2}(\cdot)\in\H\}.$
	Taking the inner product with $\u$ in \eqref{3.3.2}, we obtain
	\begin{align*}
		\mu(1-\delta_1)\|\nabla\u\|_{\H}^2+\beta(1-\delta_2)\|\u\|_{\wi\L^{r+1}}^{r+1}\leq(\mathcal{F}^{1}(\u),\u)\leq \|\mathcal{F}^{1}(\u)\|_{\H}\|\u\|_{\H},
	\end{align*}
	so that 
	\begin{align}\label{3.3.5}
		\|\nabla\u\|_{\H}^2\leq\frac{1}{\mu(1-\delta_1)}\|\mathcal{F}^{1}(\u)\|_{\H}\|\u\|_{\H}.
	\end{align}
	Taking the inner product with $\A\u$ in \eqref{3.3.2} and using equality \eqref{3}, we get
	\begin{align*}
		\mu(1-\delta_1)\|\A\u\|_{\H}^{2}+\beta(1-\delta_2) \left[\||\u|^{\frac{r-1}{2}}\nabla\u\|_{\H}^2+\frac{4(r-1)}{(r+1)^2}\||\nabla|\u|^{\frac{r+1}{2}}|\|_{\H}^2\right]=(\mathcal{F}^{1}(\u),\A\u). 
	\end{align*}
	Therefore, we have 
	\begin{align}\label{3.147}
		\|\A\u\|_{\H}\leq\frac{1}{\mu(1-\delta_1)}\|\mathcal{F}^{1}(\u)\|_{\H}\ \text{ which implies }\ \D(\mathcal{F}^{1})\subseteq\D(\A). 
	\end{align}
	Moreover, by using Sobolev embedding $\H^2_{\mathrm{p}}(\mathcal{O})\hookrightarrow\wi\L^{2r}$, we infer  
	\begin{align*}
		\|\mathcal{F}^{1}(\u)\|_{\H}&\leq\mu(1-\delta_1)\|\mathrm{A}\u\|_{\H} +C\beta(1-\delta_2)\|\u\|_{\wi\L^{2r}}^r\nonumber\\&\leq\mu(1-\delta_1)\|\mathrm{A}\u\|_{\H}+C \beta(1-\delta_2)\|\u\|_{\H^2_{\mathrm{p}}(\mathbb{T}^d)}^r\nonumber\\&\leq\mu(1-\delta_1) \|\mathrm{A}\u\|_{\H}+C\beta(1-\delta_2)\|\A\u\|_{\H}^r+C\beta(1-\delta_2)\|\u\|_{\H}^r, 
	\end{align*}
	which gives $\D(\mathcal{F}^{1})\supseteq\D(\A)$ and therefore $\D(\A)=\D(\mathcal{F}^1).$
	Similarly, taking the inner product with $\mathcal{C}(\u)$ in \eqref{3.3.2}, we find
	\begin{align}\label{3.149}
		\|\mathcal{C}(\u)\|_{\H}\leq\frac{1}{\beta(1-\delta_2)}\|\mathcal{F}^{1}(\u)\|_{\H}.
	\end{align}
	For $r>3$, similar to \eqref{rg3c}, we estimate $\|\B(\u)\|_{\H}$ using H\"older's inequality as follows:
	\begin{align}\label{3.3.4}
		\|\B(\u)\|_{\H}^2\leq
		\||\u|^\frac{r-1}{2}\nabla\u\|_{\H}^{\frac{4}{r-1}}\|\nabla\u\|_{\H}^\frac{2(r-3)}{r-1}.  
	\end{align}
	Note that $(\mathcal{C}(\u),\A\u)=\int_{\mathbb{T}^{d}}(-\Delta \u(x))\cdot|\u(x)|^{r-1}\u(x)\d x$. Using the estimate \eqref{3.3.5} and the equality \eqref{3} in \eqref{3.3.4}, we find
	\begin{align*}
		\|\B(\u)\|_{\H}^2\leq\left[(\mathcal{C}(\u),\A\u)\right]^\frac{2}{r-1}\left[\frac{1}{\mu(1-\delta_1)}\|\mathcal{F}^{1}(\u)\|_{\H}\|\u\|_{\H}\right]^\frac{r-3}{r-1}.
	\end{align*}
	Therefore, we estimate $\|\B(\u)\|_{\H}$ as 
	\begin{align}\label{3p36}
		\|\B(\u)\|_{\H}\leq\|\mathcal{C}(\u)\|_{\H}^\frac{1}{r-1}\|\A\u\|_{\H}^\frac{1}{r-1}\left[\frac{1}{\mu(1-\delta_1)}\|\mathcal{F}^{1}(\u)\|_{\H}\|\u\|_{\H}\right]^\frac{r-3}{2(r-1)}.
	\end{align}
	Using the estimates \eqref{3.147}-\eqref{3.149} in \eqref{3p36},  then using Young's inequality, we get
	\begin{align}\label{3.3.6}
		\|\B(\u)\|_{\H}&\leq\left[\frac{\|\mathcal{F}^1(\u)\|_{\H}^2}{\beta\mu(1-\delta_1)(1-\delta_2)}\right]^\frac{1}{r-1}\left[\frac{\|\mathcal{F}^{1}(\u)\|_{\H}\|\u\|_{\H}}{\mu(1-\delta_1)}\right]^\frac{r-3}{2(r-1)}\nonumber\\&=
		\frac{1}{\sqrt{\mu(1-\delta_1)}}\left[\frac{1}{\beta(1-\delta_2)}\right]^\frac{1}{r-1}\|\mathcal{F}^1(\u)\|_{\H}^\frac{r+1}{2(r-1)}\|\u\|_{\H}^\frac{r-3}{2(r-1)}\nonumber\\&\leq \frac{\delta_1}{1-\delta_1}\|\mathcal{F}^{1}(\u)\|_{\H}+C_{\delta_1,\delta_2,\mu,\beta}\|\u\|_{\H}, 
	\end{align}
	where  $C_{\delta_1,\delta_2,\mu,\beta}=\frac{r-3}{2(r-1)}\left(\frac{1-\delta_1}{\mu^{\frac{r-1}{2}}\beta(1-\delta_2)}\right)^{\frac{2}{r-3}}\left(\frac{r+1}{2\delta_1(r-1)}\right)^{\frac{r+1}{r-3}}$.
	Now using the estimates \eqref{3.147}-\eqref{3.149} and \eqref{3.3.6} in \eqref{3.3.33}, we deduce 
	\begin{align*}
		\|\mathcal{F}^{2}(\u)\|_{\H}&\leq\mu\delta_1\|\A\u\|_{\H}+\beta\delta_2 \|\mathcal{C}(\u)\|_{\H}+\|\B(\u)\|_{\H}+\kappa\|\u\|_{\H}\nonumber\\&\leq	
		\left[\frac{2\delta_1}{1-\delta_1}+\frac{\delta_2}{1-\delta_2}\right]\|\mathcal{F}^1(\u)\|_{\H}+(C_{\delta_1,\delta_2,\mu,\beta}+\kappa)\|\u\|_{\H}. 
	\end{align*}
	Let us choose $\delta_1$ and $\delta_2$ in such a way that $\rho=\frac{2\delta_1}{1-\delta_1}+\frac{\delta_2}{1-\delta_2}<1,$ for example, one can choose $\delta_1=\frac{1}{9}$, $\delta_2=\frac{1}{5}$, so that $\rho=\frac{1}{2}.$ Then by the well-known  perturbation theorem for nonlinear $m$-accretive operators (\cite[Chapter II, Theorem 3.5]{VB1}), we conclude that the operator $\mathcal{F}^{1}+\mathcal{F}^{2}$  with the domain $\D(\A)$  is $m$-accretive in $\H$. Since  $\mathcal{F}^{1}+\mathcal{F}^{2}=\mathcal{G}+\kappa\mathrm{I}$ , the operator  $\mathcal{G}+\kappa\mathrm{I}$ is $m$-accretive in $\H$. 

	Finally, from the abstract result \cite[Theorem 1.6 (pp. 216) and 1.8 (pp. 224), Chapter 4]{VB} or \cite[Theorem 2.5, Chapter III, pp. 140]{VB1}, there exists a unique strong solution $\u\in\W^{1,\infty}([0,T];\H)$ with  $\A\u\in\mathrm{L}^{\infty}(0,T;\H),$ such that it satisfies \eqref{holdae} for a.e. $t\in[0,T]$, it is differentiable from right in $\H$ and \eqref{holdfa} holds for all $t\in[0,T)$. 
\end{proof}

\begin{remark}
As discussed in \cite{KT2}, one can obtain the results obtained in the previous Theorem in the following way also.	Let us define 
$\v=\frac{\d\u}{\d t}$. Then differentiating \eqref{kvf} with respect to $t$, we find for a.e. $t\in[0,T]$
	\begin{align}
	\frac{\d\v}{\d t}=-\mu \A\v(t)-\B(\v(t),\u(t))-\B(\u(t),\v(t))-\beta r\widetilde{\mathcal{C}}(\u(t))\v(t)+\frac{\d\f(t)}{\d t}.
	\end{align}
  Taking inner product with $\v(\cdot)$, one gets
	\begin{align}\label{2.96}
&	\frac{1}{2}\frac{\d}{\d t}\|\v(t)\|_{\H}^2+\mu \|\nabla\v(t)\|_{\H}^2+\beta r(\widetilde{\mathcal{C}}(\u(t))\v(t),\v(t))\nonumber\\&=-(\B(\v(t),\u(t)),\v(t))+\left(\frac{\d\f(t)}{\d t},\v(t)\right).
	\end{align}
	Note that, from \eqref{Gaetu}, we have $(\widetilde{\mathcal{C}}(\u)\v,\v)=\||\u|^{\frac{r-1}{2}}|\v|\|_{\H}^2\geq 0$. A calculation similar to \eqref{2.30} gives 
	\begin{align*}
	|(\B(\v,\u),\v)|&\leq\frac{\mu}{2}\|\nabla\v\|_{\H}^2+\frac{\beta r}{2}\||\u|^{\frac{r-1}{2}}|\v|\|_{\H}^2+\frac{r-3}{\mu(r-1)}\left(\frac{2}{\beta r \mu (r-1)}\right)^{\frac{2}{r-3}}\|\v\|_{\H}^2,
	\end{align*}
	for $r> 3$.	 By using the Cauchy-Schwarz inequality, we estimate $\left|\left(\frac{\d\f(t)}{\d t},\v\right)\right|$ as 
	\begin{align*}
\left|\left(\frac{\d\f(t)}{\d t},\v\right)\right|\leq\left\|\frac{\d\f(t)}{\d t} \right\|_{\H} \|\v\|_{\H}.
	\end{align*}
    Utilizing above estimates in \eqref{2.96} to get for a.e. $t\in[0,T]$
    \begin{align}\label{w11}
    		&\frac{1}{2}\frac{\d}{\d t}\|\v(t)\|_{\H}^2+\frac{\mu}{2} \|\nabla\v(t)\|_{\H}^2+\frac{\beta r}{2}\||\u(t)|^{\frac{r-1}{2}}|\v(t)|\|_{\H}^2 \nonumber\\&\leq\left\|\frac{\d\f(t)}{\d t} \right\|_{\H}\|\v(t)\|_{\H} +\frac{r-3}{\mu(r-1)}\left(\frac{2}{\beta r \mu (r-1)}\right)^{\frac{2}{r-3}}\|\v(t)\|_{\H}^2.
      \end{align}
      Therefore, we have for all $t\in[0,T]$ 
      \begin{align}\label{eqn-fin}
      &	\|\v(t)\|_{\H}^2+\mu\int_0^t \|\nabla\v(s)\|_{\H}^2ds+\beta\int_0^t\||\u(s)|^{\frac{r-1}{2}}|\v(s)|\|_{\H}^2ds \nonumber\\&\leq\|\v(0)\|_{\H}^2+2\int_0^t\left\|\frac{\d\f(t)}{\d t} \right\|_{\H}\|\v(s)\|_{\H}\d s+2\widehat{\varrho}\int_0^t\|\v(s)\|_{\H}^2\d s,
      \end{align}
      where $\widehat{\varrho}=\frac{r-3}{\mu(r-1)}\left(\frac{2}{\beta r \mu (r-1)}\right)^{\frac{2}{r-3}}$. 
  On employing the generalized version of Gr\"onwall's inequality (Lemma \ref{lem-non-gro}), we obtain 
   \begin{align}\label{w111}
   	\sup_{t\in[0,T]}\|\v(t)\|_{\H}^2\leq\left\{\|\v(0)\|_{\H}+\int_0^T\left\|\frac{\d\f(t)}{\d t} \right\|_{\H}\d t\right\}^2e^{2{\widehat{\varrho}} T}. 
   \end{align}
   Using \eqref{w111} in \eqref{eqn-fin}, one can deduce that 
   	\begin{align}\label{299}
	&\sup_{t\in[0,T]}\|\v(t)\|_{\H}^2+\mu \int_0^T\|\nabla\v(t)\|_{\H}^2
	\d t+ \beta r\int_0^T\||\u(t)|^{\frac{r-1}{2}}|\v(t)|\|_{\H}^2\d t\nonumber\\&\leq 
	C(\beta,\mu,r,T,\|\v(0)\|_{\H},\|\f\|_{\W^{1,1}([0,T];\H)}).
\end{align}
	Note that 
	\begin{align*}
	\|\v(0)\|_{\H}&=\left\|\frac{\d\u}{\d t}(0)\right\|_{\H}= \|\mu\A\u(0)+\B(\u(0))+\beta\mathcal{C}(\u(0))+\f(0)\|_{\H}\nonumber\\&\leq\mu\|\A\u_0\|_{\H}+\||\u_0||\nabla\u_0|\|_{\H}+\beta\|\u_0\|_{\widetilde\L^{2r}}^r+\|\f(0)\|_{\H}\nonumber\\&\leq C\left(\mu,\beta,r,\|\f(0)\|_{\H},\|\A\u_0\|_{\H}\right)<
	+\infty,
	\end{align*}
whenever $\u_0\in\D(\A)$, since for $2\leq d\leq 4$, $\D(\A)\hookrightarrow\H^2(\mathcal{O})\hookrightarrow\L^p(\mathcal{O}),$ for all $p\in(1,\infty)$. Note that $\f\in\mathrm{W}^{1,1}([0,T];\H)$  implies that $\f\in\C([0,T];\H)$. In the above estimate, we have  used the Gagliardo-Nirenberg inequality also, that is, for $2\leq d\leq 4$, 
\begin{align*}
\||\u_0||\nabla\u_0|\|_{\H}\leq\|\u_0\|_{\widetilde\L^4}\|\nabla\u_0\|_{\widetilde\L^4}\leq C\|\u_0\|_{\widetilde\L^4}\|\u_0\|_{\H^2}^{\frac{4+d}{8}}\|\u_0\|_{\H}^{\frac{4-d}{8}}\leq C\|\u_0\|_{\H^2}^2. 
\end{align*}
	Taking the inner product with $\A\u(\cdot)$ to the first equation in \eqref{kvf}, we find 
\begin{align}\label{2p73}
	\mu \|\A\u(t)\|_{\H}^2&=-\left(\frac{\d\u(t)}{\d t},\A\u(t)\right) -(\B(\u(t)),\A\u(t))-\beta(\mathcal{C}(\u(t)),\A\u(t))-(\f(t),\A\u(t)),
\end{align}
for a.e. $t\in[0,T]$. We estimate the first term on the right hand side of the equality \eqref{2p73} using H\"older's, and Young's inequalities as 
\begin{align}\label{2p75}
	|(\B(\u),\A\u)|&\leq\||\u||\nabla\u|\|_{\H}\|\A\u\|_{\H}\leq\frac{\mu }{2}\|\A\u\|_{\H}^2+\frac{1}{2\mu }\||\u||\nabla\u|\|_{\H}^2. 
\end{align}
For $r>3$, we  estimate the final term from \eqref{2p75} using a calculation  similar to \eqref{rg3c} as 
\begin{align}\label{4p33}
	&	\int_{\mathcal{O}}|\u(x)|^2|\nabla\u(x)|^2\d x
	\nonumber\\&\leq\beta\mu\left(\int_{\mathcal{O}}|\u(x)|^{r-1}|\nabla\u(x)|^2\d x\right)+2\varrho\mu\left(\int_{\mathcal{O}}|\nabla\u(x)|^2\d x \right).
\end{align}
Therefore, substituting \eqref{3}, \eqref{2p75}-\eqref{4p33} in \eqref{2p73},  we obtain 
\begin{align}
	&\frac{\mu}{2}\|\A\u\|_{\H}^2+\frac{\beta}{2}\||\u|^{\frac{r-1}{2}}\nabla\u\|_{\H}^2\nonumber\\&\leq\|\f\|_{\H}\|\A\u\|_{\H}+
	\left\|\frac{\d\u}{\d t}\right\|_{\H}\|\A\u\|_{\H}+ 2\varrho\mu\|\nabla\u\|_{\H}^2\nonumber\\&\leq\frac{\mu}{4}\|\A\u\|_{\H}^2+\frac{1}{\mu}
	\left(\|\f\|_{\H}+\left\|\frac{\d\u}{\d t}\right\|_{\H}\right)^2 +2\varrho\mu\|\nabla\u\|_{\H}^2.  
\end{align}

Thus, finally using the fact that $\u\in\C([0,T];\V)$, $\frac{\d\u}{\d t} \in\mathrm{L}^{\infty}(0,T;\H)$, $\f\in\mathrm{W}^{1,1}([0,T];\H)$, we deduce that $\u\in\mathrm{L}^{\infty}(0,T;\D(\A))$, for any $\u_0\in\D(\A)$. The case of $r=3$ and $2\beta\mu\geq 1$ can be handled in a similar way. 
\end{remark}

 \medskip\noindent
{\bf Acknowledgments:} With great appreciation, the authors would like to thank the anonymous referee for his or her insightful remarks and ideas, which greatly enhanced the quality of the manuscript.   M. T. Mohan would  like to thank the Department of Science and Technology (DST), India for Innovation in Science Pursuit for Inspired Research (INSPIRE) Faculty Award (IFA17-MA110) and Indian Institute of Technology Roorkee, for providing stimulating scientific environment and resources. The author would also like to thank Prof. J. C. Robinson, University of Warwick for useful discussions and providing the crucial reference \cite{CLF}.  The authors would like to thank Dr. Kush Kinra, NOVA University Lisbon, Portugal for useful discussions.

\end{document}